\newcommand{\R}{{\cal{R}}}
\renewcommand{\P}{{\cal{P}}}
\newcommand{\Sb}{{\cal{B}}}
\newcommand{\eps}{{\varepsilon}}
\newcommand{\lp}[1]{\left\|{#1}\right\|}
\newcommand{\pmX}{{X^{\pm 1}}}
\newcommand{\eqb}[1]{\mathop{=}_{#1}}
\newcommand{\cord}[2]{\left[#1\right]_{#2}}
\newcommand{\Comp}[1]{c\left(#1\right)}
  \gdef\Set#1{\left\{\:{\mathcode`\|"8000\let|\SetVert #1}\:\right\}}}
  \gdef\Pres#1{\left\langle\:{\mathcode`\|"8000\let|\SetVert #1}\:\right\rangle}}
\def\SetVert{\egroup\;\middle|\;\bgroup}
\DeclareMathOperator{\Grp}{Grp}
\DeclareMathOperator{\Sgp}{Sgp}
\newtheorem{theorem}{Theorem}
\newtheorem{corollary}[theorem]{Corollary}
\newtheorem{lemma}[theorem]{Lemma}
\newtheorem{proposition}[theorem]{Proposition}
\newtheorem{remark}[theorem]{Remark}
\newtheorem{observation}[theorem]{Observation}
\newtheorem{construction}[theorem]{Construction}
\newtheorem{question}[theorem]{Question}
\theoremstyle{definition}
\newtheorem{definition}[theorem]{Definition}
\title{From small-overlap conditions to automatic semigroups}
\author{Uri Weiss \\ uriw@tx.technion.ac.il}
\begin{document}

\maketitle

\begin{abstract}
We study the connection between small-overlap conditions and automaticity of semigroups. We restrict the discussion to conditions that imply embeddability and under which each relation decomposes into at least seven pieces. For these hyperbolic-like conditions we show how to construct an automatic structure. Furthermore, we show that the naive approach of considering just geodesics fails in our case.
\end{abstract}


\section{Introduction}

Considering semigroups from the combinatorial and geometric point of view is an active research field in recent years. A major theme in this line of thinking is the transfer of ideas from combinatorial and geometric group theory into the language of semigroup theory. For example, the definitions of hyperbolic groups and automatic groups were extended to semigroups; see \cite{CRRT01, DG04, HKOT01, HT05, HT06, Mea07, SS04}.

One source of difficulty comes from the structure of the (right) Cayley graph of the semigroup. In groups the Cayley graph is a homogeneous space and enjoys a natural metric which is known as the word metric. In semigroups this is no longer true; the Cayley graph is not homogeneous and it is not clear how to define a useful metric on it. We will therefore focus on the case where the semigroup is embeddable. In this case, one may use the metric induced from an embedding of a semigroup Cayley graph into a group Cayley graph. However, as we shall see, this alone doesn't allow transfer of results from groups to semigroups.

In \cite{Rem71} the idea of van Kampen diagrams is extended to the case of monoids and semigroups. The author there uses small-overlap conditions to solve the word problem and to prove Adjan's criterion for embeddability. In groups, small-cancellation conditions imply automaticity \cite{GS90,GS91,Wei07} and certainly hyperbolicity implies automaticity. In this work we will consider hyperbolic-like  small-overlap conditions that imply automaticity (but also embeddablity into a group). This will give a partial answer to a question asked in \cite{DG04}. 

Before we can state the main theorem we need some terminology. Let $\P = \Pres{X| L_1=R_1, \ldots, L_n=R_n}$ be a semigroup presentation. The set of relations in $\P$ is called the \emph{defining relations}; the set of words appearing in the defining relations is called the \emph{defining words} and we denote it by $\R = \R(\P) = \Set{L_1,\ldots,L_n} \cup \Set{R_1,\ldots,R_n}$. A piece for $\P$ is a word $P$ such that there are two defining words $W_1$ and $W_2$ which decompose as $W_1=U_1 P U_2$ and $W_2 = V_1 P V_2$, respectively, and either $U_1\neq V_1$ or $U_2\neq V_2$. For a word $W\in X^*$ we denote by $\lp{W}$ the \emph{piece-length} of $W$, namely, the minimal $k$ such that $W=P_1\cdots P_k$ and $P_1,\ldots, P_k$ are pieces (it is zero if $W$ is the empty word and it is $\infty$ if no such decomposition exists). Let $\P = \Pres{X| L_1=R_1, \ldots, L_n=R_n}$ be a semigroup presentation. We say that $\P$ is a $K_3^2$ presentation \cite{Kas92, Gub94} if the following conditions hold:
\begin{enumerate}
 \item[(a)] Each defining relation $L=R$ has the property that $L$ and $R$ both start (respectively, end) with different generators.
 \item[(b)] Each defining word $W$ has a piece-length of at least $3$ (i.e., $\lp{W}\geq 3$).
 \item[(c)] If $R_1=L_1$ and $R_2=L_2$ are two defining relations then all four words $R_1$, $R_2$, $L_2$, and $L_2$ are distinct.
\end{enumerate}
Condition $K_3^2$ implies \cite{Gub94} that the semigroup presented by $\P$ is embeddable. Our main theorem is the following:
\begin{theorem}[Main Theorem] \label{thm:mainThm}
Let $\P = \Pres{X| L_1=R_1, \ldots, L_n=R_n}$ be a semigroup presentation. Assume that the $K_3^2$ condition holds and also:
\begin{enumerate}
 \item[\emph{(\dag)}] Each defining relation $L=R$ has the property that $\lp{R}+\lp{L}\geq7$.
\end{enumerate}
Then, the semigroup presented by $\P$ is automatic.
\end{theorem}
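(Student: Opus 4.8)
The plan is to build an automatic structure $(X,\Ly)$ for the semigroup $S$ presented by $\P$, in the sense of \cite{CRRT01}. Two preliminary remarks organise everything. First, the relevant combinatorial data is finite: pieces are subwords of the finitely many defining words, so $\P$ has only finitely many pieces, and there are only finitely many defining relations. Second, $K_3^2$ already yields an embedding $S\hookrightarrow G$ into a (finitely generated) group \cite{Gub94}, so $\Cay{S}$ sits inside $\Cay{G}$ and inherits a word metric; all fellow-travelling is measured with respect to that metric, and the multiplier automata will be built by tracking ``difference'' elements of $G$. The three ingredients are then: Remmers' van Kampen diagrams over semigroup presentations \cite{Rem71}; an Euler-characteristic analysis of reduced diagrams using $K_3^2$ and (\dag); and the fellow-traveller criterion for semigroup automaticity.

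\textbf{Step 1 (structure of diagrams).} Recall that $u=_S v$ is witnessed by a finite planar diagram $D$ (in Remmers' sense) whose boundary cycle is cut by two vertices into a \emph{top} arc reading $u$ and a \emph{bottom} arc reading $v$, whose $2$-cells carry defining relations, and which is \emph{reduced} (no two adjacent cells cancel). The geometric heart of the argument is a structure theorem: \emph{a reduced diagram both of whose boundary labels are $\Ly$-reduced (Step~2) is a ``thin ladder'' — its $2$-cells can be linearly ordered $f_1,\dots,f_m$ so that $f_j$ meets $f_{j+1}$ along a single edge path, consecutive such paths are disjoint, and $\partial f_j\cap\partial D$ consists of one subarc of the top and one of the bottom, each of piece-length at most a constant $c=c(\P)$.} This is the semigroup counterpart of the thinness of geodesic bigons in $C(6)$-type small-cancellation groups. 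Part~(b) of $K_3^2$ forces every interior edge path of a cell to carry at least one piece; part~(a) forbids immediate folding at the ends of cells; and (\dag) supplies the small-cancellation hypothesis — a cell meeting the boundary, or its neighbours, along too few pieces would force $\lp{L}+\lp{R}<7$ for its own relation — so that a Greendlinger-type lemma, proved by Remmers' curvature count adapted to these two-ended diagrams, excludes interior vertices of positive curvature and therefore all branching, leaving only the ladder.

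\textbf{Step 2 (the language).} I would \emph{not} take $\Ly$ to be the geodesic language: the paper shows by an explicit example that geodesic representatives of neighbouring elements need not fellow-travel, so the geodesic language cannot occur in an automatic structure for $S$. Instead $\Ly$ is defined as the set of words over $X$ avoiding a \emph{finite} list of forbidden factors read off from the defining relations — the configurations removed by a Dehn-type rewriting over $\P$. Finiteness of that list makes $\Ly$ regular, and running the rewriting to termination on an arbitrary word shows $\Ly$ maps onto $S$. The subtlety is that the rewriting must be oriented so as to be \emph{compatible with thinness of diagrams}, not merely to shorten words; consequently $\Ly$ will in general contain some non-geodesics and omit some geodesics, and pinning down exactly the forbidden factors that simultaneously guarantee surjectivity and the $\Ly$-reducedness hypothesis of the Step~1 structure theorem is the delicate point.

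\textbf{Step 3 (fellow-travelling and conclusion).} Given $u,v\in\Ly$ with $\overline{ua}=\overline v$ for some $a\in X$, or $\overline u=\overline v$ (the case $a=\varepsilon$), choose a reduced diagram $D$ for the equality and apply Step~1: $D$ is a thin ladder $f_1,\dots,f_m$. Reading along it, for each $t$ the length-$t$ prefix $u_t$ of $u$ and the length-$t$ prefix $v_t$ of $v$ are separated by at most a bounded number of cell boundaries, each of bounded diameter in $\Cay{G}$ (finitely many relations); after the usual reparametrisation synchronising the two lengths, $d_G(\overline{u_t},\overline{v_t})\le\kappa$ for a constant $\kappa=\kappa(\P)$. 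So $\Ly$ has the fellow-traveller property in $\Cay{G}$, and since all these prefixes name elements of $S$, the standard construction gives, for each $a\in X\cup\{\varepsilon\}$, a two-tape automaton recognising $\{(u,v)\in\Ly\times\Ly : \overline{ua}=\overline v\}$: it reads $u,v$ letter by letter while tracking $\overline{u_t}^{-1}\overline{v_t}\in G$, which stays in the finite $\kappa$-ball of $G$, and accepts when that element equals $\overline a$. Intersecting with the regular set $\Ly\times\Ly$ shows each $\Ly_a$ is regular, so $(X,\Ly)$ is an automatic structure and $S$ is automatic. The main obstacle is to make Steps~1 and~2 mutually consistent — to prove the thin-ladder structure for the inhomogeneous semigroup diagrams \emph{while} choosing $\Ly$ regular, surjective, and with only thin diagrams between its members; since geodesics fail, surjectivity and thinness pull against each other, so the normal form must be calibrated exactly, and that calibration (together with the curvature count converting (\dag) into absence of branching) is where the real work lies.
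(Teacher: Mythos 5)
Your overall architecture is the right one, and it agrees with the paper on the main ideas: use the embedding $S\hookrightarrow G$ and the induced metric on $\Cay{S,X}$ to reduce automaticity to a fellow-traveller statement (the paper's Theorem \ref{thm:autoInEmbdlSG}); prove, via a Greendlinger-type lemma for Remmers diagrams that are $C(7)$ thanks to (\dag), that equality diagrams between well-chosen boundary words are thin ladders (the paper's Proposition \ref{prop:condForThin} and Lemma \ref{lem:essencialVert}); and, crucially, abandon geodesics as the normal form. All of this is in the paper.

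There are, however, two genuine gaps, and they are linked. First, your Step 3 slides past the central obstacle. A thin ladder in a semigroup diagram gives only that corresponding \emph{rungs} are close; it does \emph{not} give synchronous $k$-fellow-travelling of the two boundary words read letter by letter over $X$, because the top and bottom arcs of a single cell can have very different $X$-lengths. That is exactly the phenomenon behind the paper's example $\Pres{a,b,c\mid abcc=cba}$: the equality diagram between $(abc)^n$ and $c(ba)^n$ is a perfectly thin ladder, yet the two words do not synchronously fellow-travel because every rung loses a letter of slack. The phrase ``after the usual reparametrisation synchronising the two lengths'' is where this is being assumed away; but reparametrising is precisely the move that turns a synchronous automatic structure into an asynchronous one, which would give only the weaker result already known from Kambites' $C(4)$ theorem. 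You need a device that keeps the two tapes synchronous \emph{while} absorbing the unequal rung-lengths. The paper's solution is to change the generating set: it passes to an alphabet $\Phi$ with one symbol $\varphi_W$ for each subword $W$ of a defining word, so that each rung of the ladder contributes exactly one $\Phi$-letter to each side. Over $\Phi$ the ladder \emph{does} yield genuine synchronous fellow-travelling (Lemma \ref{lem:WmuAndWsigAre1FT}), and automaticity is generating-set independent here, so this is legitimate. Staying over $X$ is not merely harder; without some substitute for the $\Phi$-trick, your Step 3 as written is false.

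Second, Step 2 is not a proof but a statement of the problem. You acknowledge that ``pinning down exactly the forbidden factors'' is the delicate point and ``where the real work lies.'' That calibration is in fact the content of the paper's Sections \ref{sec:autoInSemigrp}, \ref{sec:vanKampen}, and \ref{sec:proofMainThm}. Concretely, the paper does not describe $\Ly$ by forbidden factors directly; it defines an auxiliary vector $\kappa_A\in\{0,1\}^{|A|}$ on $\Phi$-words (Definition \ref{def:auxVect}) flagging positions where a defining word appears with more pieces than its complement, orders $\Phi^*$ by shortlex on $\kappa$ (the Piefer order), and takes $\Ly$ to be the set of order-minimal words. Regularity of $\Ly$ is \emph{not} obvious from this description and is obtained indirectly via the ``falsification by fellow travellers'' principle (Theorem \ref{thm:falseification}): it suffices to show every non-minimal word is refuted by a bounded fellow-traveller (condition (R)) and that minimal words fellow-travel (condition (FT)). Establishing (R) for inefficient words is where most of the technical work goes (Proposition \ref{prop:inefficientCanRefute}, proved by induction through the ``pacing pair'' machinery and the explicit ``fixing at location $\ell$'' construction), and this induction is carried out precisely in the coordinates provided by $\Phi$ and $\kappa$. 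None of this is derivable from the thin-ladder picture alone; the order has to be designed so that swapping a defining word for its complement \emph{decreases} $\kappa$ while moving a bounded $d_\Phi$-distance, and (\dag) is used again here to guarantee that the swap is well-defined (Lemma \ref{lem:uniquenessOfComp}) and strictly improves the vector. So what your proposal identifies as ``the delicate point'' is, in the paper, a substantial inductive argument built on an enlarged alphabet and an order-minimality criterion, neither of which appears in the sketch. A correct completion of your plan would essentially have to rediscover both.

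One smaller inaccuracy: you state the thin-ladder conclusion for ``a reduced diagram both of whose boundary labels are $\Ly$-reduced'' as if thinness followed directly. The paper's Greendlinger dichotomy (Proposition \ref{prop:condForThin}) says the diagram is thin \emph{or} contains a boundary region with at most three inner neighbours; ruling out the second alternative is precisely what the efficiency condition (via Corollary \ref{cor:bigPartToNonZeroKappa}) is designed to do. That is another place where the ``forbidden factors'' would need to be calibrated against the combinatorics of Greendlinger regions, again pulling you toward the $\kappa$-vector.
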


Recently, and independently, Mark Kambites \cite{Kam09} has shown that semigroups for which each defining word has a piece-length at least four, also known as $C(4)$ semigroups, are asynchronously-automatic. This is a weaker notion than automaticity. However, the $C(4)$ small-overlap assumption is weaker than the assumptions in the main theorem since it does not imply that the semigroup is embeddable. The following is a natural question:

\begin{question}
Can one show that the $C(4)$ small-overlap condition implies that the semigroup is automatic? (or find a counter example.)
\end{question}  

$C(7)$ groups (in which each relator decomposes to at least seven pieces) are hyperbolic. Now, in hyperbolic groups one may construct an automatic structure by considering the set of geodesics. We give an example (Section \ref{sec:example}) showing that even in our restricted setup one cannot use the set of geodesics as an automatic structure.

Automatic semigroups (or monoids) are defined using language theoretic notions (see definition \ref{def:autoSG}). There are `geometric characterizations' \cite{HT06,SS04} which are not as simple (or nice) as in the group case. We show, however, that for embeddable semigroups one can use the geometric characterization (known as fellow-traveller property). This can be done by considering the metric on the Cayley graph which is induced from the embedding of the semigroup.

For semigroups, the conjugacy problem is the problem of deciding if for two elements $A$ and $B$ there are other elements $U$ and $W$ such that $AU=UB$ and $WA=BW$ (equality of elements in the semigroup). For bi-automatic groups this problem is decidable \cite[Thm. 2.5.7]{EPS92}. A straightforward generalization of the proof of the main theorem shows that in fact the semigroups considered in the main theorem are \emph{bi-automatic} (this is done in \cite{Wei10}). Thus, using the same proof as the one in \cite{EPS92} we get that the conjugacy problem for these semigroups is decidable. Note however that the complexity of the solution is doubly exponential.

The rest of this paper is organized as follows. In section \ref{sec:preliminaries} we give the basic definition and notations. In Section \ref{sec:autoInSemigrp} we give the characterization of automaticity for embeddable semigroups and we show how to prove automaticity using a special order on the elements of the semigroup. In section \ref{sec:example} we give an example of an embeddable semigroup for which the conditions of the main theorem hold but the set of geodesics is not an automatic structure. In Section \ref{sec:vanKampen} we recall the parts of van Kampen diagram theory we need for the proof. Finally, in Section \ref{sec:proofMainThm} we prove the main theorem.

This work is part of the author's Ph.D. research conducted under the supervision of Professor Arye Juh\a'{a}sz.

\section{Preliminaries} \label{sec:preliminaries}

The following notations and definitions are based on \cite{CRRT01}. Let $S$ be a semigroup finitely generated by $X$. We denote by $X^+$ the set of non-empty words with letters in $X$, i.e., this is the free semigroup over $X$. We denote by $\eps$ the empty word and by $X^*$ the set $X^+\cup\Set{\eps}$, which is the free monoid over $X$. Given a word $W$ in $X^+$ we denote by $\overline{W}$ the element that $W$ presents and denote by $\pi_{X,S}:X^+\to S$ the natural map such that $\pi_{X,S}(W)=\overline{W}$ for all $W\in X^+$. For the purpose of this work it is enough to consider only semigroups with $1$ (i.e., monoids). We will adopt this convention in the sequel and assume that the natural map $\pi$ send $\eps$ to $1$. We denote the length of $W$ by $|W|$. We say that $U$ is a \emph{subword} of $W$ if $W$ has a decomposition $W = V_1 U V_2$; $U$ is a \emph{prefix} of $W$ if $V_1 = \eps$ and it is a \emph{suffix} of $W$ if $V_2 = \eps$. If $W$ and $V$ in $X^*$ present the same element in $S$ (i.e., $\pi_{X,S}(W)=\pi_{X,S}(U)$) than we may emphasis that the equality is in $S$ by writing $W\eqb{S}U$. If $S$ and $X$ are understood from the context we may also simply write $\pi$ instead of $\pi_{X,S}$. When needed, we will distinguish between semigroup presentations and group presentation using the notations $\Sgp\Pres{\cdot|\cdot}$ and $\Grp\Pres{\cdot|\cdot}$, respectively. Suppose $\$\not\in X$. We denote by $X(2,\$)$ the set $((X\cup\Set{\$})\times (X\cup\Set{\$})) \setminus \Set{(\$,\$)}$. 

\begin{definition} \label{def:deltaX}
Let the map $\delta_X:X^*\times X^* \to X(2,\$)^*$ be defined on
$(W,U)$, for $W=x_1\,x_2\,\cdots\,x_n$ and $U=y_1\,y_2\,\cdots
y_m$ as follows:
\[
\delta_X(W,U) = \left\{
\begin{array}{ll}
  (x_1,y_1)\cdots(x_n,y_n)(\$,y_{n+1})\ldots(\$,y_{m}) & n<m \\
  (x_1,y_1)\cdots(x_m,y_m)(x_{m+1},\$)\ldots(x_{n},\$) & m<n \\
  (x_1,y_1)\cdots(x_n,y_m) & m=n
\end{array}
\right.
\]
\end{definition}

\begin{definition}\label{def:autoSG}
A finitely generated semigroup $S$ is \emph{automatic} if there is
a generating set $X$ and regular language $L\subseteq X^*$ such
that:
\begin{enumerate}
 \item $L$ is onto $S$ through the natural map.
 \item For any $x\in X\cup\Set{\eps}$ the following set
 is regular:
 \[
 L_{x}=\Set{(W,U)\delta_X|W,U\in L;\; \overline{Wx}=\overline{U}}
 \]
\end{enumerate}
A language $L$ having these properties is called an
\emph{automatic structure} of $S$.
\end{definition}

We will be using \emph{shortlex} ordering in several places. Suppose we are given two vectors $v=(a_1,\ldots,a_n)$ and $u=(b_1,\ldots,b_m)$ with entries in some set $A$. If we wish to compare between $u$ and $v$ using a shortlex order then there will be some (complete) order on $A$ and, based on that order, $u$ precedes $v$ if:
\begin{enumerate}
	\item $n<m$; or,
	\item $n=m$ and there is some index $1\leq k \leq n$ such that $a_i=b_i$ for all $1\leq i \leq k-1$ and $a_k < b_k$.
\end{enumerate}
In some cases there may be a natural order defined on $A$ (e.g., if $A$ consists of natural numbers). If that is the case then the shortlex ordering (on vectors with entries in $A$) will be based on the natural order on $A$. We may also use the shortlex ordering to compare between words in $X^*$. In this case there will be a fixed (arbitrary) order on $X$ and we would consider the elements of $X^*$ as vectors with entries in $X$. The shortlex ordering is denoted by `$<_{\text{lex}}$'.

Suppose we are given a semigroup $S$ and a (finite) generating set $X$. The Cayley graph $\Gamma(S,X)$ of $S$ under the generating set $X$ (often abbreviated as $\Gamma$ if $S$ and $X$ are understood from the context) is the graph with $S$ as the vertex set and an edge $s \stackrel{x}{\rightarrow} sx$ for any $s\in S$ and $x\in X$. Each word $W=x_1 x_2\cdots x_n$ in $X^*$ represents a path in $\Gamma$ which has the following vertices:
\[
1, \overline{x_1}, \overline{x_1 x_2}, \ldots, \overline{x_1 x_2\cdots x_n}
\]
There are several ways to define a metric on $\Gamma$. One option is to consider the distance from $s_1$ to $s_2$ as the length of shortest positive path connecting between them. Another option is to consider the path metric on $\Gamma$ viewed as non-directed graph. These two options have their advantages and limitations. Here we consider another option which is only available when the semigroup is embeddable. For that end, consider a semigroup $S$ finitely generated by $X$ with a semigroup presentation
\[
\Sgp\Pres{X | R_1=S_1,\ldots, R_n=S_n}
\]
Then, the co-presented group of $S$ is the group $G$ with presentation
\[
\Grp\Pres{X | R_1=S_1,\ldots, R_n=S_n}
\]
If $S$ is embeddable then it is embeddable in the co-presented group as the sub-semigroup of positive words (positive words are words in $X^*$ and negative words are words in $(X^{-1})^*$). Consequently, $\Gamma(S,X)$ is embedded in $\Gamma(G,\pmX)$ and we can define a metric on $\Gamma(S,X)$ which is induced from the word metric on $\Gamma(G,\pmX)$. We term this metric as the \emph{induced metric} on $\Gamma(S,X)$.

We conclude the preliminary section with few important (but easy) consequences of the definition of a piece. Suppose $W$ is a subword of a defining word such that $\lp{W}\geq2$. If follows that $W$ fixes two \emph{unique} elements $U_1$ and $U_2$ such that $U_1 W U_2 \in \R$ (it is possible that $U_1 = \eps$ or $U_2 = \eps$). The reason is that non-uniqueness would imply, by the definition of pieces, that $W$ is a piece and consequently $\lp{W}=1$. So we have:

\begin{observation} \label{obs:twoPiecUniq}
Let $W$ be subword of a defining word such that $\lp{W}\geq2$. Then, there is a \emph{unique} $R \in \R$ such that $W$ is a subword of $R$.
\end{observation}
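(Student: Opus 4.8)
The plan is to argue by contradiction directly from the definition of a piece. Suppose $W$ were a subword of two defining words $R, R' \in \R$, and fix decompositions $R = U_1 W U_2$ and $R' = V_1 W V_2$. The key point is that if $R$ and $R'$ are distinct as words then one cannot simultaneously have $U_1 = V_1$ and $U_2 = V_2$, since this would force $R = U_1 W U_2 = V_1 W V_2 = R'$. Hence the pair $(R, R')$, together with these decompositions, witnesses that $W$ is a piece according to the definition, so $\lp{W} = 1$, contradicting the hypothesis $\lp{W} \geq 2$.

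The only subtlety I would need to address is the possibility that $R = R'$ as words but $W$ occurs inside $R$ at two different positions; this is precisely the case $U_1 \neq V_1$ (equivalently $U_2 \neq V_2$). Here the definition of a piece still applies, taking $W_1 = W_2 = R$ but with the two differing decompositions, so again $W$ is a piece and $\lp{W} = 1$. Combining the two cases, the defining word $R$ containing $W$ is unique, and moreover the occurrence of $W$ inside $R$ is unique — which is the slightly stronger statement ("unique $U_1$ and $U_2$") used informally in the paragraph preceding the observation, and from which the observation is immediate.

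I do not anticipate any genuine obstacle: the statement is an essentially immediate unwinding of the definitions of a piece and of $\lp{\cdot}$ (a word of piece-length at least $2$ is in particular not itself a piece, since otherwise it would have piece-length $1$). If anything, the only care required is bookkeeping — checking that the two scenarios of "distinct occurrences" (distinct defining words, versus the same word with a shifted position) are both subsumed under the single disjunction "$U_1 \neq V_1$ or $U_2 \neq V_2$" in the definition of a piece.
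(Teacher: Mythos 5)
Your proof is correct and matches the paper's own (very brief) argument: the paper's preceding paragraph observes that non-uniqueness of the decomposition $U_1 W U_2 \in \R$ would, by the definition of a piece, force $\lp{W}=1$, and you unwind exactly this, carefully splitting into the two subcases (distinct defining words versus two occurrences in the same word), both of which are subsumed under the disjunction $U_1\neq V_1$ or $U_2\neq V_2$ in the definition of a piece.
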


Here is another observation that follows from the definition of a piece and condition $K_3^2$.

\begin{observation} \label{obs:sharedPieceLP1}
Assume that condition $K_3^2$ holds and suppose $W=W' W''$ is a subword of a defining word where $W' \neq \eps$ and $W'' \neq \eps$. If $W''$ is a prefix of some defining word then $\lp{W''} = 1$. Similarly, if $W'$ is a suffix of some defining word then $\lp{W'} = 1$.
\end{observation}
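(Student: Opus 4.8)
The plan is to prove the observation by showing directly that $W''$ is \emph{itself} a piece; once that is established the conclusion $\lp{W''}=1$ is immediate, since $\lp{W''}\geq1$ (because $W''\neq\eps$) while $\lp{W''}\leq1$ (because a single piece already constitutes a piece-decomposition of length one). So the whole task reduces to exhibiting $W''$ as a common subword of two occurrences inside defining words that are \emph{essentially different}, i.e.\ occurrences with distinct left contexts or distinct right contexts, which is exactly what the definition of a piece requires.

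First I would fix notation. By hypothesis there is a defining word $Q\in\R$ containing $W=W'W''$ as a subword, so $Q=A\,W'\,W''\,B$ for some (possibly empty) $A,B$, and there is a defining word $R\in\R$ of which $W''$ is a prefix, so $R=W''\,C$ for some (possibly empty) $C$. The key asymmetry I will exploit is this: inside $Q$ the distinguished occurrence of $W''$ has left context $A\,W'$, which is \emph{non-empty} because $W'\neq\eps$, whereas as a prefix of $R$ the word $W''$ has the empty left context.

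Next comes a short case split. If $Q\neq R$, then $W''$ occurs in two distinct defining words and is therefore a piece (formally, take $W_1=Q$ with $U_1=AW'$, $U_2=B$, and $W_2=R$ with $V_1=\eps$, $V_2=C$; here one even has $U_1\neq V_1$). If $Q=R$, then $W''$ occurs inside the single defining word $Q=R$ in two different positions --- once with left context $AW'\neq\eps$, once with empty left context --- so taking $W_1=W_2=Q$, $(U_1,U_2)=(AW',B)$, $(V_1,V_2)=(\eps,C)$ we again get $U_1\neq V_1$, hence $W''$ is a piece. In both cases $\lp{W''}=1$. The dual statement, that $\lp{W'}=1$ when $W'$ is a suffix of some defining word, is the left--right mirror of this argument (equivalently, apply the above to the reversed presentation): there the relevant asymmetry is that the right context $W''\,B$ of $W'$ inside $Q$ is non-empty because $W''\neq\eps$, whereas $W'$ as a suffix has the empty right context.

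I do not anticipate a genuine obstacle. The one point worth stating carefully is the degenerate scenario in which the two occurrences of $W''$ would coincide; this is excluded precisely by the hypothesis $W'\neq\eps$ (respectively $W''\neq\eps$ in the mirrored statement), which forces the left (respectively right) contexts to differ. Note that this argument uses only these non-emptiness hypotheses together with the definition of a piece; the hypothesis $K_3^2$ is stated because it is in force throughout the section, but it is not actually invoked here.
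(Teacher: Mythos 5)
Your proof is correct, but it takes a genuinely different route from the paper's. The paper fixes the same two defining words $V_1WV_2 = V_1W'W''V_2$ and $W''U$ and argues about \emph{right} contexts: it shows $V_2\neq U$ by noting that $V_2 = U$ would make the defining word $W''U$ a proper subword of the defining word $V_1WV_2$ (proper because $V_1W'\neq\eps$), forcing $\lp{W''U}=1$ and contradicting condition (b) of $K_3^2$; having differing right contexts, $W''$ is a piece. You instead argue about \emph{left} contexts: the left context $V_1W'$ (your $AW'$) of $W''$ inside $V_1WV_2$ is non-empty because $W'\neq\eps$, while $W''$ as a prefix of the other defining word has empty left context, so the two decompositions already satisfy $U_1\neq V_1$ and $W''$ is a piece. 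Your version is more economical: it appeals only to the definition of a piece and the non-emptiness of $W'$, and your concluding remark is accurate --- the $K_3^2$ hypothesis is listed in the statement but is not actually needed for this observation, whereas the paper's choice of argument does invoke it. Your case split on $Q=R$ versus $Q\neq R$ is harmless but superfluous, since (as you note) the decomposition you exhibit has $U_1\neq V_1$ in either case, and the paper's definition of a piece compares decompositions, not the defining words themselves. The mirror argument for $W'$ a suffix is the obvious left--right dual and is handled the same way.
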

\begin{proof}
We prove the first case. Take $V_1,V_2$ and $U$ such that $V_1 W V_2$ is a defining word and $W'' U$ is a defining word. Clearly $V_2 \neq U$ (because, otherwise we would get that $W'' U$, a defining word, is a subword of $V_1 W V_2$, another defining word, and thus $\lp{W'' U}=1$ which contradicts the $K_3^2$ condition). Hence, by the definition of a piece we get that $W''$ is a piece and thus $\lp{W''}=1$.
\end{proof}

\section{Automaticity in Embeddable semigroups} \label{sec:autoInSemigrp}

Automatic groups have a so-called geometric characterization through the idea of fellow-travelling paths (see \cite[Ch. 2]{EPS92} and Definition \ref{def:KFT} below). For semigroups and monoids such a simple geometric characterization does not apply. However, Hoffmann and Thomas \cite{HT06}, and Silva and Steinberg \cite{SS04} independently gave similar---though less elegant---geometric characterizations for semigroups and monoids; in their work additional conditions are needed on top of fellow-travelling. For embeddable semigroups a group-like geometric characterization can be given (Theorem \ref{thm:autoInEmbdlSG}). First, here is the definition of fellow-travellers in semigroups:

\begin{definition}[Fellow-Travellers \cite{EPS92}]\label{def:KFT}
Let $S$ be a semigroup finitely generated by $X$ and let $d(\cdot,\cdot)$ be some metric on the Cayley graph $\Gamma$. For a word $W\in X^*$ we denote by $W(n)$ the prefix of length $n$ of $W$ (which is $W$ if $n\geq|W|$, the length of $W$). Two words $W$ and $U$ in $X^*$ are called \emph{$k$-fellow-travellers} (relative to $d$) if for any $n\in\mathbb{N}$:
\[
d(\overline{W(n)},\overline{U(n)}) \leq k
\]
A set of words $L\subseteq X^*$ has the fellow-traveller property if there is some constant $k$ such that for each $W$ and $U$ in $L$ such that $d(\overline{W},\overline{U})\leq1$ we have that $W$ and $U$ are $k$-fellow-travellers. In the sequel we will write $d(W,U)$ instead of $d(\overline{W},\overline{U})$.
\end{definition}

Here is a useful feature of the fellow-travelling property.

\begin{lemma} \label{lem:twoKFTs}
Suppose $W$ and $U$ are $k$-fellow-travellers and also $U$ and $V$ are $\ell$-fellow-travellers (with respect to some metric $d$) then $W$ and $V$ are $(k+\ell)$-fellow-travellers.
\end{lemma}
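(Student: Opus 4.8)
The plan is to reduce the statement to the triangle inequality for the metric $d$, applied pointwise along corresponding prefixes. Fix an arbitrary $n\in\mathbb{N}$. By Definition \ref{def:KFT}, the hypothesis that $W$ and $U$ are $k$-fellow-travellers gives $d(\overline{W(n)},\overline{U(n)})\leq k$, and the hypothesis that $U$ and $V$ are $\ell$-fellow-travellers gives $d(\overline{U(n)},\overline{V(n)})\leq \ell$. Since $d$ is a metric on $\Gamma$, applying the triangle inequality at the three vertices $\overline{W(n)}$, $\overline{U(n)}$, $\overline{V(n)}$ yields
\[
d(\overline{W(n)},\overline{V(n)})\;\leq\; d(\overline{W(n)},\overline{U(n)}) + d(\overline{U(n)},\overline{V(n)})\;\leq\; k+\ell.
\]
As $n\in\mathbb{N}$ was arbitrary, this is precisely the assertion that $W$ and $V$ are $(k+\ell)$-fellow-travellers.

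There is essentially no obstacle here: the only ingredient is the triangle inequality for $d$, and the prefix operation $W(n)$ is indexed uniformly in $n$, so no case analysis on the relative lengths of $W$, $U$, and $V$ is required. The one point worth making explicit is that all three words are compared against prefixes of the \emph{same} length $n$, which is why the same index $n$ may be substituted into both hypotheses simultaneously and the pointwise triangle inequality applies. An obvious induction then extends the statement to any finite chain of fellow-travellers, though only the two-step version is needed in the sequel.
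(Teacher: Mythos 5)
Your proof is correct and follows exactly the paper's argument: apply the triangle inequality for $d$ pointwise to the length-$n$ prefixes, then observe $n$ was arbitrary. The additional commentary you added is accurate but unnecessary.
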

\begin{proof}
This follows from:
\[
d(W(i),V(i)) \leq d(W(i),U(i)) + d(U(i),V(i)) \leq k + \ell
\]
\end{proof}

Next we give the characterization of automaticity in embeddable semigroups. The following theorem seems to be folklore; we give its proof for completeness.

\begin{theorem} \label{thm:autoInEmbdlSG}
Let $S$ be an embeddable semigroup, finitely generated by $X$. Then, $S$ is automatic if and only if there is a regular language $L\subseteq X^*$ such that $\pi_{X,S}(L)=S$ and $L$ has the fellow-traveller property under the induced metric on the Cayley graph $\Gamma$.
\end{theorem}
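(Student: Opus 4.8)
The plan is to prove the two implications separately, in both cases via the classical word‑difference machinery, with embeddability entering through one initial observation: since $S$ embeds in the co‑presented group $G$ as the positive words, for $s,t\in S$ the induced distance $d(s,t)$ is exactly the distance between $s$ and $t$ in $\Gamma(G,\pmX)$, i.e.\ the length of $s^{-1}t$ as a word over $\pmX$; in particular, for every fixed $k$ the set of elements of $G$ of length $\le k$ is finite.

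For the \emph{if} direction I would take the given language $L$ itself as the automatic structure and show that each $L_x$ ($x\in X\cup\{\eps\}$) is regular. The point is that if $(W,U)\delta_X\in L_x$ then $\overline{Wx}\eqb{S}\overline U$, so $d(W,U)\le1$ and hence, by the fellow‑traveller hypothesis, $W$ and $U$ are $k$‑fellow‑travellers; thus every word difference $g_n:=\overline{W(n)}^{-1}\,\overline{U(n)}$ stays in the finite ball of radius $k$ in $\Gamma(G,\pmX)$. I would then build a finite automaton whose state records a triple: the state of a DFA for $L$ run on the first coordinate (advanced only while that coordinate is not $\$$), the state of a DFA for $L$ run on the second coordinate, and the current word difference $g$, updated on a letter $(a,b)$ by $g\mapsto\overline a^{-1}g\overline b$ (reading $\$$ as $1$) and sent to a dead state as soon as $g$ leaves the ball; the automaton additionally rejects inputs not in the image of $\delta_X$ (a regular condition), and accepts exactly when both $L$‑components are accepting and $g=\overline x$. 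A routine check of the two inclusions—using for one of them that $Wx$ and $U$ are positive words, hence equal in $G$ iff equal in $S$—shows this automaton recognises $L_x$; since $L$ is regular and onto, $L$ is then an automatic structure and $S$ is automatic.

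For the \emph{only if} direction I would first invoke the fact that automaticity of a finitely generated semigroup does not depend on the chosen finite generating set \cite{CRRT01}, so that there is an automatic structure $L\subseteq X^*$; the claim is that this $L$ has the fellow‑traveller property under the induced metric. Given $W,U\in L$ with $d(W,U)\le1$, the element $\overline W^{-1}\overline U$ has length $\le1$ in $G$, so at least one of $\overline W\eqb{S}\overline U$, $\overline{Wx}\eqb{S}\overline U$ ($x\in X$), $\overline{Ux}\eqb{S}\overline W$ ($x\in X$) holds, and accordingly $(v,w)\delta_X$ is accepted by a DFA $M$ for some $L_y$ with $(v,w)\in\{(W,U),(U,W)\}$. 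Writing $N$ for the number of states of $M$: for each $n$, after $M$ has read the prefix of $(v,w)\delta_X$ that encodes $v(n)$ and $w(n)$ it is in a state $q$ lying on an accepting run, so some word $\tau$ of length $\le N$ drives $q$ to an accepting state; then $\delta_X(v(n),w(n))\,\tau$ is accepted, hence equals $(v',w')\delta_X$ for some $v',w'\in L$ with $\overline{v'y}\eqb{S}\overline{w'}$, where—reading off the shape of $\delta_X$—$v'$ extends $v(n)$ and $w'$ extends $w(n)$ by at most $N$ letters each (one of them possibly unchanged, when $n$ exceeds the relevant length). Consequently $\overline{v(n)}^{-1}\overline{w(n)}=\bigl(\overline{v(n)}^{-1}\overline{v'}\bigr)\bigl(\overline{v'}^{-1}\overline{w'}\bigr)\bigl(\overline{w'}^{-1}\overline{w(n)}\bigr)$ is a product of elements of $G$ of lengths $\le N$, $\le1$, $\le N$, so $d(v(n),w(n))\le 2N+1$; taking the largest of these bounds over the finitely many $y\in X\cup\{\eps\}$ gives a single fellow‑traveller constant that works for all such $W,U$.

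The bookkeeping in the automaton construction and the decoding of $\delta_X(v(n),w(n))\,\tau$ are routine. The step I expect to need the most care is the very first one: identifying the induced metric with the metric of $\Gamma(G,\pmX)$, since this is what makes the set of occurring word differences finite and so converts the fellow‑traveller condition into exactly the bounded‑word‑difference statement that both automaton constructions exploit. This is the one place where embeddability is genuinely essential—without it one is forced back onto the less transparent characterisations of \cite{HT06,SS04}, in which additional hypotheses beyond fellow‑travelling are required.
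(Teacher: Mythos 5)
Your proof is correct and follows essentially the same route as the paper: both directions rest on the observation that embeddability makes the induced metric coincide with the word metric of the co-presented group $G$, which converts the fellow-traveller condition into the classical bounded word-difference statement. The paper handles the two directions by citing Lemma 3.12 of \cite{CRRT01} (``only if'') and the regularity of the word-difference language $J_x$ from \cite{EPS92} intersected with $(L\times L)\delta_X$ (``if''), whereas you spell out the underlying automaton constructions and pumping argument directly; the content is the same.
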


The ``only if'' part of the proof of Theorem \ref{thm:autoInEmbdlSG} follows immediately from Lemma 3.12 in \cite{CRRT01}. We prove the ``if'' part. For embeddable semigroups we have the following lemma:
\begin{lemma}
Suppose $S$ is an embeddable semigroup finitely generated by $X$ and consider the induced metric on the Cayley graph $\Gamma$. Let $k$ be a natural number and let $x\in X\cup\Set{\eps}$. The following language, denoted by $FT_{x}^k(S,X)$, is regular:
\[
FT_{x}^k(S,X) = \Set{(W,U)\delta_X | \begin{array}{l}
  \textrm{$W$ and $U$ are $k$-fellow-travellers and } \\
  Wx\eqb{S} U \\
\end{array}
}
\]
\end{lemma}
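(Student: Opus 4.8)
\emph{Proof idea.} The plan is to build a finite-state automaton over the alphabet $X(2,\$)$ that, while scanning a padded pair $(W,U)\delta_X$, keeps track of how far apart the two prefixes read so far are, measured inside the co-presented group $G$ of $S$. Recall that $S$ embeds in $G$ as the sub-semigroup of positive words, so $\Gamma(S,X)$ sits inside $\Gamma(G,\pmX)$ and the induced metric on $\Gamma(S,X)$ is, by definition, the restriction of the word metric $d_G$ of $\Gamma(G,\pmX)$. Since $d_G$ is left-invariant, for positive elements we have $d(\overline{W(n)},\overline{U(n)}) = d_G(\overline{W(n)},\overline{U(n)})$, and this depends only on the group element $g_n \define \overline{W(n)}^{-1}\,\overline{U(n)} \in G$ (viewing the positive elements $\overline{W(n)},\overline{U(n)}$ inside $G$). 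Writing $B_k$ for the ball of radius $k$ about the identity in $\Gamma(G,\pmX)$ — a finite set because $G$ is finitely generated — the statement that $W$ and $U$ are $k$-fellow-travellers is precisely the statement that $g_n \in B_k$ for every $n$.

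First I would take the automaton's state set to be $B_k\times\Set{0,1,2}$ together with an absorbing sink $\mathtt{dead}$, where the second coordinate records whether both words are still being read ($0$), $W$ has already been exhausted ($1$), or $U$ has already been exhausted ($2$); the start state is $(1,0)$ with $1\in G$ the identity. From $(g,0)$: on input $(a,b)$ with $a,b\in X$ go to $(\overline a^{-1} g\,\overline b,\,0)$; on input $(\$,b)$ go to $(g\,\overline b,\,1)$; on input $(a,\$)$ go to $(\overline a^{-1} g,\,2)$. From $(g,1)$ the only legal inputs are $(\$,b)$ with $b\in X$, going to $(g\,\overline b,\,1)$; from $(g,2)$ the only legal inputs are $(a,\$)$ with $a\in X$, going to $(\overline a^{-1} g,\,2)$. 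In every transition, if the new group element would leave $B_k$ or if an illegal input symbol is read, the automaton moves to $\mathtt{dead}$, from which it never returns. An easy induction on $n$ shows that on an input of the form $(W,U)\delta_X$, after reading the prefix of length $n$ the automaton is in the state $(g_n, m_n)$ — with the convention $W(n)=W$ for $n\geq|W|$, and likewise for $U$ — unless some $g_j$ with $j\leq n$ already left $B_k$, in which case it is in $\mathtt{dead}$.

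Next I would read the two defining conditions of $FT_x^k(S,X)$ off a run. Because the prefixes stabilise for $n\geq\max(|W|,|U|)$, the condition ``$g_n\in B_k$ for all $n$'' holds exactly when the automaton avoids $\mathtt{dead}$, i.e. exactly when $W$ and $U$ are $k$-fellow-travellers. For the equation, injectivity of the embedding $S\hookrightarrow G$ together with the fact that $\pi$ agrees with evaluation in $G$ gives $Wx\eqb{S}U$ iff $\overline W\,\overline x=\overline U$ in $G$ iff $\overline W^{-1}\overline U=\overline x$, and $\overline W^{-1}\overline U$ is precisely the $G$-component of the final state reached on input $(W,U)\delta_X$. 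Hence, declaring the accept states to be all $(g,m)$ with $g=\overline x$ (there is such a state precisely when $\overline x\in B_k$, which holds whenever $k\geq1$; otherwise $FT_x^k(S,X)$ is empty and the claim is trivial) yields a finite automaton recognising exactly $FT_x^k(S,X)$, so this language is regular.

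The only slightly delicate point is the bookkeeping around the moment one of the two words runs out: one must check that the updates $g\mapsto g\,\overline b$ and $g\mapsto\overline a^{-1} g$ continue to compute $\overline{W(n)}^{-1}\,\overline{U(n)}$ once padding symbols $\$$ appear, and that carrying the flag in $\Set{0,1,2}$ and forbidding a non-$\$$ symbol after a $\$$ symbol (in the same coordinate) is exactly what restricts attention to genuine $\delta_X$-images rather than arbitrary strings over $X(2,\$)$. Finiteness of $B_k$, well-definedness of the transitions in $G$, and the passage between $\eqb{S}$ and equality in $G$ are all immediate from embeddability and from the definition of the induced metric.
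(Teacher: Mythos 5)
Your proof is correct and is essentially the same argument the paper uses, just unfolded. The paper cites the classical fact (Theorem 2.3.4 of Epstein et al.) that the fellow-traveller language over the co-presented group $G$ is regular and then intersects with $(X^*\times X^*)\delta_X$ to restrict to positive words; your automaton over $B_k\times\Set{0,1,2}\cup\Set{\mathtt{dead}}$ is precisely the standard construction hiding behind that citation, built directly over $X(2,\$)$ so that the restriction to positive words is baked in rather than obtained by intersection. The only thing you gain is self-containment; the only thing you lose is brevity. (One small remark: the automaton exists as an abstract finite object from finiteness of $B_k$, which is all regularity requires, so the fact that one cannot in general \emph{compute} membership of $\overline a^{-1}g\overline b$ in $B_k$ without solving the word problem in $G$ is not an obstruction — you implicitly rely on this, correctly.)
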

\begin{proof}
Denote by $G$ the co-presented group of $S$ generated as a semigroup by $X^{\pm1}$. $J_x$ denote the following language:
\[
J_{x} = \Set{(W,U)\delta_{X^{\pm1}} | \begin{array}{l}
  \textrm{$W$ and $U$ are $k$-fellow-travellers and } \\
  Wx\eqb{G} U \\
\end{array}
}
\]
It is well known that $J_x$ is regular (see the proof of Theorem 2.3.4 in \cite{EPS92}). Hence, it follows that $FT_{x}^k(S,X)$ is regular since $FT_{x}^k(S,X) = J_x \cap (X^*\times X^*)\delta_X$.
\end{proof}

\begin{proof}[Proof of Theorem \ref{thm:autoInEmbdlSG} (if part)]
Let $S$ be an embeddable semigroup, finitely generated by $X$. Suppose $L\subseteq X^*$ is a regular language which is onto $S$ through the natural map and which has the fellow-traveller property for some constant $k$. We show that $S$ is automatic by showing that for all $x\in X\cup\Set{\eps}$ the set $L_{x}=\Set{(W,U)\delta_X|W,U\in L;\; Wx=_S U}$ is regular. First, notice that $L_{x}\subseteq FT_{x}^k(S,X)$ since $L$ has the fellow-traveller property. Next, since intersection preserves regularity \cite[Thm.\ 4.8]{HU} the set
\[
(L\times L)\delta_X \cap FT_{x}^k(S,X)
\]
is regular. Finally, the elements in $L_x$ are elements of $FT_{x}^k(S,X)$ having the form $(W,U)\delta_X$ with $W$ and $U$ in $L$ and thus there is an equality $L_{x} \cap FT_{x}^k(S,X) = (L\times L)\delta_X \cap FT_{x}^k(S,X)$. This implies that
\[
L_{x} = (L\times L)\delta_X \cap FT_{x}^k(S,X)
\]
and consequently $L_{x}$ is regular.
\end{proof}

Next, we show how to generate an automatic structure for embeddable semigroups through regular partial orders (an order ``$\prec$'' on $X^*$ is regular if the set $\Set{(W,U)\delta_X|W\prec U}$ is regular). This technique is called `falsification by fellow travellers' and is based on a work by Davis and Shapiro (see also \cite{Pei96,Wei07}).

\begin{theorem} \label{thm:falseification}
Let $S$ be an embeddable semigroup finitely generated by $X$. Suppose ``$\prec$'' is a regular partial order on $X^*$. Denote by $M_\prec$ the following set:
\[
M_\prec = \Set{W\in X^* | \textrm{for all $U\in X^*$ if $W=_S U$ then $W\prec U$} }
\]
We assume that $\pi_{X,S}(M_\prec)=S$. Suppose there is a constant $k$ such that the following properties of ``$\prec$'' holds:
\begin{enumerate}
\item[(R)] If $W\not\in M_\prec$ then there is $U\in X^*$ such that $W=_S U$,  $U \prec W$, and $W,U$ are $k$-fellow-travellers.

\item[(FT)] If $W$ and $U$ in $M_\prec$ and $Wa=_S U$ for some $a\in X\cup\Set{\eps}$ then $W$ and $U$ are $k$-fellow-travellers.
\end{enumerate}
Then, $S$ is an automatic semigroup.
\end{theorem}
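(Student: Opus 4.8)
The plan is to apply Theorem~\ref{thm:autoInEmbdlSG} with the language taken to be $M_\prec$ itself. Since $\pi_{X,S}(M_\prec)=S$ is assumed, it remains to show two things: that $M_\prec$ has the fellow-traveller property under the induced metric, and that $M_\prec$ is a regular language. The first uses only (FT) and is short; the second is the substantive part and is exactly where (R) enters (this is the ``falsification by fellow travellers'' mechanism).

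For the fellow-traveller property, let $G$ be the co-presented group, let $W,U\in M_\prec$, and suppose $d(W,U)\le 1$ in the induced metric, so that $\overline{W}^{-1}\overline{U}$ is represented in $\Gamma(G,\pmX)$ by a word of length at most one. There are three cases. If $\overline{W}=\overline{U}$ then, since $S$ embeds in $G$ via the positive words, $W=_S U$; as $W,U\in M_\prec$ this forces $W\prec U$ and $U\prec W$, hence $W=U$ by antisymmetry, and the two words trivially fellow-travel. If $\overline{U}=\overline{Wa}$ with $a\in X$ then $Wa=_S U$, again using that $S$ embeds in $G$, and (FT) gives that $W$ and $U$ are $k$-fellow-travellers; the case $\overline{W}=\overline{Ua}$ with $a\in X$ is symmetric. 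Thus $M_\prec$ has the fellow-traveller property with the constant $k$.

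For regularity, the key observation is the equivalence
\[
W\notin M_\prec \iff \exists\,U\in X^*:\ (W,U)\delta_X\in FT_{\eps}^{k}(S,X)\ \text{ and }\ W\not\prec U .
\]
Here ``$\Leftarrow$'' is immediate from the definition of $M_\prec$, since such a $U$ satisfies $W=_S U$ yet $W\not\prec U$. For ``$\Rightarrow$'', apply (R): it produces $U$ with $W=_S U$, $U\prec W$, and $W,U$ $k$-fellow-travellers; then $W\not\prec U$ by antisymmetry, and $(W,U)\delta_X\in FT_{\eps}^{k}(S,X)$ because that language consists precisely of the pairs $(W,U)\delta_X$ with $W,U$ $k$-fellow-travellers and $W=_S U$. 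Now $FT_{\eps}^{k}(S,X)$ is regular by the lemma on the regularity of $FT_{x}^{k}(S,X)$, and $\{(W,U)\delta_X : W\prec U\}$ is regular because ``$\prec$'' is a regular order, so the difference
\[
Q\ =\ FT_{\eps}^{k}(S,X)\ \setminus\ \{(W,U)\delta_X : W\prec U\}
\]
is regular. The right-hand side of the equivalence says exactly that $W$ lies in the first-coordinate projection of $Q$; since the projection of a regular subset of $(X^*\times X^*)\delta_X$ onto its first coordinate is again regular (a standard closure property of synchronous relations: an automaton guesses $U$ letter by letter and reads padding symbols $(\$,y)$ once $W$ is exhausted), it follows that $X^*\setminus M_\prec$ is regular, hence so is $M_\prec$. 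Theorem~\ref{thm:autoInEmbdlSG} then gives that $S$ is automatic.

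I expect the main obstacle to be the regularity of $M_\prec$, specifically pinning down the displayed equivalence: one must be sure that (R) delivers a genuine falsifier --- a $U$ with $W\not\prec U$, not merely $U\prec W$ --- and one must invoke the first-coordinate projection of a synchronous relation. The fellow-traveller verification and the bookkeeping with $\delta_X$ and with the embedding $S\hookrightarrow G$ are then routine.
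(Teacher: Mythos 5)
Your proof takes essentially the same route as the paper: reduce to Theorem~\ref{thm:autoInEmbdlSG} by showing $M_\prec$ has the fellow-traveller property (from (FT)) and is regular, the latter via the falsification-by-fellow-travellers mechanism of projecting a regular subset of $(X^*\times X^*)\delta_X$ built from $FT_\eps^k$ and the regular relation $\prec$. The only cosmetic difference is that you project $FT_\eps^k \setminus \{(W,U)\delta_X : W\prec U\}$ onto its first coordinate, while the paper projects $FT_\eps^k \cap \{(W,U)\delta_X : W\prec U\}$ onto its second; the subtlety you flag at the end (that (R) must supply a falsifier with $W\not\prec U$, not merely $U\prec W$) is genuine, is present in the paper's version too (there, $(W,W)$ always lies in $K\cap P$ when $\prec$ is reflexive), and is indeed satisfied by the concrete falsifiers constructed later, but the theorem as stated glosses over it in both treatments.
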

\begin{proof}
By assumption the set $M_\prec$ is onto $S$ through the natural map. By Property (FT). the set $M_\prec$ has the fellow-traveller property. Hence, to establish automaticity it is enough by Theorem \ref{thm:autoInEmbdlSG} to show that $M_\prec$ is regular. We denote the set $FT_{\eps}^k$ by $K$ and the set $\Set{(W,U)\delta_X | W\prec U}$ by $P$ (recall that $P$ is regular since we assumed that ``$\prec$'' is regular). Since intersection preserve regularity, the following
set is regular:
\[
K\cap P = \Set{(W,U)\delta_X | W\prec U, W=_S U, \textrm{ and }
W,U \textrm{ are $k$-fellow-travellers}}
\]
Projection also preserves regularity \cite[Prop.\ 2.2(vii)]{CRRT01} and therefore the following set is regular:
\[
C = \Set{U | \exists W : (W,U)\in K\cap P}
\]
By Property (R) an element $W$ is in $C$ if and only if it is not in $M_\prec$. Hence, by Property (R) the set $C$ is exactly the complement of $M_\prec$. Consequently, $M_\prec$ is regular since taking complement preserves regularity \cite[Thm.\ 4.5]{HU}.
\end{proof}

We will call an element of $M_\prec$ an ``$\prec$''-minimal element (reads as ``order minimal''). The theorem above shows that the set of ``$\prec$''-minimal elements is an automatic structure (assuming, of course, that the conditions of the theorem hold).

\section{Example of non-geodesic structure} \label{sec:example}

The $K_3^2$ semigroups considered in the main theorem are embeddable semigroups. We give in this section an example of a semigroup $S$ which is automatic by the main theorem but for which for a given set of generators the set of geodesics is not an automatic structure. This is in sharp contrast to the situation in $C(7)$ groups. To recall the definition, a word $W$ is geodesic if for every $U$ such that $W=U$ in $S$ we have that $|W|\leq|U|$.

The semigroup we consider is the semigroup with the following presentation:
\[
\Pres{a,b,c | abcc=cba}
\]
Here, $\R=\Set{abcc,cba}$ and $X=\Set{a,b,c}$. There are only three pieces: $a$, $b$, and $c$. Thus, the $K_3^2$ conditions holds by simple inspection and so $S$ is an embeddable semigroup (embeddable in this case in a hyperbolic group since the co-presented group is a $C(7)$ group). We give two geodesics, $V_n$ and $U_n$ of lengths $3n$ and $2n+1$, respectively, such that $V_n c = U_n$ in $S$ (i.e. $d(U_n,V_n)=1$ in the Cayley graph). Hence, if $k$ is fixed and $n$ is large enough then $V_n$ and $U_n$ are not $k$-fellow-travellers (due to the big difference in their lengths). The definitions of $V_n$ and $U_n$ follows: let $n$ be some natural number and let $V_n=(abc)^n$ and $U_n=c(ba)^n$. See Figure \ref{fig:geoInC7Sgp} for an illustration of part of the Cayley graph of $S$ containing $V_n$ and $U_n$. 

\begin{figure}[ht]
\centering
\includegraphics[totalheight=0.13\textheight]{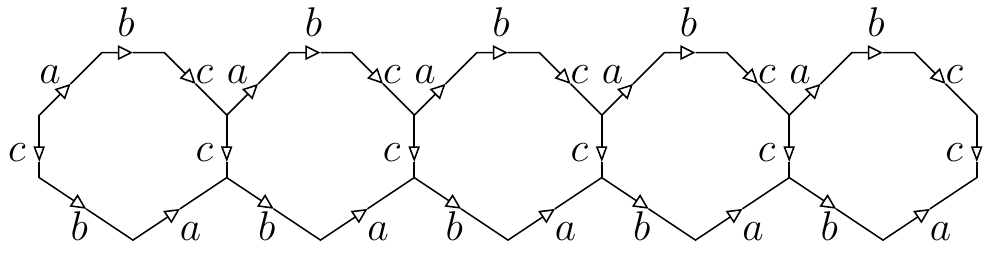}
\caption{Two non-fellow-travelleing geodesics} \label{fig:geoInC7Sgp}
\end{figure}

Denote by $s_n$ and $t_n$ the elements in $S$ presented by $U_n$ and $V_n$, respectively. Now, since the two sides of the relation are not subwords of $V_n$ we have that no other element in $X^*$ presents $t_n$. Thus, $V_n$ is a geodesic. For $U_n$ we do have a subword that is one side of the relation. However, by applying the relation one can only increase the length. Thus, $U_n$ is also a geodesic. Using the relation $abcc=cba$ we get that $(abc)^nc=c(ba)^n$ so consequently $V_n c = U_n$, as claimed. 

Another interesting observation regarding the above example is the following: by the above discussion there are no other geodesics presenting $s_n$ and $t_n$ in $X^*$. Therefore, any automatic structure for $S$ under the generating set $X$ cannot contain just geodesics, since it will then include $U_n$ and $V_n$, which is impossible.

\section{van Kampen Diagrams} \label{sec:vanKampen}

We use the theory of van Kampen diagrams, both for semigroups and groups. See \cite[Chapter V, p. 235]{LS77} for a standard introduction of van Kampen diagrams of groups and see \cite{Rem71} or \cite[p. 73-79]{Hig92} for the van Kampen diagram theory for semigroups. Here we give a unified treatment for both cases. A \emph{diagram} is a finite planar connected and simply connected $2$-complex. We name the $0$-cells, $1$-cells, and $2$-cells by \emph{vertices}, \emph{edges}, and \emph{regions}, respectively. Vertices of valence one or two are allowed. Each edge has an orientation, i.e., a specific choice of initial and terminal vertices. Given an edge $e$ we denote by $i(e)$ the initial vertex of $e$ and by $t(e)$ the terminal vertex of $e$. If $e$ is an oriented edge then $e^{-1}$ will denote the same edge but with the reverse orientation. A \emph{path} is a series of (oriented) edges $e_1,e_2,\ldots,e_n$ such that $t(e_j) = i(e_{j+1})$ for $1\leq j < n$. The length of a path $\rho$ (i.e., the number of edges along $\rho$) is denoted by $|\rho|$. If $\rho$ is the path $e_1 \cdots e_n$ then we denote by $\rho^{-1}$ the path $e_n^{-1} \cdots e_1^{-1}$. If $\rho$ is a path that decomposes as $\rho=\rho_1\rho_2$ then $\rho_1$ is a \emph{prefix} of $\rho$ and $\rho_2$ is a \emph{suffix} of $\rho$.

Given a finite \emph{group} presentation $\Grp\Pres{X |\R}$, a \emph{group diagram} over this presentation is a diagram where its edges are labelled by elements of $\pmX$ and the boundary of every region is labelled by elements of the symmetric closure of $\R$. We also require that if an edge $e$ is labelled by $x$ then $e^{-1}$ is labelled by $x^{-1}$. In the context of group diagram we say that an edge $e$ is \emph{positive} (resp., \emph{negative}) if its label is in $X$ (resp., in $X^{-1}$). In the same manner, a path is positive (resp., negative) if it consists of positive (resp., negative) edges. A \emph{boundary label} is the label of some path $\rho$ that coincides with the boundary of the diagram. Next we give the definition of semigroup diagrams; these require some additional assumptions. Suppose we are given a \emph{semigroup} presentation $\Sgp\Pres{X|L_1=R_1,\ldots,L_n=R_n}$. A \emph{semigroup diagram} $M$ over the given presentation is a group diagram over the co-presented group such that three conditions hold: (1) there is a boundary label $WU^{-1}$ where $W$ and $U$ are positive; (2) any inner vertex is an initial vertex of some positive edge (i.e., there are no inner sink vertices); (3) any inner vertex is a terminal vertex of some positive edge (i.e., there are no inner source vertices). van Kampen theorem state that equality $W=U$ holds in a group (semigroup) if and only if there is a van Kampen group (semigroup) diagram with boundary label $WU^{-1}$ (such diagrams are called equality diagrams). If we don't explicitly indicate for a given diagram whether it is a group diagram or a semigroup diagram then it may be either one of the two options.

In the sequel, if a word $W$ labels a path on the boundary of $M$ then $W$ would denote both the path and the word; the context would make the distinction clear. The term \emph{neighbors}, when referred to two regions, means that the intersection of the regions' boundaries contain an edge; specifically, if the intersection contains only vertices, or is empty, then the two regions are not neighbors. \emph{Boundary regions} are regions with outer boundary, i.e., the intersection of their boundary and the diagram's boundary contains at least one edge. If $D$ is a boundary region in $M$ then the \emph{outer-boundary} of $D$ is $\partial D \cap \partial M$ and the \emph{inner boundary} of $D$ is the rest of the boundary (i.e., the complement of the outer boundary). Regions which are not boundary regions will be called \emph{inner regions}. In a similar manner, a \emph{boundary edge} is an edge in the boundary of the diagram and an \emph{inner edge} is an edge not on the boundary. A \emph{minimal diagram} is a diagram with minimal number of regions among the diagram with the same boundary label.

Suppose $D$ and $E$ are neighboring regions in $M$ and let $\delta$ be a connected component of $\partial D \cap \partial E$. It is a well known fact that if $M$ is a minimal group diagram then the label of $\delta$ is a \emph{piece} (see the introduction for the definition). This may not be the case for general semigroup diagrams. However, as we shall shortly see, in the cases we consider the label of $\delta$ is always a peice.

\begin{definition}[Strong s-condition \cite{Kas92}]
Let $\P$ be a semigroup presentation of a semigroup $S$. We say that $\P$ has the \emph{strong s-condition} if the following hold. Suppose $W$ and $U$ are positive words and $W=U$ is an equality in the co-presented group of $S$. Suppose further that $M$ is a (group) van Kampen diagram over the \emph{co-presented group} of $S$ with $WU^{-1}$ as boundary cycle. Then, there is a boundary region $D$ in $M$ with boundary cycle $\rho \delta^{-1}$ such that the labels of $\rho$ and $\delta$ are positive and $\rho$ is the outer boundary of $D$ and a subword of $W$ or $U$. See figure \ref{fig:grpDiagToSgpDiag}.
\end{definition}

\begin{figure}[ht]
\centering
\includegraphics[totalheight=0.18\textheight]{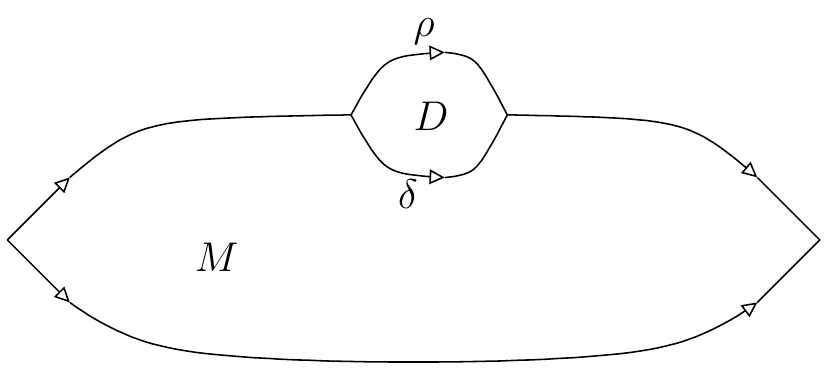}
\caption{Illustration of strong s-condition} \label{fig:grpDiagToSgpDiag}
\end{figure}

The main result of \cite{Gub94} is that $K_3^2$ semigroups have the strong s-condition (and, hence, are embeddable \cite{Kas92}). Suppose we are given a $K_3^2$ semigroup $S$ and we consider a group diagram $M$ with boundary cycle $WU^{-1}$ over the co-presented group of $S$ ($W$ and $U$ are positive words). By the strong s-condition we have some region $D$ that we can remove from $M$ such that the resulting diagram $\tilde{M}$ has a boundary label $\tilde{W}\tilde{U}^{-1}$ where $\tilde{W}$ and $\tilde{U}$ are positive (and, clearly, $\tilde{M}$ has less regions than $M$). This is the essence of the proof of the following lemma:

\begin{lemma}\label{lem:goodVanKampenDiagK23}
Let $S$ be semigroup with a $K_3^2$ presentation $\P$ and let $M$ be a minimal semigroup van Kampen diagram over $\P$. Suppose $D$ and $E$ are neighboring regions in $M$ and that $\delta$ is a connected component of $\partial D \cap \partial E$. Then, $\delta$ is labelled by a piece.
\end{lemma}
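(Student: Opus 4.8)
The plan is to reduce the claim about a minimal \emph{semigroup} diagram to the strong s-condition for the \emph{group} diagram underneath it, using the well-known fact that in a minimal group diagram the shared edge-path label is a piece, and then take care of the one thing that can go wrong: that $\delta$, being the shared boundary of two \emph{distinct} regions in a group diagram, is forced to be a piece only when those two regions carry the \emph{same} defining relation and are not ``mirror images'' that would allow a length-$1$ cancellation. First I would observe that $M$, being a semigroup van Kampen diagram, is by definition a group van Kampen diagram over the co-presented group; and since $M$ is minimal as a \emph{semigroup} diagram, it is also minimal as a \emph{group} diagram --- otherwise, by repeatedly applying the strong s-condition (as sketched in the paragraph preceding the lemma), one could peel off boundary regions to produce a group diagram with fewer regions and still a positive boundary cycle $\tilde W\tilde U^{-1}$, contradicting minimality of $M$ among semigroup diagrams with boundary $WU^{-1}$. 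Actually this needs a little care: shrinking the group diagram need not preserve the property of being a semigroup diagram (no inner source/sink vertices), so I would instead argue directly that a non-minimal group diagram contains a cancellable pair (two regions sharing a common boundary component whose two labels are mutually inverse reductions of a relator), and that the strong s-condition combined with condition $K_3^2$(c) forbids this; alternatively, invoke the standard lemma from \cite{LS77} that in a minimal group diagram every shared connected boundary component is labelled by a piece, and separately rule out the degenerate cases.

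Concretely, let $W_D$ and $W_E$ be the boundary cycle labels of $D$ and $E$ (elements of the symmetric closure of $\R$). Write $\delta$'s label as $P$. Then $P$ is a subword (up to inversion) of $W_D$ and of $W_E$. If $W_D$ and $W_E$, read as defining words, are \emph{distinct} elements of $\R$, then by the definition of a piece $P$ is immediately a piece (two distinct defining words sharing the subword $P$), and we are done; here condition $K_3^2$(c) is exactly what guarantees that two distinct defining relations give four distinct defining words, so there is no collision forcing $W_D$ and $W_E$ to be literally the same word despite coming from different regions carrying different relations. The remaining case is that $D$ and $E$ carry the same defining word $R\in\R$. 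Then I would use minimality: if $P$ were \emph{not} a piece, the two occurrences of $P$ inside the two copies of $R$ on $\partial D$ and $\partial E$ would have to sit in the same position within $R$ (that is the contrapositive of the piece definition), forcing $D$ and $E$ to be related by a ``fold'' along $\delta$; collapsing this fold (this is the classical Lyndon–Schupp cancellation reduction, valid in any minimal group diagram) removes at least one region, contradicting minimality of $M$. For this last step one must check the reduction stays inside the class of semigroup diagrams, i.e.\ introduces no inner source or sink vertices --- but since we are only identifying $D$ and $E$ and the edges of $\delta$ were positive and had positive neighbors on both sides, condition $K_3^2$(a) (both sides of a relation start, and end, with different generators) ensures no new degenerate vertices appear; if a subtlety remains, one can always pass first to a minimal group diagram, conclude $\delta$ is a piece there, and then note that the edge-label $P$ is intrinsic and does not change.

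The main obstacle I expect is precisely the bookkeeping in the previous paragraph: transferring ``minimal semigroup diagram'' to ``no cancellable pair'' and checking that the cancellation reduction does not destroy the semigroup-diagram conditions. In detail, the worry is a diagram that is minimal \emph{among semigroup diagrams} but is \emph{not} minimal as a group diagram; one must show the strong s-condition together with $K_3^2$ rules this out, or else argue the lemma directly without needing group-minimality. I would handle it by the following clean route: suppose $\delta$ is not a piece; by Observation~\ref{obs:twoPiecUniq} (applied to $P$ with $\lp{P}\ge 2$, noting $\lp{P}=1$ would already make $\delta$ a piece) there is a unique $R\in\R$ containing $P$, so $D$ and $E$ both carry $R$ and $P$ sits at the same place in each; hence the two regions can be amalgamated, yielding a diagram $M'$ with the same boundary cycle and fewer regions. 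One then verifies $M'$ is still a semigroup diagram (the boundary cycle $WU^{-1}$ is unchanged, so condition (1) holds; conditions (2) and (3) about inner vertices are preserved because amalgamation only removes the interior of $\delta$ and every affected vertex retains its positive in- and out-edges, using $K_3^2$(a) to see the endpoints of $\delta$ are not turned into sources or sinks). This contradicts minimality of $M$, so $\delta$ must be labelled by a piece.
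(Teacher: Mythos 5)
Your overall strategy---assume $\delta$ is not a piece, deduce via Observation~\ref{obs:twoPiecUniq} and condition $K_3^2$(c) that $D$ and $E$ carry the same relator with $\delta$ at the same position, and then cancel the pair to contradict minimality---is the classical reducible-pair argument, and it is genuinely different from what the paper does. The paper never touches $D$ and $E$ at all: it takes a minimal \emph{group} diagram $N$ over the co-presented group with the same positive boundary label $WU^{-1}$, and shows by induction (stripping off, at each step, the boundary region supplied by the strong s-condition and re-attaching it) that $N$ is automatically a \emph{semigroup} diagram. Since $M$ is minimal among semigroup diagrams this forces $|M|\le|N|$, hence $M$ is itself a minimal group diagram, and the standard fact about minimal group diagrams finishes. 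The whole point of that route is that it never requires checking that any reduction move preserves the no-inner-source/no-inner-sink conditions.

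Your version has a genuine gap exactly at the step you yourself flag as ``the main obstacle.'' After cutting out $D\cup E$ and sewing the two complementary arcs $\gamma_D$ and $\gamma_E$ together, you assert the result is still a semigroup diagram because ``every affected vertex retains its positive in- and out-edges, using $K_3^2$(a).'' That is not a verification: the identification glues vertices of $\gamma_D$ to vertices of $\gamma_E$ whose local in/out pictures you have not controlled, the endpoints of $\delta$ acquire new incidences, and if $\partial D\cap\partial E$ has components other than $\delta$ the surgery may not even yield a planar simply connected complex. $K_3^2$(a) says nothing about any of this. Your fallback---``pass first to a minimal group diagram, conclude $\delta$ is a piece there, and note that the edge-label is intrinsic''---is circular: $\delta$ is an arc of $M$, not of an a priori smaller minimal group diagram, so ``$\delta$ is a piece there'' is meaningless unless you have already shown that $M$ \emph{is} that minimal group diagram, which is precisely the nontrivial content that the paper's strong-s-condition induction supplies and that your argument leaves unproved.
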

\begin{proof}[Proof (sketch)]
Denote by $G$ the co-presented group of $S$. It is enough to show that $M$ is a minimal group diagram over $G$ (because, in minimal group diagrams every edge is labelled by a piece). Denote by $|M|$ the number of regions in $M$. Let $N$ be a minimal group diagram over $G$ with the same boundary label as $M$. Clearly, $M$ is a group diagram over $G$ so it remains to show that it is minimal, or in other words to show that $|M|=|N|$. It is also clear that $|N|\leq |M|$. Thus, we need to show that $|M|\leq |N|$. We will do that by showing that $N$ is a semigroup diagram. We prove that $N$, a minimal \emph{group} diagram, is a semigroup diagram by induction on $|N|$. If $|N|=0$ (i.e., there are no regions in $N$) then clearly there are no inner source or sink vertices in $N$ so $N$ is a semigroup diagram. Suppose that the assertion is true when $|N|<n$ and we have that $|N|=n$. We use the strong s-condition and we denote the region it guarantees by $D$. We remove the region $D$ from $N$ and denote the new diagram by $N'$. Clearly, $|N'|<n$ so by induction hypothesis we get that $N'$ is a semigroup diagram. Finally, by attaching $D$ back to $N'$ (which restores the diagram $N$) we see that $N$ is also a semigroup diagram.
\end{proof}

A \emph{$(\mu,\sigma)$-thin} diagram is a diagram $M$ with boundary cycle $\mu\sigma^{-1}$ where every region $D$ has at most two neighbors and $\partial D$ has non-empty intersection with $\mu$ and $\sigma$. See an illustration of such diagram in Figure \ref{fig:thinEqualityDiagram}. The notion of thin diagrams (also known as one layered diagrams) appeared in \cite{Pei96,Wei07} and in several other earlier works.

\begin{figure}[ht]
\centering
\includegraphics[totalheight=0.18\textheight]{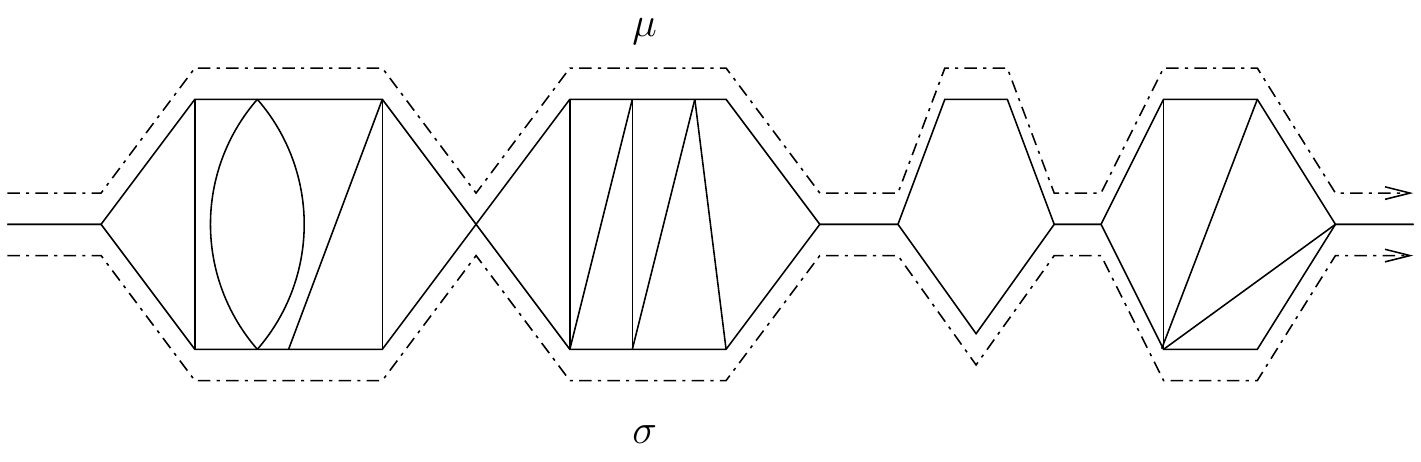}
\caption{Thin equality diagram} \label{fig:thinEqualityDiagram}
\end{figure}

Let $M$ be a diagram and $D$ a region in $M$. Suppose $\omega$ is a subpath of $\partial D$ with label $W$. We denote by $N_\omega^D$ the number of neighbors of $D$ along $\omega$ counted with multiplicity; see Figure \ref{fig:pcsBndNeis} for an illustration of neighbors along a path. 
\begin{figure}[ht]
\centering
\includegraphics[totalheight=0.18\textheight]{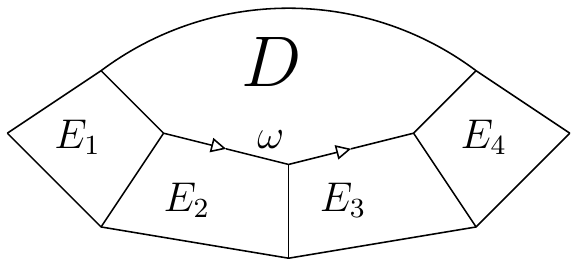}
\caption{The neighbors of $D$ along a subpath $\omega$ of $\partial D$} \label{fig:pcsBndNeis}
\end{figure}
Suppose next that $M$ is a minimal \emph{semigroup} van Kampen diagram over a presentation for which the conditions of the main theorem hold. By lemma \ref{lem:goodVanKampenDiagK23} we have that the neighbors of $D$ along $\omega$ induce a decomposition of $W$ into pieces and thus we get that $N_\omega^D \geq \lp{W}$. An immediate implication of this is that $M$ is a $C(7)$ diagram (i.e., a diagram where every inner region has at least seven neighbors) and we can use the tools of small cancellation theory for these diagrams. The main diagrammatic result of this section is the following:

\begin{proposition} \label{prop:condForThin}
Assume a semigroup $S$ is a semigroup with presentation $\Sgp\Pres{X|L_1=R_1,\ldots,L_n=R_n}$ for which the conditions of the main theorem hold. Let $W$ and $U$ be two positive words and let $a\in X\cup\Set{\eps}$. If $M$ is a minimal semigroup diagram over the presentation with boundary cycle $WaU^{-1}$ then there are two options:
\begin{enumerate}
 \item $M$ is a $(Wa,U)$-thin diagram.
 \item There is a boundary region $D$ with $\partial D=\rho\delta^{-1}$ such that:
 \begin{enumerate}
  \item $\rho$ is the outer boundary of $D$ and is a subpath of $W$ or $U$.
  \item $\delta$ is the inner boundary and $N_\delta^D = 3$.
  \item $\rho$ and $\delta$ are labelled by positive words.
 \end{enumerate}
\end{enumerate}
\end{proposition}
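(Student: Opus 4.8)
The plan is to run a small-cancellation analysis of $M$, combining the two structural facts already in hand --- that $M$ is a $C(7)$ diagram and that $\P$ satisfies the strong s-condition --- with the piece-length inequalities supplied by $K_3^2$ and $(\dag)$.

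First the trivial case: if $M$ has no regions there is nothing to share, so $M$ is vacuously a $(Wa,U)$-thin diagram (case (1)); assume then that $M$ has at least one region. Recall that by Lemma~\ref{lem:goodVanKampenDiagK23} every connected component of $\partial D\cap\partial E$ between neighbouring regions is labelled by a piece, so (using $K_3^2$(a), which prevents a neighbour from straddling a corner of a region, the corners being local sinks/sources for the positive orientation) one has $N_\omega^D\geq\lp{\mathrm{lbl}(\omega)}$ for every subpath $\omega$ of $\partial D$, whence $M$ is $C(7)$. Next, apply the strong s-condition to $M$ (legitimate since $Wa$ is a positive word): there is a boundary region $D$ with $\partial D=\rho\delta^{-1}$, $\rho$ the outer boundary and a subword of $Wa$ or of $U$, with $\rho$ and $\delta$ positive. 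As $\rho$ is positive, $\delta^{-1}$ is the entire negative part of $\partial D$, so $\mathrm{lbl}(\rho)=L$ and $\mathrm{lbl}(\delta)=R$ for one defining relation $L=R$; hence $N_\rho^D=0$, all neighbours of $D$ lie along $\delta$, and $m:=N_\delta^D\geq\lp R\geq 3$ by $K_3^2$(b).

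The core is a case analysis on $m$. If $m=0$ then $D=M$ is a single region and, since both boundary arcs $Wa$ and $U$ are non-empty, $M$ is $(Wa,U)$-thin (case (1)). The values $m=1,2$ are impossible, since the $m$ neighbours of $D$ would decompose $R$ into $m$ pieces, forcing $\lp R\leq m\leq 2$ against $\lp R\geq 3$. If $m=3$ then $\lp R\leq 3$, so $\lp R=3$ and $N_\delta^D=3$, which (after the adjustment below that replaces $\rho$ by a subword of $W$ or $U$) is precisely case (2). The remaining case $m\geq 4$ is the substantive one: here I would not work with $D$ directly but run a combinatorial Gauss--Bonnet argument on the disk $M$ in the style standard for $C(7)$ diagrams --- interior regions (at least $7$ sides) and interior vertices (valence $\geq 3$) carry non-positive curvature, so the total curvature $2\pi$ is forced onto the boundary --- producing a boundary region that meets $\partial M$ in a single outer arc with at most three neighbours along the rest of its boundary; one then expects such a region to be, or to yield via the strong s-condition, a region with a full relation side exposed, hence to fall under the case $m=3$.

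The adjustment concerning the distinguished edge is as follows. If $\mathrm{lbl}(\rho)=L$ is a subword of $U$ then $\rho$ is already a subpath of $U$; if $L$ is a subword of $Wa$ it is either a subpath of $W$ or it ends in the final letter $a$. In the last situation $D$ is the region on the far side of the $a$-edge; I would peel $D$ off, obtaining a smaller minimal semigroup diagram $M'$ with boundary cycle $W''RU^{-1}$ (positive against $U^{-1}$, with no distinguished letter), apply the proposition inductively to $M'$, and observe that the region it returns has exposed side disjoint from the arc along which $D$ is re-attached (that arc lies on $\partial M'$, hence is interior after re-attaching), so it remains a region of the required type for $M$, now with exposed side a subpath of $W$ or $U$. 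When $a=\eps$ this is vacuous. The step I expect to be the main obstacle is the case $m\geq 4$: reconciling the ``few-neighbours'' region produced by the curvature estimate with the ``full-side-exposed'' region produced by the strong s-condition --- equivalently, showing that if every strong s-condition region of $M$ has at least four interior neighbours then $M$ is already thin --- and this is exactly where the value $7$ in $(\dag)$, rather than $6$, is used to the full.
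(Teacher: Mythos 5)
Your proof goes via the strong s-condition where the paper goes via Greendlinger's lemma, and the case you flag as ``the substantive one'' ($m\geq4$) is where your argument stops being a proof. The good news is that this case never actually arises: Lemma~\ref{lem:RgnsInC3Diag}, already recorded in the paper, says that in a minimal diagram over a $C(3)$ presentation any region $D$ with boundary $\rho\delta^{-1}$ (both sides positive) has $N_\rho^D\leq3$ and $N_\delta^D\leq3$. Combined with $N_\delta^D\geq\lp{R}\geq3$ from $K_3^2$, this pins down $N_\delta^D=3$ immediately, and your proposed Gauss--Bonnet detour is unnecessary. You spotted the lower bound but overlooked the matching upper bound, and as a result you have not eliminated $m\geq4$ and you have not proved the proposition. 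Note also that $(\dag)$ plays no role in this step at all; you conjecture it is what handles $m\geq4$, but it is not used here in the paper (it appears later, in Lemma~\ref{lem:essencialVert}).

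There is a second issue with your choice of starting tool. The strong s-condition places the exposed positive side $\rho$ anywhere on $Wa$ or $U$, so it can cover the distinguished edge $a$; you patch this by peeling off that region and inducting, but the claim that the region returned by the inductive call for $M'$ has exposed side disjoint from the re-attachment arc is asserted, not proved, and it is not obviously true. The paper instead invokes Greendlinger's lemma for $C(7)$ diagrams (Lemma~\ref{lem:Greendlinger}), whose statement already guarantees a boundary region with at most three neighbours whose outer boundary avoids $\xi$; it then deduces that one of $\rho,\delta$ is entirely inner because the outer boundary is a monotone (all-positive or all-negative) subpath of $\partial M$ while $\partial D=\rho\delta^{-1}$ changes sign exactly once. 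That is a shorter route that sidesteps both of your difficulties: no $m\geq4$ case analysis and no peeling argument. If you want to keep the strong s-condition as the entry point you can, but you must cite Lemma~\ref{lem:RgnsInC3Diag} to kill $m\geq4$ and you must actually carry out (or avoid) the peel-and-reattach step.
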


Before we can give the proof of Proposition \ref{prop:condForThin} we need two diagrammatic results. The first is a lemma from \cite{Hig92} (see also Lemma 4.8(a) of \cite{Rem71}).

\begin{lemma}[Lemma 5.2.10(i) of \cite{Hig92}]\label{lem:RgnsInC3Diag}
Let $S$ be a semigroup with presentation $\P$ such that any defining word has piece-length at least three. Suppose $W=U$ is an equality in $S$ and $M$ is a minimal van Kampen diagram with boundary label $WU^{-1}$. Let $D$ be a region in $M$ with boundary cycle $\rho\delta^{-1}$ where $\rho$ and $\delta$ are positively labelled. Then, $N_\rho^D \leq 3$ and $N_\delta^D \leq 3$.
\end{lemma}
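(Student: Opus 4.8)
The plan is to argue by contradiction, using the minimality of $M$. Since a region's boundary label $LR^{-1}$ may be read equally well as $RL^{-1}$, the two arcs $\rho$ and $\delta$ play symmetric roles, so it suffices to prove $N_\rho^D\le 3$; assume for contradiction that $N_\rho^D\ge 4$.

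First I would fix the local picture. Let $E_1,\dots,E_m$, with $m\ge 4$, be the neighbours of $D$ met, in order and with multiplicity, as one traverses $\rho$ from beginning to end; let $\alpha_i=\partial D\cap\partial E_i$ be the corresponding connected sub-arc of $\rho$; and write $\rho=\beta_0\alpha_1\beta_1\alpha_2\cdots\alpha_m\beta_m$. Two preliminary observations are in order. First, every edge of an intermediate gap $\beta_1,\dots,\beta_{m-1}$ lies on $\partial M$: such an edge is on $\partial D$, so if it were interior it would be shared with exactly one other region, which would then be a neighbour of $D$ along $\rho$ lying strictly between $E_i$ and $E_{i+1}$, contradicting the enumeration. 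Second, each $\alpha_i$ is labelled by a piece, hence has piece-length $1$, and is a subword both of the defining word labelling $\rho$ (one side of $D$'s relator) and of the side of $E_i$'s relator facing $D$ — in the setting of the main theorem this is Lemma~\ref{lem:goodVanKampenDiagK23}, and the general $C(3)$ version is part of what \cite{Hig92} establishes.

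The heart of the argument is to show that having four shared arcs is impossible. I would pick two suitable consecutive neighbours, say $E_j$ and $E_{j+1}$, examine the part of $M$ they bound together with $D$, and ``flip'' the relator of $D$ off its $\rho$-side onto its $\delta$-side so as to merge $D$ with $E_j$ (or with $E_{j+1}$), thereby obtaining a van Kampen diagram with the same boundary label $WU^{-1}$ but strictly fewer regions, contradicting minimality. The ingredient that makes this possible — and that fails without it — is precisely the hypothesis that every defining word has piece-length at least $3$ (reinforced by the $K_3^2$ conditions in the main theorem's setting): it constrains the positions of the pieces $\alpha_j,\alpha_{j+1}$ inside the relators of $E_j,E_{j+1}$ tightly enough that the merged configuration is genuinely smaller, rather than a mere redrawing.

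The step I expect to be the main obstacle is precisely this surgery together with its case analysis. One must distinguish an intermediate gap $\beta_i$ that is a single vertex, where abutting shared arcs threaten to produce an interior source or sink vertex (forbidden in a semigroup diagram), from one carrying genuine boundary edges, where $D$ is pinched repeatedly against $\partial M$; and in each case one must verify that the complex produced by the flip is still a legitimate semigroup van Kampen diagram for $WU^{-1}$ — in particular that conditions (2) and (3) of that definition survive — and that the region count really has dropped. Once the contradiction is obtained, $N_\rho^D\le 3$ follows, and $N_\delta^D\le 3$ by the symmetry noted at the outset. Throughout I would follow the treatments in \cite{Hig92} and \cite{Rem71}, of which this lemma is essentially a transcription.
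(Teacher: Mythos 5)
The paper does not give its own proof of this lemma; it is cited verbatim from Higgins \cite{Hig92} (with a pointer to Remmers \cite{Rem71}), so there is no in-paper argument to compare yours against.

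Judged on its own, your sketch has a genuine gap at exactly the point you flag as ``the main obstacle.'' The set-up and the two preliminary observations are fine, but the central surgery --- ``flip the relator of $D$ off its $\rho$-side onto its $\delta$-side so as to merge $D$ with $E_j$'' --- is not a well-defined operation on van Kampen diagrams, and it does not visibly reduce the region count. Replacing $\rho$ by a path labelled by the complementary defining word is a local re-presentation of the single region $D$ and leaves the number of regions unchanged; and there is no sense in which $D$ then ``merges'' with $E_j$, since the union of two $2$-cells is not bounded by a defining relation and therefore is not a region of a diagram over $\P$. The only standard move that reduces the region count of a minimal diagram is cancellation of a mirror pair of regions across a common arc, and nothing in your set-up shows that $N_\rho^D\ge 4$ forces such a pair to appear. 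It is also unclear where the constant $3$ enters: as you describe it, the flip would apparently already be available for two consecutive shared arcs, i.e.\ when $N_\rho^D=3$, which would prove a false bound. You explicitly defer to \cite{Hig92} and \cite{Rem71} for the surgery and the case analysis, which is honest, but it means the proposal is a plan to look up the proof rather than a proof. In particular the proposal does not visibly use what is special about semigroup diagrams --- the prohibition on inner sources and sinks, and the fact that each region has exactly two positive boundary sides running from a common source to a common sink --- and it is hard to see how the bound $3$ can be extracted from the $C(3)$ hypothesis without exploiting that orientation structure.
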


The consequence of Lemma \ref{lem:RgnsInC3Diag} is that minimal van Kampen diagrams over presentations having the conditions of the main theorem have no inner regions. Namely, in these diagrams every region has a boundary. Next we state a lemma which gives information on the structure of a $C(7)$ diagram. This lemma can be deduced from the proof of Greendlinger Lemma \cite[Thm. 4.5]{LS77}. A direct proof of (a generalization of) the lemma can be found in \cite[Thm. 13]{Wei07}. Note however that the theorem in \cite{Wei07} needs to be translated to the terminology used here; we suppress the technical details.

\begin{lemma}[Greendlinger's lemma] \label{lem:Greendlinger}
Let $M$ be a $C(7)$ diagram with boundary cycle $\mu\xi\sigma^{-1}$ where $\xi$ is labelled by  a generator or is empty. Then, one of the following holds:
\begin{enumerate}
	\item $M$ is a $(\mu\xi,\sigma)$-thin diagram.
 \item There is a boundary region $D$ with $\partial D=\rho\delta^{-1}$ such that:
 \begin{enumerate}
  \item $\rho$ is the outer boundary of $D$ and is a subpath of $\mu$ or $\sigma$.
  \item If $\xi$ is labelled by a generator then $\rho$ does not contain $\xi$.
  \item $D$ has at most three neighbors in $M$.
 \end{enumerate}
\end{enumerate}
\end{lemma}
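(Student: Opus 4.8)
Since a $C(7)$ diagram is in particular a $C(6)$ diagram, this is a version of the classical Greendlinger dichotomy for small-cancellation diagrams, and the plan is to prove it by the combinatorial curvature (Gauss--Bonnet) method of \cite[Ch.\ V]{LS77}, carrying the distinguished boundary edge $\xi$ along. It is cleanest to prove the slightly more general statement in which $\xi$ is allowed to be an arbitrary subpath of $\partial M$ lying between $\mu$ and $\sigma^{-1}$ (with condition (b) becoming ``$\rho$ is disjoint from $\xi$''; when $\xi$ is a single edge or empty we recover the stated form), and to induct on the number of regions $|M|$. If $|M|=0$ then $M$ is a tree, has no regions, and alternative 1 holds vacuously, so assume $|M|\ge1$. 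We may first prune all spurs (degree-one interior vertices); note that because $M$ is a $C(7)$ diagram every interior region already has at least seven neighbours (so in particular no interior monogons or bigons), and that deleting a boundary region from a $C(7)$ diagram again yields a $C(7)$ diagram, since a region that loses a neighbour thereby acquires a boundary edge and so is no longer required to have seven neighbours.

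The engine is the curvature count. With the standard combinatorial angle assignment (corner angle $2\pi/\deg(v)$ at an interior vertex $v$, and the usual convention on the boundary) one has the Gauss--Bonnet identity $\sum_{\text{regions }R}\kappa(R)+\sum_{v\in\partial M}\kappa(v)=2\pi$ for the disc $M$. Any interior region $R$ has all vertices of degree $\ge3$, so each of its corners contributes an angle $\le 2\pi/3$, and since $R$ has at least seven sides, $\kappa(R)\le 2\pi-7\cdot\tfrac{\pi}{3}<0$. Hence the positive curvature totalling $2\pi$ is concentrated on the boundary vertices and the boundary regions, and a refinement of the estimate shows that a boundary region $D$ with $\partial D=\rho\delta^{-1}$, $\rho$ its outer boundary, can be positively curved only if $N_\delta^D\le 3$. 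If no boundary region is positively curved, then all the curvature sits on the boundary vertices; tracking where it can sit forces $M$ to be a single layer of regions, that is, a $(\mu\xi,\sigma)$-thin diagram, which is alternative 1.

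Otherwise fix a boundary region $D$ with $\partial D=\rho\delta^{-1}$ ($\rho$ outer) and $N_\delta^D\le 3$. If $\rho$ is a subpath of $\mu$ or of $\sigma$ and disjoint from $\xi$, then (a), (b), (c) hold and we are in alternative 2. Otherwise $\rho$ meets $\xi$ or straddles one of the (at most two) vertices where $\mu$, $\xi$ and $\sigma$ meet; note that since $\xi$ is a single edge there is at most one region whose outer boundary meets $\xi$, and a corner vertex not met by that region lies on the outer boundary of a region whose $\rho$ is already a genuine subpath of $\mu$ or of $\sigma$ --- so there are only boundedly many ``exceptional'' boundary regions. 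Peel such a $D$ off $M$: the result is a $C(7)$ diagram $M'$ with fewer regions whose boundary again has the form $\mu'\xi'(\sigma')^{-1}$ with $\xi'$ a boundary subpath between $\mu'$ and $(\sigma')^{-1}$ (here $\xi'$ absorbs the inner boundary $\delta$ of $D$, which is why the generalized hypothesis is needed). Applying the induction hypothesis to $M'$ and re-gluing $D$ gives the conclusion for $M$: if $M'$ is thin then $M$ is thin or $D$ itself is the region of alternative 2, and if $M'$ supplies a witness region then that same region works in $M$, because its outer boundary lies in $\mu'$ or $\sigma'$, hence in $\mu$ or $\sigma$, and is disjoint from $\xi$.

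The main obstacle is entirely in the two middle paragraphs: making the curvature estimate sharp enough to conclude $N_\delta^D\le3$ for a positively-curved boundary region (rather than a weaker bound), and the bookkeeping needed to run the induction so that the extracted region $D$ genuinely has its outer boundary a subpath of $\mu$ or of $\sigma$ missing $\xi$, together with the verification that alternative 1 is the only way the curvature can fail to produce such a $D$. These are precisely the computations carried out in \cite[Thm.\ 4.5]{LS77} and, with a distinguished boundary segment present, in \cite[Thm.\ 13]{Wei07}, and I would follow those rather than reproduce every case here.
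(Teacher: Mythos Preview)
The paper does not actually prove this lemma: it simply states that it ``can be deduced from the proof of Greendlinger Lemma \cite[Thm.\ 4.5]{LS77}'' and that ``a direct proof of (a generalization of) the lemma can be found in \cite[Thm.\ 13]{Wei07}'', suppressing all details. Your proposal ends in exactly the same place --- you sketch the curvature/Gauss--Bonnet set-up and the peeling induction, and then explicitly say you would follow \cite[Thm.\ 4.5]{LS77} and \cite[Thm.\ 13]{Wei07} for the actual case analysis. So at the level of what is being \emph{claimed as proved here}, you and the paper agree: both defer to the same external sources.

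That said, one step of your sketch is not quite right as written and would need repair if you ever tried to make it self-contained. In the inductive step you peel off an ``exceptional'' region $D$ whose outer boundary meets $\xi$ or straddles a corner, obtain $M'$, and then say ``if $M'$ is thin then $M$ is thin or $D$ itself is the region of alternative~2''. But $D$ was \emph{chosen} so that its outer boundary violates (a)/(b), so $D$ cannot serve as the witness; and re-gluing $D$ to a thin $M'$ does not obviously yield a $(\mu\xi,\sigma)$-thin $M$ either (regions of $M'$ that met $\xi'=\delta$ now meet $D$ rather than the boundary, and may fail the thin condition). The actual argument in \cite{Wei07} handles this by a global counting of Greendlinger regions versus distinguished boundary arcs rather than by peeling a single exceptional region, which is why the reference is needed. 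Since you ultimately defer to that reference anyway, this is a wrinkle in the motivation rather than a fatal error.
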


Equipped with these two results we can now give the proof of Proposition \ref{prop:condForThin}.

\begin{proof}[Proof of Proposition \ref{prop:condForThin}]
By Lemma \ref{lem:goodVanKampenDiagK23} every inner edge in $M$ is labelled by a piece and thus $M$ is a $C(7)$ diagram. Assume that $M$ is not a $(Wa,U)$-thin diagram. By Lemma \ref{lem:Greendlinger} there is a boundary region $D$ in $M$ such that $D$ has at most
three neighbors and the outer boundary of $D$ is contained in $W$ or in $V$. Suppose the boundary of $D$ is $\rho\delta^{-1}$ where both $\rho$ and $\delta$ are positively labelled. Since the outer boundary of $D$ is contained in $W$ or in $V$ we get that one of the parts, $\rho$ or $\delta$, of the boundary of $D$ is completely contained in the inner boundary of $D$. Assume w.l.o.g. that $\delta$ is contained in the inner boundary of $D$. Let $V$ be the label of $\delta$ (this is a defining word). Using Lemma \ref{lem:RgnsInC3Diag} and the $K_3^2$ condition we have that $3 \leq \lp{V}\leq N_\delta^D \leq 3$. Consequently, $\delta$ is exactly the inner boundary of $D$ (because it has exactly three neighbors so all the neighbors of $D$ intersect with $\delta$ and not with $\rho$). As a result, $\rho$ does not contain inner edges. This proves the proposition.
\end{proof}

We finish the section on diagrams with the next lemma. The lemma characterizes the structure of thin equality diagrams over presentations which satisfy the conditions of the main theorem and another technical condition (one which later we can assume).

\begin{lemma} \label{lem:essencialVert}
Let $\P$ be a semigroup presentation which satisfy conditions $K_3^2$ and \emph{(\dag)} of the main theorem and let $M$ be a $(\mu\xi,\sigma)$-thin diagram over this presentation. We will assume that $\xi$ is empty or the label of $\xi$ is a generator. We will also assume the following technical condition:
\begin{enumerate}
	\item[\emph{(\ddag)}] Suppose $D$ is a region in $M$ with boundary path $\delta\rho^{-1}$ such that $\delta$ and $\rho$ are (positively) labelled by $V_\delta$ and $V_\rho$ and $\lp{V_\delta}>\lp{V_\rho}$. Then, $\delta$ is not a subpath of $\mu$ and is not a subpath of $\sigma$.
\end{enumerate}
Then:
\begin{enumerate}
 \item \label{lem:essencialVert:vt} If $\nu$ is a vertex of $\mu$ of valence at least three that is not a vertex of $\sigma$ then $\nu$ is of valence exactly three. Specifically, if $D$ is a region of $M$ then $\partial D \cap \mu$ and $\partial D \cap \sigma$ both contain an edge.

 \item \label{lem:essencialVert:lt} If $D$ is a region in $M$ then the label of $\partial D \cap \mu$ has piece-length at least two.

 \item \label{lem:essencialVert:br} If $D$ is a region in $M$ that has at most one neighbor, and its boundary does not contain $\xi$. Then, the piece-length of the label of $\partial D \cap \mu$ is at least three.

 \item \label{lem:essencialVert:gr} Suppose that $D_1$ and $D_2$ are two neighboring regions and let $V_1$ and $V_2$ be the labels of $\partial D_1 \cap \mu$ and $\partial D_2 \cap \mu$, respectively. The word $W=V_1 V_2$ has the property that if $U$ is a prefix of $W$ which is a subword of defining word then $|U|\leq |V_1|$ (we will later denote such decomposition of $W$ as left-greedy decomposition).

\end{enumerate}
\end{lemma}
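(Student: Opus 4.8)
The plan is to first pin down the ladder shape of the thin diagram $M$ and then read off the four statements by bookkeeping piece-lengths. Assume $M$ minimal, so that Lemmas \ref{lem:goodVanKampenDiagK23} and \ref{lem:RgnsInC3Diag} apply. Since every region has at most two neighbours, the regions of $M$ form a chain $D_1,\dots,D_m$ with $D_j$ and $D_{j+1}$ sharing a single arc $\gamma_j$, whose label is one piece by Lemma \ref{lem:goodVanKampenDiagK23}. The key point is that no region is pinched against $\mu$: if $\partial D$ met $\mu$ in a single vertex $v$ — which, since the two end-vertices of $\mu$ lie on $\sigma$ or on $\xi$ and can be treated directly, we may take to be an interior vertex of $\mu$ — then the two boundary edges of $D$ at $v$ are inner, each lying in a single-piece arc shared with a neighbour, so $\partial D$ is a triangle with apex $v$, base a subpath of $\sigma$, and the two remaining sides single pieces; writing $\partial D=\rho\delta^{-1}$ with $\rho,\delta$ positive then forces one of $\rho,\delta$ to be a product of at most two of these three arcs, hence of piece-length at most $2$, contradicting $K_3^2$(b). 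The mirror argument shows $\partial D\cap\sigma$ also always contains an edge, which is the second assertion of part \ref{lem:essencialVert:vt}; the valence count follows, because a vertex of $\mu$ of valence at least $3$ that is not on $\sigma$ and not an end-vertex must be the top vertex of some $\gamma_j$, where exactly three edges meet (the last edge of $\mu_{D_j}:=\partial D_j\cap\mu$, the first edge of $\mu_{D_{j+1}}$, and the first edge of $\gamma_j$). Once pinching is excluded, the ladder decomposition is exact: for an interior $D_j$, $\partial D_j$ splits as $\rho_j=\mu_{D_j}\gamma_j^{\downarrow}$ and $\delta_j=\gamma_{j-1}^{\downarrow}\sigma_{D_j}$ (each shared arc traversed into the diagram), with $\{V_{\rho_j},V_{\delta_j}\}$ the two sides of a defining relation; at either end one side of the relation is an entire boundary arc, on $\mu$ or on $\sigma$.

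Part \ref{lem:essencialVert:lt} is then immediate for interior $D_j$: the label of $\mu_{D_j}$ is $V_{\rho_j}$ with one piece removed, and $\lp{V_{\rho_j}}\ge 3$ by $K_3^2$(b), so $\lp{\mathrm{label}(\mu_{D_j})}\ge 2$; for the two end regions it follows from part \ref{lem:essencialVert:br}, which I would prove next. So let $D$ have at most one neighbour, with boundary not containing $\xi$, and let $\rho$ be the side of its relation running along $\mu$, so that $\partial D\cap\mu$ is $\rho$ with at most one extra piece. If $\partial D\cap\mu=\rho$ then $\lp{\mathrm{label}(\partial D\cap\mu)}=\lp{V_\rho}\ge 3$ by $K_3^2$(b). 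Otherwise the opposite side $\delta$ is a subpath of $\sigma$, so $(\ddag)$ gives $\lp{V_\delta}\le\lp{V_\rho}$, and then $(\dag)$ gives $2\lp{V_\rho}\ge\lp{V_\rho}+\lp{V_\delta}\ge 7$, hence $\lp{V_\rho}\ge 4$ and $\lp{\mathrm{label}(\partial D\cap\mu)}\ge 3$.

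For part \ref{lem:essencialVert:gr}, order $D_1,D_2$ so that $D_1$ lies to the left of $D_2$; let $P$, with first letter $p_1$, be the label of their common arc $\gamma$, with top vertex $\nu$, so $V_1=\mathrm{label}(\mu_{D_1})$ ends at $\nu$ and $V_2=\mathrm{label}(\mu_{D_2})$ begins at $\nu$, say with the letter $x$. The ladder decomposition gives $V_1P\in\R$ (the side of $D_1$'s relation along $\mu_{D_1}$), while the two sides of $D_2$'s relation begin respectively with $x$ and with $p_1$, so $x\ne p_1$ by $K_3^2$(a). Suppose some prefix $U$ of $W=V_1V_2$ with $|U|>|V_1|$ is a subword of a defining word; then so is its prefix $V_1x$, say a subword of $W'\in\R$. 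Since $\lp{V_1}\ge 2$ by part \ref{lem:essencialVert:lt}, Observation \ref{obs:twoPiecUniq} forces $W'=V_1P$, the unique defining word containing $V_1$. But $V_1x$ is not a prefix of $V_1P$ (that would give $x=p_1$), nor can it occur later in $V_1P$ (that would place $V_1$ at two positions of the defining word $V_1P$, making $V_1$ a piece and forcing $\lp{V_1}=1$). This contradiction gives $|U|\le|V_1|$.

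The step that needs the most care is the first one: making precise the chain structure, the fact that shared arcs are single pieces, the exclusion of pinched regions, and above all the orientation convention telling us which side of each region's relation lies on $\mu$ — although much of this is standard for thin diagrams over $C(7)$-type presentations. Granting that, parts \ref{lem:essencialVert:lt}--\ref{lem:essencialVert:gr}, and in particular the use of Observation \ref{obs:twoPiecUniq} for part \ref{lem:essencialVert:gr}, are short.
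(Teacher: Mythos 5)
The overall approach (chain structure of thin diagrams, excluding pinching via the triangle argument, then piece-length bookkeeping) is the same as the paper's, and parts~\ref{lem:essencialVert:vt}, \ref{lem:essencialVert:br}, and \ref{lem:essencialVert:gr} are essentially correct modulo the orientation bookkeeping you flag at the end.

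However, there is a genuine gap, and it is exactly the point you dismiss as ``standard.'' Your ladder decomposition posits $\rho_j=\mu_{D_j}\gamma_j^{\downarrow}$ and $\delta_j=\gamma_{j-1}^{\downarrow}\sigma_{D_j}$, i.e.\ each interior region has one shared arc on its $L$-side and one on its $R$-side. This is not forced. The orientation of each shared arc (whether its positively-oriented direction runs $\mu\to\sigma$ or $\sigma\to\mu$) can vary from arc to arc, subject only to the no-inner-source/sink condition, which constrains nothing about how the arc's two endpoints sit on $\mu$ and $\sigma$. In particular a region $D_j$ can have \emph{both} of its shared arcs on the $L$-side, giving $\rho_j=\gamma_{j-1}\,\mu_{D_j}\,\gamma_j$ and $\delta_j=\sigma_{D_j}$. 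In that configuration your computation for part~\ref{lem:essencialVert:lt} yields only $\lp{\text{label}(\mu_{D_j})}\ge\lp{V_{\rho_j}}-2\ge 1$, not $\ge 2$. This is precisely where the hypotheses (\dag) and (\ddag) are indispensable, and your proof of part~\ref{lem:essencialVert:lt} uses neither: if $\lp{\text{label}(\mu_{D_j})}\le 1$ then $\lp{V_{\rho_j}}\le 3$, so (\dag) forces $\lp{V_{\delta_j}}\ge 4>\lp{V_{\rho_j}}$, and since $\delta_j$ is a subpath of $\sigma$ this contradicts (\ddag). The paper's proof lists all four orientation cases ($V_uV_r=V_\ell V_d$, $V_\ell V_u=V_dV_r$, $V_u=V_\ell V_dV_r$, $V_\ell V_uV_r=V_d$); the first three are handled by $K_3^2$ alone, but the fourth is exactly the one your ladder assumption rules out by fiat, and it is the raison d'\^etre of both (\dag) and (\ddag) in this lemma. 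A proof of part~\ref{lem:essencialVert:lt} that never invokes (\dag) or (\ddag) cannot be complete here. The same orientation caveat touches part~\ref{lem:essencialVert:gr} (your claim ``$V_1P\in\R$'' should be ``$V_1P$ is a subword of the unique defining word containing $V_1$,'' since $\rho_{D_1}$ may equal $\gamma'V_1P$), but there the argument survives because uniqueness via Observation~\ref{obs:twoPiecUniq} is all that is used; for part~\ref{lem:essencialVert:lt} it does not survive.
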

\begin{proof}
We prove the different parts one by one:
\begin{enumerate}

\item See figure \ref{fig:vertValanceThree}. Assume by contradiction that $\nu$ is a vertex of $\mu$ of valence bigger than three which is not a vertex of $\sigma$. In this case there is a region $D$ with two inner edges that are adjacent to $\nu$. Thus, if $\partial D = \rho \delta^{-1}$ such that both $\rho$ and $\delta$ are labelled by positive words then one of them would have piece-length at most two (because the diagram is thin so $D$ has at most two neighbors and the inner parts of $\partial D$ are labelled by pieces). This contradicts the $K_3^2$ condition.

\begin{figure}[ht]
\centering
\includegraphics[totalheight=0.18\textheight]{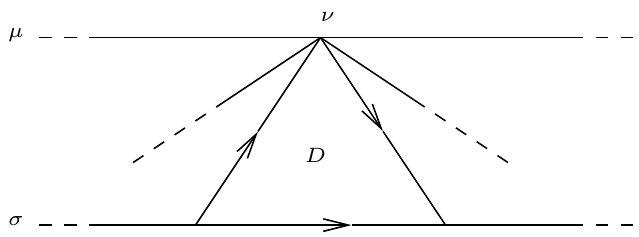}
\caption{An impossible situation where a vertex has valence
greater than three} \label{fig:vertValanceThree}
\end{figure}

\item See figure \ref{fig:maxTwoVert}. Let $D$ be a region in $M$. We denote by $\omega_u$, $\omega_d$, $\omega_\ell$, and $\omega_r$ the four sides of $D$ such that $\omega_u = \partial D \cap \mu$, $\omega_d = \partial D \cap \sigma$ and the inner boundary of $D$ consists of $\omega_\ell$ and $\omega_r$ (they may be empty and $\omega_r$ may equal $\xi$). Denote by $V_u$, $V_d$, $V_\ell$, and by $V_r$ the labels of $\omega_u$, $\omega_d$, $\omega_\ell$, and $\omega_r$, respectively. Clearly, $\lp{V_\ell}\leq1$ and $\lp{V_r}\leq1$ (since they are pieces or empty). We need to show that $\lp{V_u}\geq2$. Assume otherwise by contradiction, namely, that $\lp{V_u}\leq1$. Depending on $V_\ell$ and $V_r$ being positive or negative words, we have that one of the following is a defining relation in $S$:
\[
V_u V_r = V_\ell V_d, \quad V_\ell V_u = V_d V_r, \quad V_u = V_\ell V_d V_r, \quad V_\ell V_u V_r = V_d,
\]
The first three cannot be a defining relations since the left side decomposes into less than three pieces and thus violate the $K_3^2$ condition. Hence, $V_\ell V_u V_r = V_d$. But, $\lp{V_\ell V_u V_r} \leq 3$ and so by condition $(\dag)$ of the main theorem we have that $\lp{V_d}\geq4$. This contradicts assumption (\ddag).

\begin{figure}[ht]
\centering
\includegraphics[totalheight=0.18\textheight]{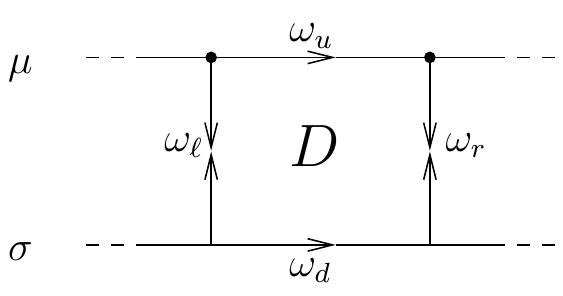}
\caption{The pieces length of $\partial D \cap \mu$} \label{fig:maxTwoVert}
\end{figure}

\item See Figure \ref{fig:oneNeiReg}. Assume the boundary of $D$ does not contain $\xi$ and suppose $D$ is a region that has at most one neighbor. The boundary of $D$ decomposes into three parts: $\omega_\mu = \partial D \cap \mu$, $\omega_\sigma = \partial D \cap \sigma$, and, possibly empty, inner part $\omega_{in}$ (which, if not empty, is labelled by a piece). We need to show that the piece-length of $\omega_\mu$ is at least three. Assume otherwise by contradiction. If follows from the $K_3^2$ condition that either $\omega_\mu \omega_{in}$ or $\omega_{in} \omega_\mu$ is positively labelled. Assume w.l.o.g. that $\omega_\mu \omega_{in}$ is positively labelled. By assumption $\omega_\mu \omega_{in}$ has a piece-length at most three. Thus, by condition (\dag) of the main theorem we get that $\omega_\sigma$ is labelled by a positive label of piece-length at least four. This contradicts assumption (\ddag).

\begin{figure}[ht]
\centering
\includegraphics[totalheight=0.18\textheight]{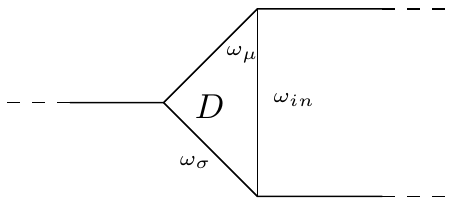}
\caption{A region with at most single neighbor} \label{fig:oneNeiReg}
\end{figure}

\item See Figure \ref{fig:leftGreedy}. Let $\delta_1 = \partial D_1 \cap \mu$, $\delta_2 = \partial D_2 \cap \mu$, and $\omega = \partial D_1 \cap \partial D_2$. Assume that $i(\omega)$, the first vertex of $\omega$, is a vertex of $\mu$. Suppose by contradiction that there is a prefix $U$ of $W$ which is a subword of a defining word and $|U| > |V_1|$. Thus, we can decompose $\delta_2$ into $\delta_2 = \delta_2'\delta_2''$ such that the label of $\delta_1\delta_2'$ is $U$. Since $\lp{V_1}\geq2$ we have Observation \ref{obs:twoPiecUniq} that there is a unique defining word $R$ such that $V_1$ is its subword. Thus, $U$ is also a subword of $R$. Consequently, $U$ is a prefix of the label of $\delta_1\omega$. We get that $\delta_2$ and $\omega$ are positively labelled and start with the same generator. This is a contradiction to the $K_3^2$ condition.

\begin{figure}[ht]
\centering
\includegraphics[totalheight=0.18\textheight]{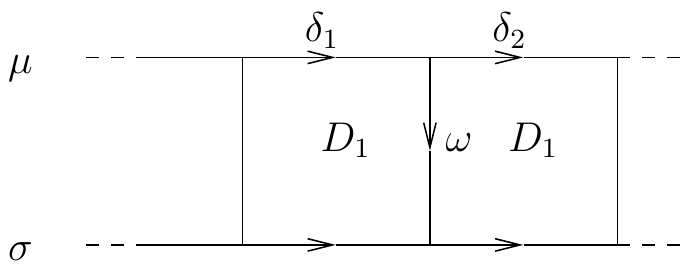}
\caption{Left greedy decomposition of the labels} \label{fig:leftGreedy}
\end{figure}

\end{enumerate}

\end{proof}

\section{Proof of Main Theorem} \label{sec:proofMainThm}

In this section we prove Theorem \ref{thm:mainThm}. For the rest of this section fix a presentation $\P = \Pres{X| L_1=R_1, \ldots, L_n=R_n}$ for a semigroup $S$ for which the conditions of the main theorem hold (conditions $K_3^2$ and (\dag)). As suggested by Theorem \ref{thm:falseification}, we will prove automaticity by producing a regular order on some generating set. We will assume that any generator $x \in X$ appears in one of the defining relations. If that doesn't happen then we can split $S$ as $S=S'*F$ where $S'$ has this property, the conditions of the main theorem hold for $S'$, and $F$ is a finitely generated free semigroup. Since a free product of automatic semigroups is automatic \cite[Thm. 6.1]{CRRT01} it is enough to prove the theorem for $S'$.

We start by defining the generating set we will be working with. Let $\Sb$ be the (finite) set of subwords of the elements in $\R=\R(\P)=\Set{L_1,\ldots,L_n} \cup \Set{R_1,\ldots,R_n}$ and let $\Phi=\Set{\varphi_W | W\in\Sb}$. In other words, $\Phi$ is a set of symbols which corresponds to the subwords of elements in $\R$. Let $\pi:\Phi^* \to S$ be the natural map for which $\pi(\varphi_W)$ is the element that $W$ presents in $S$. By our assumption above we have that $X\subseteq\Sb$ and thus the set $\pi(\Phi)$ is a generating set for $S$. Our automatic structure will be a subset of $\Phi^*$ and it will be constructed by defining an order on the words in $\Phi^*$. We write $d_\Phi(\cdot,\cdot)$ to denote the induced metric on the Cayley graph of $S$ under the new generating set (see the end of Section \ref{sec:preliminaries}). The symbols $A$, $B$ and $C$ will denote elements of $\Phi^*$ and the symbols $U$, $V$, $W$ will denote elements of $X^*$. If $A \in \Phi^*$ then there are elements $W_1,\ldots,W_n$ of $\Sb$ such that $A=\varphi_{W_1}\cdots\varphi_{W_n}$. In this case we will use the notation $\eta(A)$ to denote the word $W_1\cdots W_n$ in $X^*$. Thus, $\pi(A)=\overline{\eta(A)}$.

Recall from property (c) of the $K_3^2$ condition that if $W \in \R$ is a defining word then there is a \emph{unique} defining word $U \in \R$ such that $W=U$ or $U=W$ is a defining relation. In this case we say that $U$ is the \emph{complement} of $W$. We denote the complement of $W$ by $\Comp{W}$. Note that $\Comp{\Comp{W}}=W$.

We next define for each word in $\Phi^*$ an auxiliary vector. These vectors will be used to define an order on $\Phi^*$.

\begin{definition}[Auxiliary vector for $\Phi^*$] \label{def:auxVect}
Let $A=\varphi_{W_1}\cdots\varphi_{W_n}$ be a word in $\Phi^*$. We define the vector $\kappa_A\in\Set{0,1}^n$ attached to $A$ (i.e., $\kappa_A$ is vector of length $n$ with zero/one entries). The entries of $\kappa_A$ are defined as follows: the $i$-th coordinate of $\kappa_A$ is \emph{one} if and only if there are decompositions $W_{i-1}=W_{i-1}'W_{i-1}''$ and $W_{i+1}=W_{i+1}'W_{i+1}''$ such that $W_{i-1}'' W_i W_{i+1}' \in \R$ and $\lp{ W_{i-1}'' W_i W_{i+1}' } > \lp{ \Comp{W_{i-1}'' W_i W_{i+1}'} }$. To complete the definition we need to define $W_0$ and $W_{n+1}$ so we set $W_0=W_{n+1}=\eps$ (where $\eps$ is the empty word).
\end{definition}

To give some intuition, the vector $\kappa_A$ marks these points in $\eta(A)$ that are ``inefficient'' in the number of pieces.  We next define an order on $\Phi^*$ which is based the auxiliary vectors.

\begin{definition}[Piefer order ``$\prec$'']
Let $A$ and $B$ be two elements of $\Phi^*$. We write $A \prec B$ (read: `$A$ precedes $B$ in the Piefer order') if $\kappa_A=\kappa_B$ or $\kappa_A$ precedes $\kappa_B$ in shortlex order.
\end{definition}

Note that, for example, if $|A|<|B|$ then $A \prec B$. Note also that the order ``$\prec$'' is regular. An important property of the order ``$\prec$'' is that for any $s\in S$ there is a ``$\prec$''-minimal element $A$ such that $A$ presents $s$ (see the paragraph after Theorem \ref{thm:falseification} for the definition of ``$\prec$''-minimal). This follows from the fact that shortlex ordering is a well ordering.

The proof of the main theorem will be completed if we establish that conditions (R) and (FT) of Theorem \ref{thm:falseification} hold for the order ``$\prec$''. The proof of these two properties occupies the rest of this section. Consider an element $A \in \Phi^*$ that is not minimal according to the order ``$\prec$''. Suppose another element $B \in \Phi^*$ has the following three properties:
\begin{enumerate}
	\item $\pi(A)=\pi(B)$
	\item $B \prec A$
	\item $A$ and $B$ are $k$-fellow-travellers.
\end{enumerate}
In this case, following the terminology in \cite{Pei96}, we will say that ``\emph{$B$ $k$-refute $A$}''. To show condition (R) we need to show that for any element $A$ that is not minimal according to the order ``$\prec$'' we have some element $B$ that $k$-refute $A$. We start with two basic definitions for words over $\Phi$:

\begin{definition}[Admissible] \label{def:admissible}
We say that $A=\varphi_{W_1}\cdots\varphi_{W_n} \in \Phi^*$ is admissible if for all $1 \leq i < n$ we have $W_i W_{i+1} \notin \Sb$.
\end{definition}

\begin{definition}[Efficient Words in $\Phi^*$] \label{def:efficient}
We say that $A \in \Phi^*$ is \emph{efficient} if it is admissible and $\kappa_A$ is a zero vector (i.e., all its entries are zero). Elements of $\Phi^*$ which are not efficient will be called \emph{inefficient}.
\end{definition}

To distinguish between zero and non-zero vectors so we adopt the notation $\kappa_A=\overline{0}$ to denote that $\kappa_A$ is a zero vector (of some length) and $\kappa_A \neq \overline{0}$ when $\kappa_A$ is not all zeros. Also, to refer to the coordinates of the vector $\kappa_A$ we will use the notations $\cord{\kappa_A}{i}$ which will denote the $i$-th coordinate of the vector.

A technical observation is that condition (R) holds for all inefficient elements of $\Phi^*$. This is stated in the following proposition. 

\begin{proposition} \label{prop:inefficientCanRefute}
Let $A \in \Phi^*$. If $A$ is inefficient then $A$ can be $3$-refuted.
\end{proposition}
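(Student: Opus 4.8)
The plan is to use Theorem~\ref{thm:falseification}: it suffices, given an inefficient $A=\varphi_{W_1}\cdots\varphi_{W_n}$, to produce $B\in\Phi^*$ with $\pi(B)=\pi(A)$, with $\kappa_B$ strictly preceding $\kappa_A$ in shortlex order (so $B\prec A$ and $A$ is not ``$\prec$''-minimal), and with $A,B$ being $2$-fellow-travellers for $d_\Phi$, hence $3$-fellow-travellers. All the fellow-traveller estimates rest on one observation: a single letter of $\Phi$ moves a vertex of the Cayley graph $d_\Phi$-distance at most one, so it is enough to check that corresponding prefixes of $A$ and $B$ always have $\pi$-images differing by right multiplication by the image of a short word over $\Phi$. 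Split on admissibility. If $A$ is \emph{not} admissible, say $W_iW_{i+1}\in\Sb$, put $B=\varphi_{W_1}\cdots\varphi_{W_{i-1}}\,\varphi_{W_iW_{i+1}}\,\varphi_{W_{i+2}}\cdots\varphi_{W_n}$; then $\eta(B)=\eta(A)$ so $\pi(B)=\pi(A)$, and $|B|=|A|-1$ so $\kappa_B$ is shorter than $\kappa_A$ and precedes it. Tracking prefixes, the length-$m$ prefix of $B$ has image $\overline{W_1\cdots W_{m'}}$ with $m'\in\{m,m+1\}$, differing from $\overline{W_1\cdots W_m}$ by the single generator $\varphi_{W_{m+1}}$ (valid since $W_{m+1}\in\Sb$), so $A,B$ are $1$-fellow-travellers.

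Now suppose $A$ is admissible but $\kappa_A\neq\overline{0}$. Let $i$ be the least index with $\cord{\kappa_A}{i}=1$ and fix witnessing decompositions $W_{i-1}=W_{i-1}'W_{i-1}''$, $W_{i+1}=W_{i+1}'W_{i+1}''$, so $R\define W_{i-1}''W_iW_{i+1}'\in\R$ and $\lp{R}>\lp{\Comp{R}}$. With $R'=\Comp{R}$ set
\[
B=\varphi_{W_1}\cdots\varphi_{W_{i-2}}\,[\varphi_{W_{i-1}'}]\,\varphi_{R'}\,[\varphi_{W_{i+1}''}]\,\varphi_{W_{i+2}}\cdots\varphi_{W_n},
\]
a bracketed letter omitted if its word is empty (note $W_{i-1}',W_{i+1}''\in\Sb$ and $R'\in\R\subseteq\Sb$). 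Since $R'\eqb{S}R$ we get $\pi(B)=\pi(A)$ and $|B|\le|A|$. If $|B|<|A|$ we are done, $\kappa_B$ being shorter, with fellow-traveller bound at most $2$ (the value $2$ only where the detour crosses some pair $W_jW_{j+1}\notin\Sb$), again by prefix-tracking. So assume $|B|=|A|$, i.e.\ $W_{i-1}'\neq\eps$ and $W_{i+1}''\neq\eps$. The fellow-traveller bound remains routine ($A,B$ agree on their first $i-2$ letters and beyond the altered window, and using $\overline{R'}=\overline{W_{i-1}''W_iW_{i+1}'}$ the window prefixes differ by at most $\varphi_{W_{i-1}''}$ or $\varphi_{W_{i+1}'}$). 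The substance is $\kappa_B<_{\text{lex}}\kappa_A$; since both have length $n$, it suffices to show (a) $\cord{\kappa_B}{j}=\cord{\kappa_A}{j}\;(=0)$ for every $j<i$, and (b) $\cord{\kappa_B}{i}=0$.

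Claim (b) is easy: $\cord{\kappa_B}{i}=1$ would give $QR'S\in\R$ for a suffix $Q$ of $W_{i-1}'$ and a prefix $S$ of $W_{i+1}''$ with $\lp{QR'S}>\lp{\Comp{QR'S}}$; but $\lp{R'}\ge3$ and Observation~\ref{obs:twoPiecUniq} force $QR'S=R'$, hence $\lp{R'}>\lp{\Comp{R'}}=\lp{R}$, contradicting $\lp{R}>\lp{R'}$. For (a), the coordinates with $j\le i-3$ depend only on letters common to $A$ and $B$, and $\cord{\kappa_B}{i-2}=0$ follows from $\cord{\kappa_A}{i-2}=0$: $B$'s letter at position $i-1$ is the prefix $W_{i-1}'$ of $A$'s letter $W_{i-1}$, so any witness for $\cord{\kappa_B}{i-2}=1$ is a witness for $\cord{\kappa_A}{i-2}=1$.

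The crux — and the step I expect to be the main obstacle — is $\cord{\kappa_B}{i-1}=0$. A witness to the contrary would be a defining word $\hat R=Y\,W_{i-1}'\,Z$ with $Y$ a suffix of $W_{i-2}$, $Z$ a prefix of $R'$, and $\lp{\hat R}>\lp{\Comp{\hat R}}$; with condition~(\dag) this forces $\lp{\hat R}\ge4$. One first checks $\lp{Z}\le1$ (otherwise $\hat R=R'$ by Observation~\ref{obs:twoPiecUniq} and $Z$ is a proper border of $R'$, hence a piece), so $\lp{Y\,W_{i-1}'}\ge3$; and $\lp{Y}\le1$ unless $Y=W_{i-2}$ is a prefix of every defining word containing it. One then runs the usual small-overlap bookkeeping: $W_{i-1}'$ is a prefix of $W_{i-1}$, so when $W_{i-1}''\neq\eps$ Observation~\ref{obs:sharedPieceLP1} (applied to $W_{i-1}=W_{i-1}'W_{i-1}''$ and to $W_{i-2}$, using the admissibility of $A$) pins down the defining word containing $W_{i-1}$ and bounds $\lp{W_{i-1}''}$ and $\lp{Y}$; propagating these identifications together with $K_3^2$(b) and~(\dag) yields either a witness for $\cord{\kappa_A}{j}=1$ with some $j<i-1$ — impossible by minimality of $i$ — or a violation of $K_3^2$. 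Essentially all the content of the proposition sits in this case analysis; everything else is bookkeeping.
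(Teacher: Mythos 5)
The single step you flag as ``the crux'' is exactly where the proof breaks, and the gap is not one that the sketched ``small-overlap bookkeeping'' can close. After fixing $A$ at the minimal index $i$, a witness for $\cord{\kappa_B}{i-1}=1$ is a defining word $\hat R = Y\,W_{i-1}'\,Z$ with $Y$ a suffix of $W_{i-2}$ and $Z$ a prefix of $R'=\Comp{W_{i-1}''W_iW_{i+1}'}$. Your own reasoning (uniqueness from Observation~\ref{obs:twoPiecUniq} plus $K_3^2$(a)) forces $W_{i-1}''=\eps$, so $W_{i-1}'=W_{i-1}$ and $R'=\Comp{W_iW_{i+1}'}$. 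But then $Z$ is a prefix of $R'$, which by $K_3^2$(a) begins with a \emph{different} generator than $R=W_iW_{i+1}'$ and hence than $W_i$. So for $Z\neq\eps$ the word $\hat R$ is not of the form ``suffix of $W_{i-2}$, then $W_{i-1}$, then prefix of $W_i$'' and therefore is \emph{not} a witness for $\cord{\kappa_A}{i-1}=1$; it produces no contradiction with minimality of $i$, and no $K_3^2$ violation either. In other words, fixing at position $i$ genuinely can create a new inefficiency at position $i-1$, because the new neighbour $R'$ supplies prefixes that $W_i$ never had. Your claim ``$\cord{\kappa_B}{i-1}=0$'' is therefore false in general.

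This is precisely why the paper does not attempt a one-shot fix. Its proof of Proposition~\ref{prop:partOfPacingPair} is an induction on the index $\ell$: if the fix at $\ell$ manufactures $\cord{\kappa_B}{\ell-1}=1$, it recursively fixes $B$ at $\ell-1$, and so on down the word. The whole point of the pacing-pair conditions (Definition~\ref{def:pacingPair}) is to control this cascade: condition \texttt{C5} tracks the shortlex decrease through the chain, while condition \texttt{C3} keeps $d_\Phi(A(j),C(j))\leq2$ at every intermediate prefix even though $A$ and the final $C$ may disagree over a long window $[j,\ell]$. That uniform bound is not automatic --- naively composing fellow-traveller estimates along a chain of $O(\ell)$ single-letter fixes would only give an $O(\ell)$ bound. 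Your sketch keeps the overall architecture (non-admissible case via Lemma~\ref{lem:adjoingWhenInS}, admissible case by replacing $W_{i-1}''W_iW_{i+1}'$ with its complement, checking the coordinates $j\leq i-2$ as in Lemma~\ref{lem:kappaRedAftrConstr}, and $\cord{\kappa_B}{i}=0$ as in Lemma~\ref{lem:contRedKappa}), but the hard content is the cascade, and that is the part left to a gesture.
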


Here is the proof of Proposition \ref{prop:inefficientCanRefute} for the case that $A$ is not admissible. The rest of the proof of the proposition is left to the next sub-section.

\begin{lemma}\label{lem:adjoingWhenInS}
Suppose $A=\varphi_{W_1}\cdots\varphi_{W_n} \in \Phi^*$ is not admissible. Then, there is an element $B$ that $1$-refutes $A$. Moreover, we have that $|B|<|A|$ and $\eta(A)=\eta(B)$.
\end{lemma}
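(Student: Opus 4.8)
The plan is to obtain $B$ by simply fusing the two offending letters of $A$. Since $A$ is not admissible there is an index $1\le i<n$ with $W_i W_{i+1}\in\Sb$, so $\varphi_{W_i W_{i+1}}$ is a genuine letter of $\Phi$, and I would set
\[
B=\varphi_{W_1}\cdots\varphi_{W_{i-1}}\,\varphi_{W_i W_{i+1}}\,\varphi_{W_{i+2}}\cdots\varphi_{W_n}.
\]
Then $\eta(B)=W_1\cdots W_{i-1}(W_i W_{i+1})W_{i+2}\cdots W_n=\eta(A)$, whence $\pi(A)=\pi(B)$, and $|B|=n-1<|A|$. By the remark following the definition of the Piefer order (a strictly shorter word always precedes a longer one) this gives $B\prec A$. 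So the only thing left to check is that $A$ and $B$ are $1$-fellow-travellers with respect to the induced metric $d_\Phi$.

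For the fellow-traveller bound I would compare the two words prefix by prefix. Write $a_j=\varphi_{W_j}$; since $\eta$ carries concatenation in $\Phi^*$ to concatenation in $X^*$ we have $\pi(\varphi_{W_i W_{i+1}})=\pi(a_i)\pi(a_{i+1})$ in $S$, hence also in the co-presented group. Using this one checks three facts about the prefixes $A(m)$ and $B(m)$: for $m<i$ one has $A(m)=B(m)$ verbatim; for $i\le m\le n-1$ one has $\pi(B(m))=\pi(A(m+1))$; and for $m\ge n$ one has $\pi(A(m))=\pi(A)=\pi(B)=\pi(B(m))$. In the first and third ranges the two prefixes present the same element, so their $d_\Phi$-distance is $0$; in the middle range $\pi(A(m))$ and $\pi(A(m+1))=\pi(B(m))$ are the two endpoints of the edge of the Cayley graph labelled by the generator $a_{m+1}$, so their distance is at most $1$. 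Hence $d_\Phi(A(m),B(m))\le 1$ for every $m$, i.e.\ $A$ and $B$ are $1$-fellow-travellers. Combining $\pi(A)=\pi(B)$, $B\prec A$, and this last property, $B$ $1$-refutes $A$, and by construction $|B|<|A|$ and $\eta(B)=\eta(A)$, as required.

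I do not expect a genuine obstacle here; the only point deserving a line of care is the step ``consecutive prefixes of a word differ by one generator, hence their images are at $d_\Phi$-distance at most one''. This uses that $d_\Phi$ is the restriction, to the copy of $S$, of the (left-invariant) word metric on the Cayley graph of the co-presented group with respect to the new generating set, which is legitimate precisely because $X\subseteq\Sb$ ensures that the image of $\Phi$ generates the co-presented group. Everything else is routine bookkeeping with prefixes.
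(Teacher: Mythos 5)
Your argument is correct and essentially identical to the paper's proof: both fuse the offending adjacent pair into a single $\Phi$-letter, note that the strictly shorter word precedes $A$ in the Piefer order and presents the same element, and then verify $1$-fellow-travelling by comparing prefixes, which present elements differing by at most one $\Phi$-generator. If anything your bookkeeping is cleaner --- the paper's proof has an off-by-one slip, taking $W_\ell W_{\ell+1}\in\Sb$ but then fusing $\varphi_{W_{\ell-1}}\varphi_{W_\ell}$, whereas your indices are consistent throughout.
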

\begin{proof}
Since $A$ is not admissible there is an index $1\leq \ell < n$ such that $W_\ell W_{\ell+1} \in \Sb$. Construct $B$ from $A$ by replacing the two consecutive generators $\varphi_{W_{\ell-1}} \varphi_{W_{\ell}}$ with the generator $\varphi_{W_{\ell-1} W_{\ell}}$. Namely, 
\[
B=\varphi_{W_1} \cdots \varphi_{W_{\ell-2}} \, \varphi_{W_{\ell-1} W_{\ell}} \, \varphi_{W_{\ell+1}} \cdots \varphi_{W_n}
\]
Then, $|B|<|A|$ implying that $B\prec A$. Clearly we have that $\eta(B)=\eta(A)$ and so $\pi(A)=\pi(B)$. We finish by showing that $A$ and $B$ are $1$-fellow-travellers. Recall that $A(j)$ denotes the prefix of $A$ of length $j$, that $\pi(C)$ is the element in $S$ presented by $C$, and that $d_\Phi$ is the induced metric on the Cayley graph. Since $\pi(B(j)) = \pi(A(j))$ for all $1\leq j\leq \ell-2$ and $\pi(B(j))=\pi(A(j) \varphi_{W_{j+1}})$ for all $\ell-1\leq j\leq n-1$ we get that $d_\Phi(A(j),B(j))\leq1$ for all $1 \leq j \leq n$ so $B$ and $A$ are $1$-fellow-travellers.
\end{proof}

The next lemma shows how to check for inefficiency.

\begin{lemma} \label{lem:whenKappaNotZero}
Let $A \in \Phi^*$ be admissible. Then, $A$ is inefficient if and only if $\eta(A)$ contains a subword $W \in \R$ such that $\lp{W} > \lp{\Comp{W}}$.
\end{lemma}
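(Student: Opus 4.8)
The plan is to prove the equivalence in Lemma~\ref{lem:whenKappaNotZero} by chasing the definitions of $\kappa_A$ (Definition~\ref{def:auxVect}) and of efficiency (Definition~\ref{def:efficient}). Since $A$ is assumed admissible, $A$ is inefficient if and only if $\kappa_A \neq \overline{0}$, i.e.\ there is some index $i$ with $\cord{\kappa_A}{i} = 1$; so the whole statement reduces to showing that such an index exists exactly when $\eta(A)$ has a subword $W \in \R$ with $\lp{W} > \lp{\Comp{W}}$. Write $A = \varphi_{W_1} \cdots \varphi_{W_n}$, so $\eta(A) = W_1 \cdots W_n$, and recall the convention $W_0 = W_{n+1} = \eps$.

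For the forward direction I would take an index $i$ with $\cord{\kappa_A}{i} = 1$. By Definition~\ref{def:auxVect} there are decompositions $W_{i-1} = W_{i-1}' W_{i-1}''$ and $W_{i+1} = W_{i+1}' W_{i+1}''$ with $W := W_{i-1}'' W_i W_{i+1}' \in \R$ and $\lp{W} > \lp{\Comp{W}}$; and this $W$ is visibly a subword of $W_{i-1} W_i W_{i+1}$, hence of $\eta(A)$, which is exactly the required conclusion. This direction is essentially immediate from the definition.

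The converse is the substantive direction. Suppose $W \in \R$ is a subword of $\eta(A) = W_1 \cdots W_n$ with $\lp{W} > \lp{\Comp{W}}$. The issue is that, a priori, $W$ could straddle three or more of the blocks $W_j$, whereas the definition of $\kappa_A$ only sees a ``central'' block $W_i$ flanked by (suffix of $W_{i-1}$, prefix of $W_{i+1}$). So the heart of the argument is to show that admissibility forces $W$ to span \emph{at most} three consecutive blocks. Concretely, write the occurrence of $W$ as starting inside $W_a$ and ending inside $W_b$. If $b \geq a+3$ then $W$ contains the entire block $W_{a+1} W_{a+2}$ as a subword, so $W_{a+1} W_{a+2}$ is a subword of a defining word, i.e.\ $W_{a+1} W_{a+2} \in \Sb$, contradicting admissibility (Definition~\ref{def:admissible}). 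Hence $b \leq a + 2$. If $b = a + 2$, set $i = a+1$; then $W = W_a'' W_i W_{a+2}'$ for suitable suffix $W_a''$ of $W_a$ and prefix $W_{a+2}'$ of $W_{a+2}$, which is exactly the shape in Definition~\ref{def:auxVect}, so $\cord{\kappa_A}{i} = 1$ and $A$ is inefficient. If $b = a+1$, then $W$ is a suffix of $W_a$ followed by a prefix of $W_{a+1}$; I can realize this in the Definition~\ref{def:auxVect} format with center block $W_i$ for $i = a$ (taking $W_i = W_a$, using the empty prefix of $W_{a+1}$) or for $i = a+1$, so again some coordinate of $\kappa_A$ is $1$. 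Finally if $b = a$, i.e.\ $W$ is a subword of a single block $W_a$, I take $i = a$ with $W_{i-1}'' = W_{i+1}' = \eps$ and $W$ sitting inside $W_i$; but here I need $W$ to actually equal a suffix-times-$W_i$-times-prefix expression with the \emph{whole} of $W_i$ in the middle --- which fails if $W$ is a proper subword of $W_i$. I would handle this last sub-case by noting that since $W \in \R$ is a defining word and $W$ is a subword of the defining word $W_a$, either $W = W_a$ (fine) or $\lp{W} = 1$ by the $K_3^2$ condition (a proper occurrence of one defining word inside another makes it a piece), and $\lp{W} = 1$ together with $\lp{W} > \lp{\Comp{W}}$ forces $\lp{\Comp{W}} = 0$, impossible since complements are nonempty defining words; so in fact $W = W_a$ and $i = a$ works.

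The main obstacle, then, is precisely this bookkeeping about how many blocks the subword $W$ can meet and matching each case to the rigid three-block template of Definition~\ref{def:auxVect}; the degenerate one-block case is the only place one has to invoke $K_3^2$ rather than just admissibility. Everything else is unwinding definitions.
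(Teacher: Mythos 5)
Your overall plan is the same as the paper's: one direction is immediate from Definition \ref{def:auxVect}, and the substantive direction is to locate an occurrence of $W$ inside $\eta(A)$, use admissibility to bound how many blocks $W_j$ it can touch, and then match the rigid three-block template in the definition of $\kappa_A$. Your cases $b\geq a+3$ and $b=a+2$ are fine, and the one-block case $b=a$ is handled correctly in spirit (though note a small slip there: $W_a\in\Sb$ means $W_a$ is a \emph{subword} of some defining word, not necessarily a defining word itself; the argument still goes through by passing to that defining word $D$, getting $W$ a subword of $D$, and concluding $W=D=W_a$ or $W$ is a piece).

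The genuine gap is in the two-block case $b=a+1$. You claim you can ``realize this in the Definition \ref{def:auxVect} format with center block $W_i$ for $i=a$ or $i=a+1$,'' but for $i=a$ the template is $W=W_{a-1}''\,W_a\,W_{a+1}'$, which needs the full block $W_a$ to appear inside $W$, and for $i=a+1$ the template is $W=W_a''\,W_{a+1}\,W_{a+2}'$, which needs the full block $W_{a+1}$ inside $W$. If $W$ is a \emph{proper} suffix of $W_a$ followed by a \emph{proper} prefix of $W_{a+1}$, neither template fires, and you have not ruled this sub-case out. It can in fact be ruled out: writing $W=W_a''W_{a+1}'$, both $W_a''$ (a proper suffix of $W_a$ and a prefix of the defining word $W$) and $W_{a+1}'$ (a proper prefix of $W_{a+1}$ and a suffix of $W$) have piece-length at most one by Observation \ref{obs:sharedPieceLP1}, so $\lp{W}\leq 2$, contradicting $\lp{W}\geq 3$ from $K_3^2$ (indeed $\geq 4$ from (\dag)). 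You need this extra invocation of Observation \ref{obs:sharedPieceLP1}; admissibility alone does not cover this sub-case. The paper organizes its cases differently---by where $L$ starts relative to the block boundaries rather than by $b-a$---and always produces $i=k+1$, showing the possibly-proper prefix $T$ of $W_{k+1}$ is in fact all of $W_{k+1}$ via Observations \ref{obs:twoPiecUniq} and \ref{obs:sharedPieceLP1}; the content is the same lemma you are missing.
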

\begin{proof}
The `if' part follows from the definition of $\kappa_A$. We prove the `only if' part. Suppose $A=\varphi_{W_1}\cdots\varphi_{W_n}$. Suppose further that $\eta(A) = V_1 L V_2$ where $L \in \R$ and $L=R$ is a defining relation with the property that $\lp{L} > \lp{R}$ ($R$ is the complement of $L$). By the (\dag) condition we have that $\lp{L}\geq4$. Let $k$ be the smallest index such that $V_1$ is a prefix of $W_1 W_2 \cdots W_{k}$ (which is equal to $\eta(A(k))$). We are done if $V_1 = W_1 W_2 \cdots W_{k}$ because then $W_{k+1}$ is a prefix of $L$ and thus $\cord{\kappa_A}{k+1} = 1$. Otherwise, let $W_{k} = W_{k}' W_{k}''$ where $V_1 = W_1 W_2 \cdots W_{k-1}W_{k}'$ and both $W_{k}'$ and $W_{k}''$ are not empty. Decompose $L$ as $L = W_{k}'' T$. If $W_{k+1}$ is a prefix of $T$ then as above we get that $\cord{\kappa_A}{k+1} = 1$. Thus, we can assume that $T$ is a prefix of $W_{k+1}$. In this case, $\lp{W_{k}''} \leq 1$ (follows from Observation \ref{obs:sharedPieceLP1} because $W_{k}''$ is a subword of $W_k$ and a prefix of $L$) and thus $\lp{T} \geq 3$. Now, $T$ is a suffix of $L$ so we must have that $T = W_{k+1}$ (follows from Observation \ref{obs:twoPiecUniq} since $L$ is the unique element in $\R$ that $T$ is its subword). This implies the lemma since we now have that $\cord{\kappa_A}{k+1} = 1$.
\end{proof}

The following corollaries are immediate from Lemma \ref{lem:whenKappaNotZero}.

\begin{corollary} \label{cor:NonZeroKappaPropOfEtaA}
Suppose $A,B \in \Phi^*$ with $\eta(A)=\eta(B)$ and both admissible. Then, $A$ is efficient if and only if $B$ is efficient.
\end{corollary}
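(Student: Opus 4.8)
The plan is to reduce the statement entirely to Lemma~\ref{lem:whenKappaNotZero}, after observing that the criterion appearing there is a property of the word $\eta(A)$ in $X^*$ and of nothing else. Concretely, introduce the predicate $P$ on words over $X$ defined by: $P(V)$ holds if and only if $V$ contains a subword $R\in\R$ with $\lp{R}>\lp{\Comp{R}}$. Since by hypothesis $\eta(A)=\eta(B)$, we have $P(\eta(A))$ if and only if $P(\eta(B))$; this is the only ``computation'' involved and it is trivial. The key point that makes the corollary work at all is that \emph{both} $A$ and $B$ are assumed admissible, so that each of them falls under the scope of Lemma~\ref{lem:whenKappaNotZero}.

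The steps, in order, are then: (1) apply Lemma~\ref{lem:whenKappaNotZero} to $A$ (legitimate since $A$ is admissible) to get that $A$ is inefficient if and only if $P(\eta(A))$ holds; (2) apply Lemma~\ref{lem:whenKappaNotZero} to $B$ similarly to get that $B$ is inefficient if and only if $P(\eta(B))$ holds; (3) combine with $P(\eta(A))\Leftrightarrow P(\eta(B))$ to conclude that $A$ is inefficient exactly when $B$ is inefficient; (4) negate, recalling that ``inefficient'' is by definition the negation of ``efficient'', to obtain that $A$ is efficient if and only if $B$ is efficient. There is no genuine obstacle here --- the corollary is immediate from the lemma --- and the only thing worth a word of care is that admissibility of the two words is precisely what licenses the use of the lemma in steps (1) and (2); if one of them failed to be admissible it would automatically be inefficient regardless of $\eta$, and the equivalence would break.
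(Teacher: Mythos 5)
Your proof is correct and follows the same route as the paper, which simply notes that by Lemma~\ref{lem:whenKappaNotZero} efficiency of an admissible word depends only on $\eta(\cdot)$; you have merely spelled out the two applications of the lemma and the role of the admissibility hypothesis, which the paper leaves implicit.
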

\begin{proof}
By Lemma \ref{lem:whenKappaNotZero} is enough to know $\eta(A)$ to know if $A$ is efficient.
\end{proof}

\begin{corollary} \label{cor:condDdagHolds}
Suppose $A,B \in \Phi^*$ are efficient and $M$ is a $(\mu\xi,\sigma)$-thin diagram with $\eta(A)$ the label of $\mu$ and $\eta(B)$ the label of $\sigma$. Suppose further that $\xi$ is empty or is labelled with an element of $X$. Then, condition \emph{(\ddag)} of Lemma \ref{lem:essencialVert} holds for $M$.
\end{corollary}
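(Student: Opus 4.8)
The plan is to deduce condition (\ddag) almost immediately from Lemma \ref{lem:whenKappaNotZero}, once the boundary label of the region in question is pinned down. So I would let $D$ be a region of $M$ with boundary path $\delta\rho^{-1}$, both $\delta$ and $\rho$ positively labelled by $V_\delta$ and $V_\rho$, and assume $\lp{V_\delta} > \lp{V_\rho}$; the goal is to show that $\delta$ is not a subpath of $\mu$ and not a subpath of $\sigma$. First I would observe that the boundary cycle of $D$ reads as an element of the symmetric closure of the defining relations, and that a cyclic rotation of a relator $L_i R_i^{-1}$ has all its positive letters grouped before its negative letters only when the rotation is $L_i R_i^{-1}$ itself or $R_i L_i^{-1}$. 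Hence $V_\delta \in \R$ and $V_\rho = \Comp{V_\delta}$, so the hypothesis becomes $\lp{V_\delta} > \lp{\Comp{V_\delta}}$.

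Next I would argue by contradiction. If $\delta$ were a subpath of $\mu$, then, since $\mu$ is labelled by $\eta(A)$, the word $V_\delta$ would be a subword of $\eta(A)$; it lies in $\R$ and satisfies $\lp{V_\delta} > \lp{\Comp{V_\delta}}$. Because $A$ is efficient it is in particular admissible, so Lemma \ref{lem:whenKappaNotZero} applies and forces $A$ to be inefficient, contradicting the hypothesis that $A$ is efficient. Replacing $\mu$ and $A$ by $\sigma$ and $B$ (and using that $\sigma$ is labelled by $\eta(B)$ and that $B$ is efficient) rules out $\delta$ being a subpath of $\sigma$ in exactly the same way. The auxiliary arc $\xi$ never intervenes, since (\ddag) only concerns subpaths of $\mu$ and of $\sigma$, and the extra hypothesis on $\xi$ is needed only to invoke the other parts of Lemma \ref{lem:essencialVert}, not part (\ddag) itself.

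I do not anticipate a genuine obstacle; the content is just Lemma \ref{lem:whenKappaNotZero} repackaged together with the elementary remark about region labels. The one point that merits a line of care is the identification $V_\rho = \Comp{V_\delta}$: I must rule out that the boundary of $D$ is a cyclic rotation that splits $L_i$ or $R_i$, and this is precluded precisely because such a rotation would interleave positive and negative letters, whereas $\delta$ and $\rho$ are each purely positive.
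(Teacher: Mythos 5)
Your proof is correct and follows essentially the same route as the paper's: assume some region $D$ has $\delta$ a subpath of $\mu$ (or $\sigma$), note that $V_\delta \in \R$ with $\lp{V_\delta}>\lp{\Comp{V_\delta}}$, and then invoke Lemma \ref{lem:whenKappaNotZero} to contradict the efficiency of $A$ (or $B$). The paper simply asserts parenthetically that $V_\delta$ and $V_\rho$ are defining words and complements of each other, whereas you justify this with the cyclic-rotation observation; this is a harmless elaboration of the same argument rather than a different approach.
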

\begin{proof}
Assume there is a boundary region $D$ in $M$ with boundary $\delta\rho^{-1}$ such that $\delta$ and $\rho$ have positive labels $V_\delta$ and $V_\rho$ and $\lp{V_\delta}>\lp{V_\rho}$ (recall that $V_\delta$ and $V_\rho$ are defining words and are complements of each other). If $\delta$ is a subpath of $\mu$ then it follows from Lemma \ref{lem:whenKappaNotZero} that $A$ is inefficient but that would contradict the assumption. Hence, $\delta$ is not a subpath of $\mu$. Similarly, $\delta$ is not a subpath of $\sigma$. This shows that condition (\ddag) holds.
\end{proof}

\begin{corollary} \label{cor:bigPartToNonZeroKappa}
Let $A \in \Phi^*$ and let $M$ be a semigroup diagram with a boundary path $\mu$ labelled by $\eta(A)$. Assume there is a boundary region $D$ in $M$ with boundary $\rho\delta^{-1}$ such that: (1) $\rho$ and $\delta$ have positive labels $V_\rho$ and $V_\delta$; (2) $\rho$ is the outer boundary of $D$ and is a subpath of $\mu$; (3) $\lp{V_\delta} \leq 3$. Then, $A$ is inefficient.
\end{corollary}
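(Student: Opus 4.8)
The plan is to deduce the statement quickly from Lemma \ref{lem:whenKappaNotZero}, after separating off the case where $A$ fails to be admissible. I would split the argument on the admissibility of $A$.

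If $A$ is not admissible then, by Definition \ref{def:efficient}, $A$ is inefficient by fiat (efficiency requires admissibility), so there is nothing to prove in that case.

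Assume then that $A$ is admissible. The first substantive point is to identify $V_\rho$ and $V_\delta$ as the two sides of a single defining relation. Since $\rho\delta^{-1}$ is the boundary cycle of a region of a diagram over the co-presented group, its label is a cyclic word from the symmetric closure of $\R$; because $\rho$ is labelled by the positive word $V_\rho$ and $\delta$ by the positive word $V_\delta$, the only possibility is that the relation read off $D$ has sides $V_\rho$ and $V_\delta$, so $V_\rho, V_\delta \in \R$ and $V_\delta = \Comp{V_\rho}$. Now I invoke hypothesis (\dag): $\lp{V_\rho} + \lp{V_\delta} \ge 7$, which together with hypothesis (3), namely $\lp{V_\delta} \le 3$, yields $\lp{V_\rho} \ge 4 > \lp{V_\delta} = \lp{\Comp{V_\rho}}$. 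Finally, since $\rho$ is a subpath of $\mu$ and $\mu$ is labelled by $\eta(A)$, the word $V_\rho$ is a subword of $\eta(A)$. Hence $\eta(A)$ contains a subword $W=V_\rho \in \R$ with $\lp{W} > \lp{\Comp{W}}$, and Lemma \ref{lem:whenKappaNotZero} immediately gives that $A$ is inefficient.

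There is no real obstacle here. The only step deserving a moment's thought is the identification of $V_\rho$ and $V_\delta$ with the two sides of a defining relation --- this is what makes (\dag) applicable and what justifies $\Comp{V_\rho} = V_\delta$ --- and it follows at once from the definition of a (semigroup/group) diagram, given that $\rho$ and $\delta$ are positively labelled.
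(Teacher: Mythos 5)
Your proof is correct and follows essentially the same route as the paper: dispose of the non-admissible case via Definition \ref{def:efficient}, identify $V_\rho$ and $V_\delta$ as the two sides of a defining relation so that $V_\delta = \Comp{V_\rho}$, use condition (\dag) together with $\lp{V_\delta}\le 3$ to get $\lp{V_\rho}\ge 4 > \lp{\Comp{V_\rho}}$, and conclude by Lemma \ref{lem:whenKappaNotZero}. You spell out the identification of $V_\rho, V_\delta$ with the two sides of a relation a bit more carefully than the paper does, but the argument is the same.
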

\begin{proof}
It follows from Definition \ref{def:efficient} that we can assume that $A$ is admissible. The rest follows from Lemma \ref{lem:whenKappaNotZero} since $V_\rho = V_\delta$ is a defining relation and by condition (\dag) of the main theorem we have that $\lp{V_\rho} \geq 4$ so $V_\rho$ is a subword of $\eta(A)$ with  $\lp{V_\rho} > \lp{V_\delta} = \lp{\Comp{V_\rho}}$.
\end{proof}

We continue with a lemma which makes the connection to the diagrams of $S$.

\begin{lemma} \label{lem:diagThinWhenKapNull}
Let $A,B \in \Phi^*$ be efficient. Suppose there is an element $x \in X\cup\Set{\eps}$ such that $\eta(A) x = \eta(B)$ in $S$. Suppose further that $M$ is a minimal diagram with boundary $\mu\xi\sigma^{-1}$ such that $\mu$ is labelled by $\eta(A)$, $\xi$ is labelled by $x$, and $\sigma$ is labelled by $\eta(B)$. Then, $M$ is a $(\mu\xi,\sigma)$-thin diagram.
\end{lemma}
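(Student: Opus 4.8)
The plan is to derive the statement from Proposition \ref{prop:condForThin} by contradiction. Since $\eta(A)x =_S \eta(B)$ with $\eta(A),\eta(B)$ positive words and $x\in X\cup\Set{\eps}$, the minimal semigroup diagram $M$ has boundary cycle $\eta(A)\,x\,\eta(B)^{-1}$, so Proposition \ref{prop:condForThin} applies. Suppose, for contradiction, that $M$ is \emph{not} a $(\mu\xi,\sigma)$-thin diagram. Then option (2) of that proposition must hold: there is a boundary region $D$ with $\partial D = \rho\delta^{-1}$ such that $\rho$ is the outer boundary of $D$ and a subpath of $\mu$ (which is labelled by $\eta(A)$) or of $\sigma$ (which is labelled by $\eta(B)$), $\delta$ is the full inner boundary with $N_\delta^D = 3$, and both $\rho$ and $\delta$ carry positive labels. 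In particular $\rho$ does not meet $\xi$.

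Next I would bound the piece-length of the label of $\delta$. Write $V_\delta$ for that label. By Lemma \ref{lem:goodVanKampenDiagK23}, in the minimal semigroup diagram $M$ every connected component of the intersection of two neighbouring regions' boundaries is labelled by a piece; hence the neighbours of $D$ along $\delta$ induce a decomposition of $V_\delta$ into $N_\delta^D = 3$ pieces, so $\lp{V_\delta}\le 3$.

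Finally I would invoke Corollary \ref{cor:bigPartToNonZeroKappa}. If $\rho$ is a subpath of $\mu$, then, viewing $\mu$ as the boundary path of $M$ labelled by $\eta(A)$, the region $D$ has boundary $\rho\delta^{-1}$ with $\rho,\delta$ positively labelled, $\rho$ the outer boundary and a subpath of $\mu$, and $\lp{V_\delta}\le 3$; the corollary then forces $A$ to be inefficient, contradicting the hypothesis that $A$ is efficient. If instead $\rho$ is a subpath of $\sigma$, the same reasoning with $\sigma$ playing the role of the boundary path labelled by $\eta(B)$ forces $B$ to be inefficient, again a contradiction. Either way we reach a contradiction, so $M$ must be $(\mu\xi,\sigma)$-thin.

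The argument is essentially an assembly of previously established facts, so I do not expect a substantive obstacle. The one point requiring care is matching the region $D$ produced by Proposition \ref{prop:condForThin} precisely to the hypotheses of Corollary \ref{cor:bigPartToNonZeroKappa}: one must verify that it is $\rho$ (not $\delta$) that is the outer boundary lying in $\mu$ or $\sigma$, that both labels are positive, and — crucially — that the piece-length bound $\lp{V_\delta}\le 3$ lands on the \emph{complement} side $\delta$, which is exactly what condition (3) of that corollary requires.
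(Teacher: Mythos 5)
Your proof is correct and takes essentially the same route as the paper: contradiction via Proposition \ref{prop:condForThin} to extract the region $D$, then Corollary \ref{cor:bigPartToNonZeroKappa} to conclude that $A$ (or $B$) is inefficient. You spell out the $\lp{V_\delta}\le 3$ bound and the two cases $\rho\subset\mu$ and $\rho\subset\sigma$ explicitly, where the paper leaves them implicit under a ``without loss of generality.''
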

\begin{proof}
Assume by contradiction that $M$ is not $(\mu\xi,\sigma)$-thin. By Proposition \ref{prop:condForThin} we have,
without loss of generality, a region $D$ with the following properties:
\begin{enumerate}
 \item $\partial D = \rho \delta^{-1}$ where $\rho$ and $\delta$ have positive labels, $\rho$ is the outter boundary of $D$, and $\delta$ is the inner boundary of $D$.
 \item if $x \neq \eps$ then $\rho$ does not contain $\xi$.
 \item $N_\delta^D = 3$ (the number of neighbors of $D$ along $\delta$ is $3$).
\end{enumerate}
By the second part we have that $\rho$, the outer boundary of $D$, is a subpath of $\mu$ or $\sigma$. Thus, by Corollary \ref{cor:bigPartToNonZeroKappa} we get that $A$ is inefficient which is a contradiction to the assumption on $A$.
\end{proof}

Using the above lemma we can prove the following technical proposition. A similar result in the context of groups is straightforward. It turns out that for semigroups one must work a little bit harder.

\begin{proposition} \label{prop:zeroKappaToKFT}
Let $A,B \in \Phi^*$ be efficient. Suppose that $\pi(A)=\pi(B)$ or there is an element $x\in X$ such that $\pi(A \varphi_x) = \pi(B)$ in $S$. Suppose further that $B$ is a geodesic. Then, one of the following options hold:
\begin{enumerate}
 \item $A$ and $B$ are $3$-fellow-travellers.
 \item There is an element $C \in \Phi^*$ such that $\pi(A) = \pi(C)$, $|C|<|A|$, and $C$ and $A$ are $2$-fellow-travellers (namely, $C$ $2$-refute $A$).
\end{enumerate}
\end{proposition}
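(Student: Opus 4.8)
The plan is to translate the equality $\eta(A)x=_S\eta(B)$ — where $x\in X\cup\Set{\eps}$, with $x=\eps$ covering the case $\pi(A)=\pi(B)$ — into a van Kampen diagram and then read off the fellow-traveller bound region by region. By van Kampen's theorem I would fix a minimal semigroup diagram $M$ over $\P$ with boundary cycle $\mu\xi\sigma^{-1}$, where $\mu$ is labelled by $\eta(A)$, $\xi$ by $x$ (empty if $x=\eps$), and $\sigma$ by $\eta(B)$. Since $A$ and $B$ are efficient, Lemma~\ref{lem:diagThinWhenKapNull} makes $M$ a $(\mu\xi,\sigma)$-thin diagram, and Corollary~\ref{cor:condDdagHolds} makes condition~(\ddag) of Lemma~\ref{lem:essencialVert} hold, so all parts of that lemma are available. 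If $M$ has no regions then $\eta(A)x=\eta(B)$ already as words of $X^*$, and an elementary argument comparing the two admissible $\Phi$-decompositions (using $\kappa_A=\kappa_B=\overline0$ and the fact that admissibility keeps letter-boundaries sparse relative to occurrences of defining words) shows $A$ and $B$ are $3$-fellow-travellers. So I may assume $M$ has regions; by Lemma~\ref{lem:essencialVert}(\ref{lem:essencialVert:vt}) they form a chain $D_1,\dots,D_m$ with $\mu_i:=\partial D_i\cap\mu$ and $\sigma_i:=\partial D_i\cap\sigma$ nonempty edge-paths, $\mu=\mu_1\cdots\mu_m$, $\sigma=\sigma_1\cdots\sigma_m$ (with $\xi$ at one end), and with consecutive regions meeting along an inner path $\tau_i$ labelled by a single piece; from $\partial D_i$ one reads the defining relation $\mu_i\tau_i=\tau_{i-1}\sigma_i$.

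Next I would set up distance bookkeeping in the metric $d_\Phi$. Every subword of a defining word lies in $\Sb$ and so is the value of a single letter of $\Phi$; hence moving between two vertices of a single $\mu_i$ costs at most one $d_\Phi$-step, as does crossing any $\tau_i$, so every vertex of $\mu$ lying in $\mu_i$ is $d_\Phi$-distance at most $2$ from the endpoints of $\tau_{i-1}$ and $\tau_i$ on $\sigma$, and symmetrically. I would then line up the $\Phi$-decomposition of $\eta(A)$ against the chain: since $A$ is admissible and two consecutive letters of $A$ with subpaths inside one $\mu_i$ would merge into a subword of a defining word, at most one letter of $A$ sits strictly inside each $\mu_i$, so — counting also the at most one letter straddling each region boundary — the number $a_i$ of letters of $A$ terminating inside $\mu_i$ satisfies $a_i\le 2$, and likewise $b_i\le 2$ for $B$. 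Thus the ``$A$-clock'' $f(i)=a_1+\dots+a_i$ and the ``$B$-clock'' $g(i)=b_1+\dots+b_i$ each advance by at most $2$ per region, with $f(m)=|A|$, $g(m)=|B|$, and $f(i)$ (resp.\ $g(i)$) recording which prefixes of $A$ (resp.\ $B$) terminate over $\mu_1\cdots\mu_i$ (resp.\ $\sigma_1\cdots\sigma_i$).

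The dichotomy is whether these two clocks stay synchronized. If $|f(i)-g(i)|$ stays below a constant depending only on the presentation — essentially the maximal piece-length, which I would calibrate so the resulting fellow-traveller constant is exactly $3$ — for all $i$, then for every $n$ the prefixes $A(n)$ and $B(n)$ terminate over regions of the chain that are a bounded number apart, and the estimates of the previous paragraph give $d_\Phi(A(n),B(n))\le 3$; this is option~(1). Otherwise the clocks drift apart. Here I would use that $B$ is a geodesic — so $g(m)=|B|$ cannot overshoot $f(m)=|A|$ by more than the length of $\xi$, which prevents the drift from accumulating and forces it to be ``created and undone'' inside the chain — together with the rigidity of efficient decompositions (the delicate point, explained below) to locate a \emph{short} block of consecutive regions $D_i,\dots,D_j$ over which $A$ spends strictly more $\Phi$-letters than $B$ spends over $\sigma_i\cdots\sigma_j$. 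A local surgery across this block, driven by the relations $\mu_t\tau_t=\tau_{t-1}\sigma_t$ (and using that the $\tau_t$ are single $\Phi$-letters), then produces $C\in\Phi^*$ with $\pi(C)=\pi(A)$ and $|C|<|A|$; since the modification is confined to an $O(1)$-block, $C$ and $A$ are $2$-fellow-travellers, which is option~(2).

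The main obstacle is exactly the localization in the second alternative: I must show that a large clock discrepancy is always traceable to a short, identifiable block of regions, since rerouting a long block would shift every later prefix by an unbounded amount and so would not $2$-fellow-travel $A$. Pinning this down means using efficiency of $A$ — via Corollary~\ref{cor:bigPartToNonZeroKappa}, Lemma~\ref{lem:essencialVert}(\ref{lem:essencialVert:lt}), and the left-greedy property Lemma~\ref{lem:essencialVert}(\ref{lem:essencialVert:gr}) — to tie the $\Phi$-decomposition of $\eta(A)$, up to bounded local error, to the decomposition the diagram prescribes, so that no long stretch of $M$ can silently carry a wasteful decomposition of $A$ without $\kappa_A$ detecting it. Making the surgered word stay positive (handling the terminal inner piece, and the letter $x$ when $x\neq\eps$, typically by taking the block to abut a free end of the diagram or by a more careful local replacement), checking that it strictly shortens $A$, and tuning the several constants so the two fellow-traveller bounds come out as exactly $3$ and $2$, is the remaining work.
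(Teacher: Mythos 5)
You correctly set up the diagrammatic framework --- the minimal diagram $M$ with boundary $\mu\xi\sigma^{-1}$, thinness via Lemma~\ref{lem:diagThinWhenKapNull}, and condition~(\ddag) via Corollary~\ref{cor:condDdagHolds} --- and this agrees with the paper's opening. From there, however, you diverge and the gap appears exactly where you flag it yourself. The paper does not attempt a direct comparison between $A$ and $B$ via a region-by-region clock; instead it introduces two intermediate words $C_\mu$ and $C_\sigma$ read off from the \emph{fundamental decomposition} of $M$ (Definition~\ref{def:CMuSigma}). These are, by Corollary~\ref{cor:indElemIsSemiGeo}, semi-geodesic, by Corollary~\ref{cor:NonZeroKappaPropOfEtaA} efficient, and by Lemma~\ref{lem:WmuAndWsigAre1FT} they $1$-fellow-travel each other \emph{for free}, because $C_\mu(n)$ and $C_\sigma(n)$ terminate at the same region of the chain by construction. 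The entire ``dichotomy with surgery'' that you leave as remaining work is then captured in one shot by Lemma~\ref{lem:semiGeoToFT}: applied with $\eta(A)=\eta(C_\mu)$, $C_\mu$ semi-geodesic, it says either $A$ and $C_\mu$ are $1$-fellow-travellers, or there is $C$ with $\eta(C)=\eta(A)$, $|C|<|A|$, $2$-fellow-travelling $A$ --- which is precisely option~(2). And since $B$ is geodesic and $C_\sigma$ is semi-geodesic with $\eta(B)=\eta(C_\sigma)$, the same lemma forces $B$ and $C_\sigma$ to be $1$-fellow-travellers. Chaining $A\to C_\mu\to C_\sigma\to B$ via Lemma~\ref{lem:twoKFTs} yields the constant $3$ of option~(1).

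Concretely, the missing idea is the factorization through the canonical ``diagram words'' $C_\mu, C_\sigma$. Your clock bookkeeping tries to compare the $\Phi$-decomposition of $A$ with that of $B$ across the chain directly, but these two decompositions live on opposite sides of the diagram and are only loosely coupled; the localization problem you identify (tracing a large drift to a short block) is real and is exactly what would make the direct approach laborious. The paper sidesteps it by anchoring both $A$ and $B$ to canonical semi-geodesic words \emph{on their own side} of the diagram, where the mismatch with the diagram's decomposition is controlled by Lemma~\ref{lem:semiGeoToFT} alone; the only place the two sides are compared is between $C_\mu$ and $C_\sigma$, which by Lemma~\ref{lem:propOfCMuSigma} are synchronized exactly. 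Without spotting this intermediary, the surgery branch of your argument would need to reproduce, essentially from scratch, the content of Lemma~\ref{lem:semiGeoToFT} plus the structural facts of Lemma~\ref{lem:propOfCMuSigma} and Lemma~\ref{lem:multLeftGredElmts}; as written it is a plan, not a proof.
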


Before we give the (rather long) proof of the proposition we show that it implies that the conditions of Theorem \ref{thm:falseification}.

\begin{corollary}
Conditions (R) and (FT) of Theorem \ref{thm:falseification} hold for the order ``$\prec$''. In particular, $S$ is an automatic semigroup.
\end{corollary}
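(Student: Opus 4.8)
The plan is to derive conditions (R) and (FT) of Theorem~\ref{thm:falseification} from Propositions~\ref{prop:inefficientCanRefute} and~\ref{prop:zeroKappaToKFT}, after recording one structural fact about $M_\prec$. The remaining hypotheses of Theorem~\ref{thm:falseification} are already in hand: ``$\prec$'' is regular, and since ``$\prec$'' depends only on the vectors $\kappa_A$ and shortlex is a well order, every $s\in S$ has a $\prec$-minimal preimage in $\Phi^*$, so $\pi(M_\prec)=S$. The structural fact is: \emph{every element of $M_\prec$ is efficient and is a geodesic over $\Phi$}. Indeed, a strictly shorter preimage of $\pi(B)$ would have a shorter, hence shortlex-earlier, $\kappa$-vector than $\kappa_B$, contradicting $\prec$-minimality; and an inefficient $B\in M_\prec$ would, by Proposition~\ref{prop:inefficientCanRefute}, admit a preimage whose $\kappa$-vector strictly precedes $\kappa_B$ in shortlex, again a contradiction. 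In particular, any $\prec$-minimal preimage of an element of $S$ is an efficient geodesic.

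For (R): let $A\notin M_\prec$. If $A$ is inefficient, Proposition~\ref{prop:inefficientCanRefute} produces a $3$-refutation of $A$ and we are done. If $A$ is efficient then $\kappa_A=\overline{0}$; since $A\notin M_\prec$ there is a preimage $U$ of $\pi(A)$ with $\kappa_U$ strictly preceding the all-zero vector of length $|A|$, and as that vector is shortlex-least among vectors of its length this forces $|U|<|A|$. Let $B$ be a $\prec$-minimal preimage of $\pi(A)$; then $B$ is an efficient geodesic with $|B|<|A|$. Apply Proposition~\ref{prop:zeroKappaToKFT} to $(A,B)$ via the hypothesis $\pi(A)=\pi(B)$: either $A$ and $B$ are $3$-fellow-travellers, in which case $B$ $3$-refutes $A$ (its $\kappa$-vector, a shorter zero vector, shortlex-precedes $\kappa_A$); or there is $C$ with $\pi(C)=\pi(A)$, $|C|<|A|$, and $C$, $A$ $2$-fellow-travellers, in which case $C$ $2$-refutes $A$ (a shorter word has a shortlex-earlier $\kappa$-vector). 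Either way (R) holds with constant~$3$.

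For (FT): let $A,B\in M_\prec$ with $Aa=_S B$ for $a\in\Phi\cup\{\eps\}$. I will treat the cases $a=\varphi_x$ with $x\in X$ and $a=\eps$ directly, and set aside the remaining symbols $\varphi_V$ of $\Phi$, which are handled by the routine reduction of the fellow-traveller property to a generating subset (each $\varphi_V$ represents a word of bounded length over $X$). By the structural fact, $A$ and $B$ are efficient geodesics, so Proposition~\ref{prop:zeroKappaToKFT} applies to $(A,B)$. Its ``shortening'' alternative would yield a preimage of $\pi(A)$ strictly shorter than $A$, which is impossible since $A$ is a geodesic; hence its first alternative holds, and $A$, $B$ are $3$-fellow-travellers. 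Thus (FT) holds with constant~$3$, and Theorem~\ref{thm:falseification} yields that $S$ is automatic.

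The hinge of the whole argument is the structural fact that $M_\prec$ consists of efficient geodesics: in (R) it supplies an efficient geodesic competitor $B$ for a non-minimal $A$, while in (FT) it is exactly what rules out the unwanted ``shortening'' outcome of Proposition~\ref{prop:zeroKappaToKFT}. The only other point needing attention is the bookkeeping that passes from multiplication by the original generators $x\in X$ to multiplication by an arbitrary symbol of $\Phi$; this is standard and I have not dwelt on it here.
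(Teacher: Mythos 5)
Your proposal is correct and follows essentially the same route as the paper: establish that every $\prec$-minimal element is an efficient geodesic (via Propositions~\ref{prop:inefficientCanRefute} and the shortlex structure of $\kappa$-vectors), then feed Proposition~\ref{prop:zeroKappaToKFT} into conditions (R) and (FT), with the geodesic property ruling out the unwanted shortening alternative. Two small points of bookkeeping: the ``routine reduction'' you defer for a general $\varphi_V\in\Phi$ is exactly what the paper does spell out---it writes $V=x_1\cdots x_n$, chooses $\prec$-minimal intermediates $C_0=A, C_1,\ldots,C_n=B$ with $\pi(C_j)=\pi(C_{j-1}\varphi_{x_j})$, applies Proposition~\ref{prop:zeroKappaToKFT} to each consecutive pair, and chains via Lemma~\ref{lem:twoKFTs}; as a consequence, the constant for (FT) is $3k$ where $k$ bounds the lengths of the words in $\Sb$, not $3$ as your last sentence states. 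Neither affects the validity of the conclusion.
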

\begin{proof}
First we prove that condition (R) holds. Let $A\in\Phi^*$ be an element that is \emph{not} ``$\prec$''-minimal element. By Proposition \ref{prop:inefficientCanRefute} we can assume that $A$ is efficient (or otherwise it can be $3$-refuted by the proposition). Take $B\in\Phi^*$ such that $\pi(A)=\pi(B)$ and $B$ is ``$\prec$''-minimal. By the same proposition we get that $B$ is efficient. By minimality, $B$ is a geodesic. By Proposition \ref{prop:zeroKappaToKFT} either $A$ and $B$ are $3$-fellow-travellers and thus $A$ is $3$-refuted by $B$ or there is an element $C$ that $2$-refutes $A$. This shows that condition (R) holds. Next we prove condition (FT). Let $k$ be a bound on the lengths of the elements in $\Sb$ (recall that $\Sb$ is a finite set). Take two ``$\prec$''-minimal elements $A$ and $B$ such that $d_\Phi(A,B)\leq 1$. As above, $A$ and $B$ are efficient and geodesics. If we have that $d_\Phi(A,B) = 0$ then $\pi(A) = \pi(B)$ so by Proposition \ref{prop:zeroKappaToKFT} we have that $A$ and $B$ are $3$-fellow-travellers. Assume that $d_\Phi(A,B) = 1$. Then, there is $\varphi_V \in \Phi$ such that, switching $A$ and $B$ if necessary, $\pi(A \varphi_V) = \pi(B)$. We claim that $A$ and $B$ are $3k$-fellow-travellers. Let $V=x_1 x_2 \cdots x_n$. Take elements $C_0,C_1,C_2,\ldots,C_{n-1},C_n$ in $\Phi^*$ such that each $C_j$ is ``$\prec$''-minimal, $C_0=A$ and $\pi(C_j) = \pi(C_{j-1} \varphi_{x_j})$. We have that $\pi(C_{n}) = \pi(A \varphi_{x_1}\cdots \varphi_{x_n}) = \pi(A \varphi_{x_1 \cdots x_n}) = \pi(A \varphi_V) = \pi(B)$ and thus we take $C_n$ to be $B$. By Proposition \ref{prop:zeroKappaToKFT} we get that $C_{j-1}$ and $C_j$ are $3$-fellow-travellers (by ``$\prec$''-minimality and Proposition \ref{prop:inefficientCanRefute} both are efficient). Consequently, $C_0$ and $C_n$ are $3n$-fellow-travellers (follows from Lemma \ref{lem:twoKFTs}). Finally, because $n\leq k$ we get that $A$ and $B$ are $3k$-fellow-travellers.
\end{proof}

The rest of the section is devoted to the proof of Proposition \ref{prop:zeroKappaToKFT}. An element $A \in \Phi^*$ is called \emph{semi-geodesic} if for any other $B \in \Phi^*$ such that $\eta(A)=\eta(B)$ (equality as elements of $X^*$) we have $|A|\leq|B|$. Obviously, a geodesic is also a semi-geodesic but the converse may not be true. 

\begin{lemma}\label{lem:semiGeoToFT}
Let $A,B\in\Phi^*$ and suppose $\eta(A)x = \eta(B)$ for some $x \in X \cup \Set{\eps}$. If $B$ is semi-geodesic then either:
\begin{enumerate}
 \item $A$ and $B$ are $1$-fellow-travellers; or,
 \item there is $C \in \Phi^*$ such that $|C|<|A|$, $\eta(A)=\eta(C)$, and $A$ and $C$ are $2$-fellow-travellers.
\end{enumerate}
\end{lemma}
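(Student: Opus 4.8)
The plan is to compare the prefixes $A(j)$ and $B(j)$ directly, using the word $\eta$ and the induced metric $d_\Phi$. Since $\eta(A)x=\eta(B)$ as words in $X^*$, the words $\eta(A)$ and $\eta(B)$ differ by at most one letter, so $\eta(A(j))$ and $\eta(B(j))$ are ``almost prefixes of the same $X^*$-word''. The subtlety, and the reason semigroups are harder than groups here, is that a single $\Phi$-symbol $\varphi_W$ can correspond to a long $X^*$-word $W$, so a prefix $A(j)$ of length $j$ in $\Phi^*$ may correspond to an $\eta$-value that is much shorter or much longer than $\eta(B(j))$ even though $A$ and $B$ have comparable lengths. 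First I would set up the correspondence: for each index $j$ let $W_1\cdots W_j = \eta(A(j))$ and $V_1\cdots V_j = \eta(B(j))$ where $A=\varphi_{W_1}\cdots\varphi_{W_n}$ and $B=\varphi_{V_1}\cdots\varphi_{V_m}$. We want to show $d_\Phi(A(j),B(j))\le 1$ for all $j$, unless we can find the shorter refuting element $C$.

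The key dichotomy is whether the ``$\eta$-lengths'' of corresponding prefixes stay close. Concretely, I would argue: if for every $j$ the $X^*$-prefixes $\eta(A(j))$ and $\eta(B(j))$ agree up to bounded length difference (one letter, coming from the trailing $x$), then $\overline{\eta(A(j))}$ and $\overline{\eta(B(j))}$ differ by at most one generator in $S$, hence $d_\Phi(A(j),B(j))\le 1$ and we are in case 1 (the $1$-fellow-traveller case). The only way this can fail is if at some index $j$ the word $\eta(A(j))$ is a \emph{strictly longer} prefix of the common $X^*$-word than $\eta(B(j))$ by more than the one letter $x$ allows — equivalently, some block $V_i$ in $B$ ``straddles'' the point where block $W_j$ ends, so that $\eta(B(j))$ lags behind $\eta(A(j))$. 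In that situation I would use the semi-geodesic hypothesis on $B$: because $B$ is semi-geodesic, its decomposition into $\Phi$-symbols is as short as possible for the $X^*$-word $\eta(B)$, which constrains how the $V_i$'s can be positioned; a straddling block would let us re-decompose $\eta(A)=\eta(B)x^{-1}$-prefix more economically and produce $C\in\Phi^*$ with $\eta(C)=\eta(A)$ and $|C|<|A|$, namely by re-cutting $\eta(A)$ along the (coarser) block boundaries inherited from $B$. One then checks $C$ and $A$ are $2$-fellow-travellers: at each prefix length they differ by at most the straddling block contributes, which is absorbed into distance $\le 2$ in the induced metric.

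The main obstacle I anticipate is making the re-decomposition argument precise: showing that when $B$'s blocks fail to track $A$'s blocks, the semi-geodesic minimality of $B$ forces a \emph{global} improvement on $A$ (a strictly shorter $C$ with the same $\eta$-value), not merely a local one, and simultaneously that this $C$ fellow-travels $A$ with the uniform constant $2$. The bookkeeping is in controlling, uniformly in $j$, how far $\overline{\eta(C(j))}$ can drift from $\overline{\eta(A(j))}$; I expect the bound $2$ rather than $1$ precisely because re-cutting can shift a prefix boundary past one full block of $\eta(A)$, and such a block, being a subword of a defining word (hence an element of $\Sb$), moves the Cayley-graph position by a bounded-but-not-unit amount — however since we measure in $d_\Phi$ over the \emph{new} generating set $\Phi$, a whole block counts as a single generator step, giving the clean constant $2$. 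The remaining verifications — that $\eta(C)=\eta(A)$ literally as words, that $\pi(C)=\pi(A)$, and that the fellow-traveller inequality holds for all prefix lengths including those beyond $|C|$ — are routine once the re-decomposition is in hand, using that $A(j)$ stabilizes to $A$ for $j\ge|A|$ and similarly for $C$.
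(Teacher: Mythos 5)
Your high-level plan matches the paper's: track how far the $X^*$-prefixes $\eta(A(j))$ and $\eta(B(j))$ drift apart, and, at the first index where they differ by more than one $\Sb$-block, use the semi-geodesicity of $B$ to splice a shorter $\Phi^*$-prefix onto the remaining suffix of $A$. However, there are two substantive gaps.

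First, you have the direction of the drift backwards. You assert that at the first failure index $\eta(A(j))$ is \emph{strictly longer} than $\eta(B(j))$ (``$\eta(B(j))$ lags behind''). The paper proves the opposite: if $r$ is the least index with $D(r,r)\geq2$, then $|\eta(A(r))| < |\eta(B(r))|$. This inequality is not incidental --- it is exactly where semi-geodesicity of $B$ enters. If instead $|\eta(A(r))|\geq|\eta(B(r))|$, one picks the least $s\geq r$ with $|\eta(B(s))|\geq|\eta(A(r))|$, notes $s-r\geq2$ and $D(r,s)\leq1$, and replaces the prefix $B(s)$ by $A(r)\varphi_U$, strictly shortening $B$ --- contradicting $B$'s semi-geodesicity. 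So the case you describe is the \emph{impossible} one, and the construction you gesture at (``re-cutting $\eta(A)$ along the coarser block boundaries of $B$'') would be applied in the wrong situation. In the correct case one takes a maximal $s$ with $|\eta(B(s))|\leq|\eta(A(r))|$, getting $r-s\geq2$ and $D(r,s)\leq1$, and sets $C = B(s)\varphi_U T$ where $A=A(r)T$ (or $C=B(s)T$ if $V_{r,s}=\eps$). Then $|C|\leq s+1+|T|<r+|T|=|A|$ and $\eta(C)=\eta(A)$.

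Second, and more importantly, your verification that $A$ and $C$ are $2$-fellow-travellers is a hand-wave, and it cannot be made rigorous without a step you omit entirely: the reduction to \emph{admissible} $A$. The paper first invokes Lemma~\ref{lem:adjoingWhenInS} to dispose of the non-admissible case (landing directly in alternative~(2) with an even better constant). Admissibility of $A$ is then used in a pigeonhole argument to prove that the index gap satisfies $r-s\leq3$: from $D(s,s)\leq1$ and $D(r,s)\leq1$ one decomposes $W_{s+1}\cdots W_r$ into at most two $\Sb$-blocks, and if $r-s>3$ some $W_iW_{i+1}$ would itself lie in $\Sb$, contradicting admissibility. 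The bound $r-s\leq3$ is precisely what caps $d_\Phi(A(i),C(i))\leq2$ in the transitional range; without it you get no uniform constant. Your sketch mentions neither the admissibility reduction nor any quantitative bound on how far the re-cutting can shift, so the claimed constant~$2$ is not justified.
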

\begin{proof}
By Lemma \ref{lem:adjoingWhenInS} we can assume that $A$ is admissible since otherwise the second case holds (i.e., there are no two consecutive letters $\varphi_{W_i} \varphi_{W_{i+1}}$ in $A$ such that $W_i W_{i+1} \in \Sb$). For every two indexes $r$ and $s$ there is a an element $V_{r,s}\in X^*$ such that either $\eta(A(r)) V_{r,s} =
\eta(B(s))$ or $\eta(A(r)) = \eta(B(s)) V_{r,s}$. Let $D(r,s)$ denote the minimal $k$ such that $V_{r,s}$ decomposes as $V_{r,s} = U_1 U_2 \cdots U_k$ and $U_i \in \Sb$ for $1 \leq i \leq k$ (it is zero if $V_{r,s} = \eps$). If $D(r,r)\leq 1$ for all $r$ then clearly $A$ and $B$ are $1$-fellow-travellers. Otherwise, let $r$ be the minimal index such $D(r,r)\geq2$. Notice that $|\eta(A(r))| < |\eta(B(r))|$ by the fact that $B$ is a semi-geodesic. (If not, take a minimal index $s\geq r$ such that $|\eta(B(s))| \geq |\eta(A(r))|$. Then, $s - r \geq 2$ and $D(r,s)\leq 1$ so we can replace the prefix $B(s)$ of $B$ with $A(r)\varphi_U$ for some $U\in\Sb$ and thus reduce the length of $B$ which is impossible if $B$ is a semi-geodesic.) Next, take a maximal index $s$ such that $|\eta(B(s))| \leq |\eta(A(r))|$ so $r-s \geq 2$ and $D(r,s)\leq1$. Let $U = V_{r,s}$. By the fact that $D(r,s)\leq1$ we have that $U=\eps$ or $U \in \Sb$. Suppose $A=A(r) T$ and let $C \in \Phi^*$ such that $C = B(s) T$ if $U=\eps$ and $C = B(s) \varphi_U T$ otherwise. Then, $\eta(C) = \eta(A)$. Also, $|C| < |A|$ by the following computation:
\[
\begin{array}{rl}
  |C| & \leq |B(s)| + 1 + |T| \\
  & = s + 1 + |T| \\
  & < r + |T| = |A(r)| + |T| = |A|
\end{array}
\]
We claim that $A$ and $C$ are $2$-fellow-travellers. For simplicity we will prove this under the assumption that $U\neq\eps$. By minimality of $r$ we have that $A(s)$ and $C(s)$ are $1$-fellow-travellers. By the construction of $C$ we have that, $\eta(C(s+1))=\eta(A(r))$. Hence, it is enough to show that $r-s \leq 3$. Let $A(r)=A(s) Q$ and suppose $A = \varphi_{W_1} \cdots \varphi_{W_n}$ then $\eta(Q)=W_{s+1} W_{s+2} \cdots W_r$. We have that $D(s,s)\leq 1$ and $D(r,s)\leq1$ so we have a decomposition of $\eta(Q)$ into $U_1 U_2 \cdots U_k$ where $U_i \in B$ for $1 \leq i \leq k$ and $k\leq2$. Consequently, by the pigeonhole principle if $r-s > 3$ there will be an index $s+1 \leq i < r$ such that $W_i W_{i+1}$ is an element of $\Sb$. This contradicts our assumption that $A$ is admissible.
\end{proof}

We say that $A = \varphi_{W_1} \cdots \varphi_{W_n}$ in $\Phi^*$ is \emph{left-greedy} if for every $1 \leq i < n$ we have that $W_i x \not\in \Sb$ where $x$ is the first letter of $W_{i+1}$. Clearly, for any $A \in \Phi^*$ there is only one left-greedy representative $A'$ such that $\eta(A)=\eta(A')$.

\begin{lemma} \label{lem:leftGreedySemiGeodesic}
For every $A \in \Phi^*$ there is a unique element $A'\in\Phi^*$ such that $A'$ is semi-geodesic, left-greedy, and $\eta(A) = \eta(A')$.
\end{lemma}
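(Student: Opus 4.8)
The plan is to show that the left‑greedy representative $A'$ of $A$ --- whose existence and uniqueness were noted just above the lemma --- is automatically semi‑geodesic; this suffices, since the conditions ``left‑greedy'' and ``$\eta(A)=\eta(A')$'' already pin down $A'$ uniquely. Write $V=\eta(A)\in X^*$. The first thing I would record is a reformulation of left‑greediness that uses the fact that $\Sb$ is closed under taking subwords: if $A'=\varphi_{U_1}\cdots\varphi_{U_m}$ is left‑greedy then $U_1$ is the \emph{longest} prefix of $V$ lying in $\Sb$, and, more generally, for each $j$ the block $U_{j+1}$ is the longest prefix of the tail $U_{j+1}U_{j+2}\cdots U_m$ lying in $\Sb$. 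Indeed, if a strictly longer prefix $P$ of that tail were in $\Sb$, then $P=U_{j+1}x\cdots$ with $x$ the first letter of $U_{j+2}$, and then $U_{j+1}x$, being a subword of $P\in\Sb$, would itself lie in $\Sb$, contradicting left‑greediness (the case $j+1=m$ is vacuous). I would also note that every block $U_i$ is non‑empty: every letter occurring in $V$ lies in $\Sb$ (it is a single‑letter subword of a defining word), so the longest $\Sb$‑prefix of any non‑empty suffix of $V$ is non‑empty.

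Next I would run the standard ``greedy stays ahead'' argument. Let $B=\varphi_{V_1}\cdots\varphi_{V_\ell}$ be an arbitrary element of $\Phi^*$ with $\eta(B)=V$; we may assume every $V_i$ is non‑empty, since deleting empty blocks only shortens $B$. Set $a_j=|U_1\cdots U_j|$ and $b_j=|V_1\cdots V_j|$, so that $a_0=b_0=0$, $a_m=b_\ell=|V|$, and $a_0<a_1<\cdots<a_m$ is strictly increasing. I claim $b_j\le a_j$ for all $0\le j\le\min(m,\ell)$, by induction on $j$; the base case is trivial. For the inductive step assume $b_j\le a_j$ with $j<\min(m,\ell)$. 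If $b_{j+1}\le a_j$ then $b_{j+1}\le a_j\le a_{j+1}$ and we are done. Otherwise $b_j\le a_j<b_{j+1}$, so the factor of $V$ occupying positions $a_j+1,\dots,b_{j+1}$ is a non‑empty suffix of the block $V_{j+1}$, hence a subword of a defining word, i.e.\ it lies in $\Sb$. But $U_{j+1}$, occupying positions $a_j+1,\dots,a_{j+1}$, is by the reformulation above the longest $\Sb$‑prefix of the suffix of $V$ starting at position $a_j+1$; therefore $b_{j+1}-a_j\le a_{j+1}-a_j$, i.e.\ $b_{j+1}\le a_{j+1}$, completing the induction.

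Finally I would conclude: if $\ell<m$, then taking $j=\ell$ gives $|V|=b_\ell\le a_\ell<a_m=|V|$, a contradiction; hence $\ell\ge m$, that is $|B|\ge|A'|$. So $A'$ is semi‑geodesic, and, combined with the uniqueness of the left‑greedy representative, this proves the lemma. I do not expect a genuine obstacle here; the only points needing care are the translation of the pointwise condition ``$W_i$ cannot be extended by the next single letter and stay in $\Sb$'' into ``$W_i$ is a maximal admissible prefix'' (which is exactly where closure of $\Sb$ under subwords enters), and keeping the partial‑length bookkeeping honest, in particular using the strict monotonicity coming from non‑empty blocks to obtain the final contradiction.
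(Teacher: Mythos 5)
Your proof is correct, but it takes a genuinely different route from the paper's. The paper proves the lemma by induction on the length $n$ of a semi-geodesic representative $B=\varphi_{U_1}\cdots\varphi_{U_n}$ of $\eta(A)$: it replaces the pair $\varphi_{U_1}\varphi_{U_2}$ by $\varphi_{V_1}\varphi_{V_2}$ with $V_1$ the longest $\Sb$-prefix of $U_1U_2$, applies the inductive hypothesis to the tail $\varphi_{V_2}\varphi_{U_3}\cdots\varphi_{U_n}$, and reassembles. You instead start from the unique left-greedy representative $A'$ (whose existence/uniqueness the paper asserts just before the lemma) and prove it is semi-geodesic directly by the classical ``greedy stays ahead'' exchange argument, using two facts you correctly isolate: $\Sb$ is closed under subwords (so ``$W_i x\notin\Sb$'' upgrades to ``$W_i$ is the \emph{longest} $\Sb$-prefix of the remaining suffix''), and every single generator lies in $\Sb$ (so blocks are nonempty, giving strict monotonicity of the partial lengths $a_j$). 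Both arguments are sound; yours is somewhat more self-contained and makes explicit the invariant $b_j\le a_j$ that the paper's inductive rewriting handles implicitly, whereas the paper's version is more compact and mirrors the inductive style used elsewhere in the section.
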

\begin{proof}
Let $n$ be the length of a semi-geodesic $B$ such that $\eta(A)=\eta(B)$. We prove the lemma by induction on $n$. If $n=1$ then there is nothing to prove. Suppose the lemma holds for $n-1$. Let $B = \varphi_{U_1} \cdots \varphi_{U_n}$ be a semi-geodesic as above. Consider the decomposition $U_1 U_2 = V_1 V_2$ where $V_1$ is the maximal prefix of $U_1 U_2$ such that $V_1\in\Sb$. Set $C = \varphi_{V_2} \varphi_{U_3} \cdots \varphi_{U_n}$. Then, $C$ is semi-geodesic and $|C|=n-1$. By induction, there is some $C'$ that is semi-geodesic, left-greedy, and $\eta(C)=\eta(C')$. Thus, $A' = \varphi_{V_1} C'$ is semi-geodesic, left-greedy, and $\eta(A)=\eta(A')$, as needed.
\end{proof}

By the uniqueness of the left-greedy representative, it follows from the above lemma that if $A \in \Phi^*$ is left-greedy then it is necessarily semi-geodesic. This fact is used in next lemma to check that a given element is semi-geodesic. It is useful to remember that $A = \varphi_{W_1}\cdots\varphi_{W_n}$ is left-greedy if and only $\varphi_{W_i}\varphi_{W_{i+1}}$ is left-greedy for all $1\leq i <n$.

\begin{lemma} \label{lem:multLeftGredElmts}
Let $A_0,\ldots,A_n$ and $B_1,\ldots,B_n$ be left-greedy elements of $\Phi^*$. Assume the following:
\begin{enumerate}
 \item If $\varphi_V$ is the first letter of $B_i$ then $\lp{V}\geq3$ and $V$ is a prefix of some element in $\R$.
 \item If $\varphi_V$ is the last letter of $B_i$ then $\lp{V}\geq3$ and $V$ is a suffix of some element in $\R$.
\end{enumerate}
Then, the element $C = A_0 B_1 A_1 \cdots B_n A_n$ is semi-geodesic.
\end{lemma}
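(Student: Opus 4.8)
The plan is to prove that semi-geodesic length is \emph{additive} over the given factorisation of $C$. For a word $Y\in X^*$ that factors into members of $\Sb$, let $g(Y)$ denote the least number of factors in such a factorisation; equivalently $g(Y)$ is the length of a semi-geodesic representative of $Y$ in $\Phi^*$, and an element $E\in\Phi^*$ is semi-geodesic exactly when $|E|=g(\eta(E))$. Concatenating factorisations gives $g(Y_1Y_2)\le g(Y_1)+g(Y_2)$; since each $A_i$ and each $B_j$ is left-greedy, hence semi-geodesic by Lemma~\ref{lem:leftGreedySemiGeodesic}, this yields $g(\eta(C))\le\sum_i|A_i|+\sum_j|B_j|=|C|$. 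So it suffices to prove the reverse inequality $g(\eta(C))\ge|C|$, for then $g(\eta(C))=|C|$ and $C$ is semi-geodesic. I would argue by induction on $n$: for $n=0$, $C=A_0$ is left-greedy, hence semi-geodesic; for the inductive step write $C=C_0B_nA_n$ with $C_0=A_0B_1\cdots A_{n-1}$, which is semi-geodesic by the induction hypothesis, and invoke the single-junction statement below with $D=C_0$, $B=B_n$, $A'=A_n$.

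The single-junction statement is: if $D$ is semi-geodesic, $B$ is left-greedy whose first letter $\varphi_V$ satisfies $\lp V\ge3$ with $V$ a prefix of a defining word and whose last letter $\varphi_{V'}$ satisfies $\lp{V'}\ge3$ with $V'$ a suffix of a defining word, and $A'$ is left-greedy, then $DBA'$ is semi-geodesic. To prove it I would fix a shortest factorisation $\Pi$ of $\eta(DBA')$ into members of $\Sb$ and show that, without changing the number of factors or the underlying word, $\Pi$ can be altered so that the two junction positions $p=|\eta(D)|$ and $q=|\eta(D)|+|\eta(B)|$ both become cut points. Once this is arranged, $\Pi$ restricts to factorisations of $\eta(D)$, $\eta(B)$ and $\eta(A')$, whence $|\Pi|\ge g(\eta(D))+g(\eta(B))+g(\eta(A'))$; together with subadditivity this forces $g(\eta(DBA'))=g(\eta(D))+g(\eta(B))+g(\eta(A'))=|D|+|B|+|A'|=|DBA'|$ (using that $B$ and $A'$ are left-greedy, hence semi-geodesic), so $DBA'$ is semi-geodesic.

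It remains to describe the alteration at $p$; the one at $q$ is mirror-symmetric. If $p$ is not already a cut point, exactly one factor $\varphi_Z$ of $\Pi$ straddles it; write $Z=Z_\ell Z_r$ with $Z_\ell$ a nonempty suffix of $\eta(D)$, $Z_r$ a nonempty prefix of $\eta(B)$, and $Z\in\Sb$. First, $Z_r$ is a \emph{proper} prefix of $V$: if $V$ were a prefix of $Z_r$ then $Z_\ell V\in\Sb$ (a subword of $Z$), and Observation~\ref{obs:sharedPieceLP1} applied to $Z_\ell V$ would give $\lp V=1$, contradicting $\lp V\ge3$; in particular $Z$ stays strictly inside $\eta(D)\eta(B)$. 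Since $Z_r$ is then a prefix of the defining word $R$ containing $V$, Observation~\ref{obs:sharedPieceLP1} gives $\lp{Z_r}=1$, and writing $V=Z_rV_2$ with $V_2\ne\eps$ we deduce $\lp{V_2}\ge2$ (otherwise $\lp V\le\lp{Z_r}+\lp{V_2}\le2$). Let $\varphi_{Z''}$ be the factor of $\Pi$ just after $\varphi_Z$; I would replace $\varphi_Z\,\varphi_{Z''}$ by $\varphi_{Z_\ell}\,\varphi_{Z_rZ''}$, which preserves the number of factors and the underlying word and makes $p$ a cut point, provided $Z_rZ''\in\Sb$. If $\varphi_{Z''}$ does not run past the end of $V$ then $Z_rZ''$ is a prefix of $V$, hence of $R$, hence in $\Sb$. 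Otherwise $Z''=V_2W_0$ with $W_0\ne\eps$; since $\lp{V_2}\ge2$, Observation~\ref{obs:twoPiecUniq} makes $R$ the unique defining word containing $Z''$, and $V_2$ occurs in $R$ only once (a second occurrence would make $V_2$ a piece, so $\lp{V_2}=1$) — namely as the tail of the prefix $V$ — so this occurrence of $V_2W_0$ in $R$ extends that prefix, giving $Z_rZ''=VW_0$, a prefix of $R$, so $Z_rZ''\in\Sb$.

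The alteration at $q$ is the same with ``prefix'' replaced by ``suffix'', $\varphi_V$ by $\varphi_{V'}$, and ``next factor'' by ``previous factor'', using the suffix halves of Observations~\ref{obs:sharedPieceLP1} and~\ref{obs:twoPiecUniq}; since $p$ is a cut point by then, no factor straddles $p$, so the second alteration stays to the right of $p$ and leaves it intact. The degenerate case where $B$ is a single letter is absorbed into the same argument. The step I expect to be the main obstacle is precisely this ``overshoot'' sub-case at $p$ — controlling $\varphi_{Z''}$ when it runs past $V$ — for which the piece-uniqueness arguments through Observations~\ref{obs:twoPiecUniq} and~\ref{obs:sharedPieceLP1} are the crucial ingredient.
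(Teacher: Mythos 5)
Your proof is correct, and it takes a genuinely different route from the paper's. The paper never looks at an abstract shortest factorisation: it observes that the only pairs $\varphi_{W_j}\varphi_{W_{j+1}}$ in $C$ that can fail to be left-greedy occur at an $A_i$-to-$B_{i+1}$ boundary (the $B_i$-to-$A_i$ boundary is ruled out by Observations~\ref{obs:twoPiecUniq} and~\ref{obs:sharedPieceLP1}, as you also found), that these bad positions are isolated, and that each can be replaced in place by a left-greedy pair $\varphi_{U_j}\varphi_{U_{j+1}}$ with $W_j W_{j+1}=U_j U_{j+1}$ in such a way that the pair to its right remains left-greedy. This produces a left-greedy word $C'$ with $\eta(C')=\eta(C)$ and $|C'|=|C|$; by the uniqueness of the left-greedy representative established after Lemma~\ref{lem:leftGreedySemiGeodesic}, $C'$ is the semi-geodesic of $\eta(C)$, so $C$ is semi-geodesic too. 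Your argument is the dual manipulation: you fix an arbitrary minimal $\Sb$-factorisation $\Pi$ of $\eta(C)$ and, using exactly the same small-overlap facts ($\lp{V}\geq3$ for the end letters of $B_i$ together with Observations~\ref{obs:sharedPieceLP1} and~\ref{obs:twoPiecUniq}), locally re-cut $\Pi$ so that the block boundaries become cut points, without increasing $|\Pi|$. This establishes additivity of $g$ across the decomposition, which is the reverse inequality $g(\eta(C))\geq |C|$. In effect the paper pushes $C$ towards the canonical factorisation, while you pull a hypothetical shorter factorisation towards $C$. Both work; the paper's version is a bit shorter because the uniqueness of the left-greedy form lets it dispense with the explicit lower bound and the induction on $n$, whereas yours isolates the additivity statement, which is a clean intermediate fact. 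One small thing worth making explicit in a final write-up is the point you already flag implicitly: after the $p$-alteration, a factor straddling $q$ must start strictly inside $V'$ (since its left part is a proper suffix of $V'$), which guarantees it is different from, and lies to the right of, the new factor $\varphi_{Z_r Z''}$ starting at $p$; that is what keeps the two alterations from interfering, even when $B$ is a single letter.
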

\begin{proof}
The proof is mostly routine. It follows for the conditions above that if $\varphi_{W_j}\varphi_{W_{j+1}}$ (where $1 \leq j < |C|$) are two consecutive letters in $C$ which are \emph{not} left-greedy then there is an index $i$ such that one of the following holds:
\begin{enumerate}
 \item $\varphi_{W_j}$ is the last letter of $B_i$ and $\varphi_{W_{j+1}}$ is the first letter of $A_i$. 
 \item $\varphi_{W_j}$ is the last letter of $A_i$ and $\varphi_{W_{j+1}}$ is the first letter of $B_{i+1}$. 
\end{enumerate}
The first case is impossible since we have that $\lp{W_j}\geq3$ and $W_j$ is a suffix of some unique element in $\R$ (uniqueness follows from Observation \ref{obs:twoPiecUniq}). Hence, $W_{j}$ is not a proper prefix of any element in $\Sb$ and thus $\varphi_{W_j} \varphi_{W_{j+1}}$ must be left-greedy. So, we are left with the second case. Since only the second case is possible it follows that if $\varphi_{W_j}\varphi_{W_{j+1}}$ is not left greedy and also $\varphi_{W_k}\varphi_{W_{k+1}}$ is not left greedy for some $1\leq j < k < n$ then $k\geq j+2$ (i.e., there is no overlap between the two pairs and moreover there is a gap of at least one letter between them). Consequently, if we can show that we can replace each pair of consecutive letters $\varphi_{W_j} \varphi_{W_{j+1}}$ with a \emph{left-greedy} pair $\varphi_{U_j} \varphi_{U_{j+1}}$ such that $W_j W_{j+1} = U_j U_{j+1}$ and also $\varphi_{U_{j+1}} \varphi_{W_{j+2}}$ is left-greedy then we can `fix' $C$ so it becomes left-greedy and thus we would show that $C$ is semi-geodesic. So, suppose we fix $\varphi_{W_j}\varphi_{W_{j+1}}$ into a left-greedy $\varphi_{U_j}\varphi_{U_{j+1}}$ such that $W_j W_{j+1} = U_j U_{j+1}$. This induces a decomposition of $W_{j+1} = W_{j+1}' W_{j+1}''$ where $W_{j+1}''=U_{j+1}$. Since $\lp{W_{j+1}}\geq3$ we have by Observation \ref{obs:sharedPieceLP1} that the piece-length of $W_{j+1}'$ is at most one and thus $\lp{U_{j+1}}\geq2$. Therefore by Observation \ref{obs:twoPiecUniq} there are unique $V\in\Sb$ and $R\in\R$ such that $U_{j+1}V$ is a suffix of $R$. Because $U_{j+1}$ is a suffix of $W_{j+1}$ we get that also $W_2V$ is a suffix of $R$. Now, $\varphi_{W_{j+1}}\varphi_{W_{j+2}}$ is left-greedy so also $\varphi_{U_{j+1}}\varphi_{W_{j+2}}$ is left greedy.
\end{proof}

Let $M$ be a $(\mu\xi,\sigma)$-thin diagram where $\xi$ is labelled by an element of $X \cup \Set{\eps}$. A \emph{fundamental decomposition} of $M$ is a decomposition $M = \rho_0 \cup M_1 \cup \rho_1 \cup \cdots \cup \rho_{k-1} \cup M_k \cup \rho_k$ such that $M_1, M_2, \ldots, M_k$ are the connected components of the closure of the interior of $M$ and $\rho_0, \rho_1, \ldots, \rho_k$ are the paths in the closure of $M \setminus \left( \cup_{j=1}^{k}M_k \right)$. The path $\rho_i$ connects $M_i$ to $M_{i+1}$ for $1\leq i < k$. The paths $\rho_0$ and $\rho_k$ my be empty (and in this case we think on them as being a single vertex) or otherwise only intersects $M_1$ and $M_k$, respectively. See Figure \ref{fig:fundDecomp}. The fundamental decomposition induces the definition of two elements as defined next.

\begin{figure}[ht]
\centering
\includegraphics[totalheight=0.11\textheight]{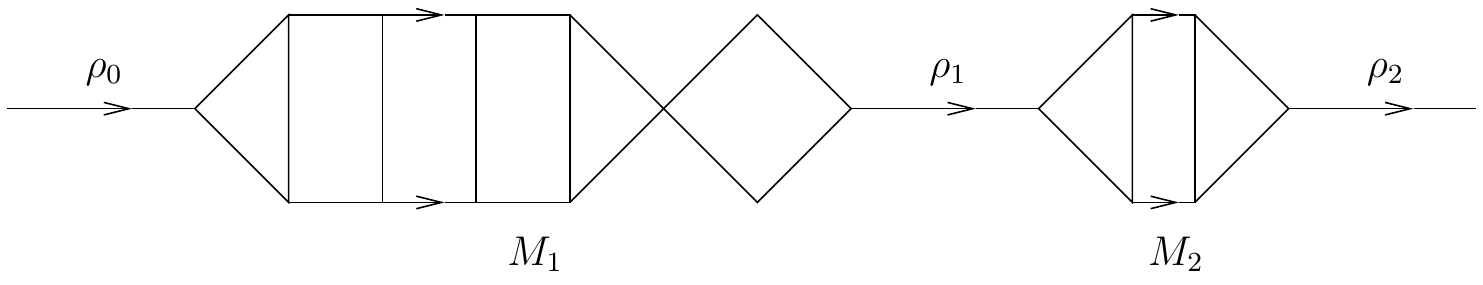}
\caption{Illustration of fundamental decomposition} \label{fig:fundDecomp}
\end{figure}

\begin{definition} \label{def:CMuSigma}
Let $M$ be a $(\mu\xi,\sigma)$-thin diagram where $\xi$ is labelled by an element of $X \cup \Set{\eps}$ and consider a fundamental decomposition $M = \rho_0 \cup M_1 \cup \rho_1 \cup \cdots \cup \rho_{k-1} \cup M_k \cup \rho_k$. We define two elements, $C_\mu$ and $C_\sigma$, of $\Phi^*$. $C_\mu$ is defined to be $C_\mu = C_{\rho_0} C^{\mu}_{M_1} C_{\rho_1} \cdots C_{\rho_{k-1}} C^{\mu}_{M_k} C_{\rho_k}$ where:
\begin{enumerate}
 \item $C_{\rho_j}$ is the left-greedy semi-geodesic element such that $\eta(C_{\rho_j})$ is the label of $\rho_j \cap \mu$
 \item Suppose that in $M_j$ the regions $D^j_1,D^j_2,\ldots,D^j_{N_j}$ have the property that $\partial D^j_i \cap \mu$ contains an edge for $1 \leq i \leq N_j$. We assume that the indexing of the regions corresponds to the order they intersect with $\mu$. We define $C^{\mu}_{M_j}$ to be the element $\varphi_{V_1} \cdots \varphi_{V_{N_J}}$ where $V_i$ is the label of $\partial D^j_i \cap \mu$.
\end{enumerate}
$C_\sigma$ is defined similarly by replacing $\mu$ with $\sigma$.
\end{definition}

In the next lemma we analyze the properties of the elements $C_\mu$ and $C_\sigma$ from Definition \ref{def:CMuSigma}. We will use the results of Lemma \ref{lem:essencialVert} for the proof.

\begin{lemma} \label{lem:propOfCMuSigma}
Let the notation be as in Definition \ref{def:CMuSigma} and assume that condition (\ddag) of Lemma \ref{lem:essencialVert} holds for the diagram $M$. Let $1\leq i \leq k$. Then:
\begin{enumerate}
	\item \label{lem:propOfCMuSigma:eq} $|C^{\mu}_{M_i}| = |C^{\sigma}_{M_i}|$. Moreover, suppose that $A$ is a prefix of $C^{\mu}_{M_i}$ of length $n$ and $B$ is a prefix of $C^{\sigma}_{M_i}$ of length $n$. Suppose further that $\delta$ and $\rho$ are subpaths of $\partial M_i$ which are labelled by $\eta(A)$ and $\eta(B)$, respectively. Then, the terminal vertices of $\delta$ and $\rho$ belong to the boundary of the same region in $M_i$.
	\item Suppose $i \neq k$. Then, if $\varphi_V$ is the first (resp., the last) letter of $C^{\mu}_{M_i}$ or $C^{\sigma}_{M_i}$ then $\lp{V}\geq3$. For $i=k$ the assertion holds for the first letter and holds for the last letter if $\rho_k$ is not empty.
	\item $C^{\mu}_{M_i}$ and $C^{\sigma}_{M_i}$ are left-greedy.
\end{enumerate}
\end{lemma}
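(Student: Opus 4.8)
The plan is to prove the three assertions in turn, in each case reducing to the ``linear chain'' structure of a thin diagram together with the facts already collected in Lemma~\ref{lem:essencialVert} (whose hypothesis~(\ddag) we are given). Fix $i$ and write $D^i_1,\dots,D^i_{N_i}$ for the regions of $M_i$, indexed as in Definition~\ref{def:CMuSigma} by the order in which they meet $\mu$. The first step is to record the combinatorics of $M_i$: since $M$ is $(\mu\xi,\sigma)$-thin every region has at most two neighbours, and since $M_i$ is a connected component of the closure of the interior no region of $M_i$ has a neighbour in another component or ``across'' one of the connecting paths $\rho_j$ (these contain no regions), so the regions of $M_i$ form, under the neighbour relation, a single chain $D^i_1-D^i_2-\dots-D^i_{N_i}$. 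By Lemma~\ref{lem:essencialVert}(\ref{lem:essencialVert:vt}) every region of $M_i$ meets both $\mu$ and $\sigma$ in an edge, and, ruling out vertices of valence at least $4$ on $\sigma$ by the same argument (legitimate since property~(b) of $K_3^2$ and condition~(\ddag) are symmetric in $\mu$ and $\sigma$), one finds that $\mu$ and $\sigma$ traverse the regions of $M_i$ in one and the same order $D^i_1,\dots,D^i_{N_i}$ --- the key points being that consecutive-along-$\mu$ (resp.\ along-$\sigma$) regions must be neighbours, and that near the left endpoint of $M_i$ the edges along which $\mu$ and $\sigma$ enter $M_i$ bound the same region. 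This immediately gives $|C^\mu_{M_i}|=|C^\sigma_{M_i}|=N_i$; and for prefixes $A,B$ of common length $n$, the path $\delta$ (a prefix of $\mu\cap M_i$) ends at the terminal vertex of $\partial D^i_n\cap\mu$ and $\rho$ (a prefix of $\sigma\cap M_i$) ends at the terminal vertex of $\partial D^i_n\cap\sigma$, both lying on $\partial D^i_n$. This proves assertion~(\ref{lem:propOfCMuSigma:eq}).

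For assertion~2, the first letter of $C^\mu_{M_i}$ is $\varphi_V$ with $V$ the label of $\partial D^i_1\cap\mu$; since $D^i_1$ is an endpoint of the chain it has at most one neighbour in $M$, and likewise $D^i_{N_i}$ for the last letter (and the corresponding $C^\sigma_{M_i}$-letters, by assertion~(\ref{lem:propOfCMuSigma:eq})). Provided the boundary of such an end region does not contain the edge $\xi$ --- which, by inspection of how $\xi$ sits at the right-hand end of the boundary cycle $\mu\xi\sigma^{-1}$, happens only for the last region of $M_k$ when $\rho_k$ is empty --- Lemma~\ref{lem:essencialVert}(\ref{lem:essencialVert:br}) (together with its $\mu\leftrightarrow\sigma$-symmetric analogue) gives $\lp{V}\geq3$. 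These are exactly the exceptions recorded in the statement; the single genuinely borderline configuration ($M_k$ a single region, $\rho_k$ empty) is handled directly from property~(b) of $K_3^2$ and condition~(\dag) by reading off the cyclic boundary label of that region.

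For assertion~3 it suffices to check that each consecutive pair $\varphi_{V_j}\varphi_{V_{j+1}}$ occurring in $C^\mu_{M_i}$ is left-greedy, i.e.\ that $V_j x\notin\Sb$ where $x$ is the first letter of $V_{j+1}$. The regions $D^i_j$ and $D^i_{j+1}$ are neighbours, with $V_j,V_{j+1}$ the labels of $\partial D^i_j\cap\mu$ and $\partial D^i_{j+1}\cap\mu$, so Lemma~\ref{lem:essencialVert}(\ref{lem:essencialVert:gr}) applies to $W=V_jV_{j+1}$: every prefix of $W$ that is a subword of a defining word has length at most $|V_j|$. Since $V_jx$ is a prefix of $W$ of length $|V_j|+1$, it cannot be a subword of a defining word, so $V_jx\notin\Sb$; hence $C^\mu_{M_i}$ is left-greedy, and $C^\sigma_{M_i}$ likewise via the symmetric version of Lemma~\ref{lem:essencialVert}(\ref{lem:essencialVert:gr}).

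The part I expect to require the most care is assertion~(\ref{lem:propOfCMuSigma:eq}): while the chain picture of a thin diagram is intuitively clear, making precise the claim that $\delta$ and $\rho$ terminate on the boundary of the \emph{same} region amounts to showing that $\mu$ and $\sigma$ enumerate the regions of $M_i$ in identical order, which is exactly where Lemma~\ref{lem:essencialVert}(\ref{lem:essencialVert:vt}) and a careful look at the two endpoints of $M_i$ are needed; the secondary nuisance is the bookkeeping around the edge $\xi$ at the right-hand end of the diagram that appears in assertion~2.
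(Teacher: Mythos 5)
Your proof is correct and follows essentially the same route as the paper's, which reads all three parts directly off Lemma \ref{lem:essencialVert}: part (\ref{lem:essencialVert:vt}) gives assertion 1, part (\ref{lem:essencialVert:br}) gives assertion 2, and part (\ref{lem:essencialVert:gr}) gives assertion 3. You spell out the chain structure of $M_i$, the common region-ordering along $\mu$ and $\sigma$ needed for the ``moreover'' clause, and the $\mu\leftrightarrow\sigma$ symmetry that the paper leaves implicit, but these are the same ideas in slightly more detail.
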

\begin{proof}
We use the notation of Definition \ref{def:CMuSigma}. Since we assumed that condition (\ddag) of Lemma \ref{lem:essencialVert} holds we can use its conclusions. By Part \ref{lem:essencialVert:vt} of Lemma \ref{lem:essencialVert} each region $D$ in $M$ has the property that $\partial D \cap \mu$ and $\partial D \cap \sigma$ contain an edge. This shows that if $D_1,\ldots,D_n$ are the regions of $M_i$ then by construction $C^{\mu}_{M_i} = \varphi_{V_1} \cdots \varphi_{V_n}$ and $V_j$ is the label of $\partial D_j \cap \mu$ for $1\leq j \leq n$. Consequently, $|C^{\mu}_{M_i}| = n$ and similarly $|C^{\sigma}_{M_i}| = n$ so we get the first part of the lemma (the `moreover' part follows along the same lines). By Part \ref{lem:essencialVert:br} of Lemma \ref{lem:essencialVert} we get that $\lp{V_1}\geq3$ and $\lp{V_n}\geq3$ when $i\neq k$. For $i=k$ the same holds for $V_1$ and it holds for $V_n$ if $\partial D_n \cap \mu$ does not contain $\xi$ which is the case if $\rho_k$ is not empty. This proves the second part of the lemma. Finally, by Part \ref{lem:essencialVert:gr} of Lemma \ref{lem:essencialVert} we get that $\varphi_{V_j}\varphi_{V_{j+1}}$ is left greedy for $1\leq j < k$ so consequently $C^{\mu}_{M_i}$ is left greedy. This proves the last part of the lemma.
\end{proof}

\begin{corollary} \label{cor:indElemIsSemiGeo}
Let the notation be as in Definition \ref{def:CMuSigma} and assume that condition (\ddag) of Lemma \ref{lem:essencialVert} holds for the diagram $M$. Then, $C_\mu$ and $C_\sigma$ are semi-geodesics.
\end{corollary}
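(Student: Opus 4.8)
The plan is to recognize $C_\mu$ (and symmetrically $C_\sigma$) as an element of the form treated in Lemma \ref{lem:multLeftGredElmts} and to quote that lemma. Fix the fundamental decomposition $M = \rho_0 \cup M_1 \cup \rho_1 \cup \cdots \cup \rho_{k-1} \cup M_k \cup \rho_k$ used in Definition \ref{def:CMuSigma}. Then $C_\mu = A_0 B_1 A_1 \cdots B_k A_k$ with $A_j := C_{\rho_j}$ for $0 \leq j \leq k$ and $B_i := C^{\mu}_{M_i}$ for $1 \leq i \leq k$, which is exactly the shape $C = A_0 B_1 A_1 \cdots B_n A_n$ appearing in Lemma \ref{lem:multLeftGredElmts} with $n = k$. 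So it suffices to verify the hypotheses of that lemma (using that the standing hypothesis (\ddag) of Lemma \ref{lem:propOfCMuSigma} is assumed here).

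First, left-greediness: each $A_j = C_{\rho_j}$ is left-greedy by Definition \ref{def:CMuSigma} (it is the left-greedy semi-geodesic representative of the label of $\rho_j \cap \mu$), and each $B_i = C^{\mu}_{M_i}$ is left-greedy by Part (3) of Lemma \ref{lem:propOfCMuSigma}. Second, the boundary-letter conditions on the $B_i$. If $\varphi_V$ is the first letter of $B_i = C^{\mu}_{M_i}$, then $V$ is the label of $\partial D^i_1 \cap \mu$, where $D^i_1$ is the first region of the chain $M_i$; Part (2) of Lemma \ref{lem:propOfCMuSigma} gives $\lp{V} \geq 3$. Moreover $D^i_1$, being first in its chain, has degenerate left inner side (a single vertex, the terminal vertex of $\rho_{i-1}$, which lies on both $\mu$ and $\sigma$); hence $\partial D^i_1 \cap \mu$ begins at the vertex where the two positive arcs of $\partial D^i_1$ meet, so $V$ is a prefix of the defining word labelling one of those arcs, i.e.\ $V$ is a prefix of an element of $\R$. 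The symmetric argument at the last region $D^i_{N_i}$ of $M_i$ — whose right inner side is degenerate whenever $i < k$, or $i = k$ with $\rho_k \neq \eps$ — shows the last letter $\varphi_V$ of $B_i$ satisfies $\lp{V} \geq 3$ and $V$ is a suffix of an element of $\R$. Thus Lemma \ref{lem:multLeftGredElmts} applies and $C_\mu$ is semi-geodesic; replacing $\mu$ by $\sigma$ throughout (Parts (2) and (3) of Lemma \ref{lem:propOfCMuSigma} are stated for $C^{\sigma}_{M_i}$ as well) yields that $C_\sigma$ is semi-geodesic.

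The one place needing care is the end-of-diagram case $\rho_k = \eps$ (and the mirror case $\rho_0 = \eps$): then $A_k = C_{\rho_k}$ is the empty word, so $B_k$ is the final block of $C_\mu$ and Part (2) of Lemma \ref{lem:propOfCMuSigma} no longer guarantees anything about the last letter of $B_k$. However, in this situation there is no letter following the last letter of $B_k$, and inspecting the proof of Lemma \ref{lem:multLeftGredElmts} the hypothesis on the last letter of $B_k$ is used only to left-greedify the (now non-existent) junction between $B_k$ and $A_k$; hence the conclusion still goes through. Since everything else is a straightforward assembly of Lemmas \ref{lem:propOfCMuSigma} and \ref{lem:multLeftGredElmts}, I do not anticipate a genuine obstacle beyond this boundary bookkeeping.
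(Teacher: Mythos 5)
Your proof is correct and follows essentially the same route as the paper: identify $C_\mu$ with the shape $A_0B_1A_1\cdots B_kA_k$ of Lemma \ref{lem:multLeftGredElmts} via $A_j=C_{\rho_j}$, $B_i=C^{\mu}_{M_i}$, and verify its hypotheses from Lemma \ref{lem:propOfCMuSigma} and the definition of $C^{\mu}_{M_i}$. Your explicit handling of the $\rho_k=\eps$ (and $\rho_0=\eps$) boundary case is a sensible addition: Part (2) of Lemma \ref{lem:propOfCMuSigma} is indeed silent about the last letter of $C^{\mu}_{M_k}$ when $\rho_k=\eps$, and the paper's proof passes over this; your observation that the last-letter hypothesis on $B_k$ is only invoked in the proof of Lemma \ref{lem:multLeftGredElmts} to left-greedify the junction with a nonempty $A_k$ correctly closes that gap.
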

\begin{proof}
We prove the corollary for $C_\mu$; the proof for $C_\sigma$ is similar. First notice that by the definition of $C^{\mu}_{M_i}$ above we get that it starts with a letter $\varphi_{V_1}$ such that $V_1$ is a prefix of some element in $\R$. Also, $C^{\mu}_{M_i}$ ends with a letter $\varphi_{V_{N_k}}$ such that $V_{N_k}$ is a suffix of some element in $\R$. It follows from Lemma \ref{lem:propOfCMuSigma} that the conditions of Lemma \ref{lem:multLeftGredElmts} holds when we set $A_i = C_{\rho_i}$ and $B_i = C^{\mu}_{M_i}$. Thus, $C_\mu = A_0 B_1 A_1 \cdots B_n A_n$ is semi-geodesic. 
\end{proof}

\begin{lemma}\label{lem:WmuAndWsigAre1FT}
Let the notation be as in Definition \ref{def:CMuSigma} and assume that condition (\ddag) of Lemma \ref{lem:essencialVert} holds for the diagram $M$. Then, $C_\mu$ and $C_\sigma$ are $1$-fellow-travellers.
\end{lemma}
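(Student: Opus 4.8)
The plan is to compare the prefixes $C_\mu(n)$ and $C_\sigma(n)$ block by block along the alternating structure $C_\mu = C_{\rho_0}C^\mu_{M_1}C_{\rho_1}\cdots C_{\rho_{k-1}}C^\mu_{M_k}C_{\rho_k}$ and $C_\sigma = C_{\rho_0}C^\sigma_{M_1}C_{\rho_1}\cdots C_{\rho_{k-1}}C^\sigma_{M_k}C_{\rho_k}$ of Definition~\ref{def:CMuSigma}. First I would observe that these two words have the \emph{same} sequence of block lengths. Each bridge or tail $\rho_j$ is traversed by $\mu$ and by $\sigma$ along the same edges with the same label (this is where thinness enters: since every region meets both $\mu$ and $\sigma$ by part~\ref{lem:essencialVert:vt} of Lemma~\ref{lem:essencialVert}, no portion of $\mu$ can loop around and enclose a blob), so the factor $C_{\rho_j}$ is literally the same element of $\Phi^*$ in $C_\mu$ and in $C_\sigma$ --- the unique left-greedy semi-geodesic representative of that common label, by Lemma~\ref{lem:leftGreedySemiGeodesic}. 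Moreover $|C^\mu_{M_i}|=|C^\sigma_{M_i}|$ for every $i$ by the first part of Lemma~\ref{lem:propOfCMuSigma}. I would also record that $\pi(C^\mu_{M_i})=\pi(C^\sigma_{M_i})$ for every $i<k$: the words $\eta(C^\mu_{M_i})$ and $\eta(C^\sigma_{M_i})$ label, respectively, the top and bottom boundary paths of the sub-diagram $M_i$, so they are equal in $S$ via the van Kampen diagram $M_i$ itself. Hence, for any $n$, the prefixes $C_\mu(n)$ and $C_\sigma(n)$ fall in the same block at the same offset.

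If that common block is some $C_{\rho_j}$, then $C_\mu(n)=C_\sigma(n)$ and $d_\Phi(C_\mu(n),C_\sigma(n))=0$. So assume the common block is the $i$-th ``blob block'' at offset $m$. Write $C_\mu(n)=PA$ and $C_\sigma(n)=QB$, where $P=C_{\rho_0}C^\mu_{M_1}\cdots C_{\rho_{i-1}}$, $Q=C_{\rho_0}C^\sigma_{M_1}\cdots C_{\rho_{i-1}}$, $A$ is the length-$m$ prefix of $C^\mu_{M_i}$, and $B$ the length-$m$ prefix of $C^\sigma_{M_i}$. By the previous paragraph $\pi(P)=\pi(Q)$, so in the co-presented group $\pi(C_\mu(n))^{-1}\pi(C_\sigma(n))=\overline{\eta(A)}^{-1}\overline{\eta(B)}$, and it suffices to show this element equals $\overline{V}$ or $\overline{V}^{-1}$ for some $V\in\Sb$, i.e.\ that $d_\Phi(\pi(A),\pi(B))\le1$. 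Let $\delta$ (resp.\ $\rho$) be the initial segment of the top (resp.\ bottom) boundary of $M_i$ labelled by $\eta(A)$ (resp.\ $\eta(B)$); then $\overline{\eta(A)}^{-1}\overline{\eta(B)}$ is the label of every path in $M_i$ from $t(\delta)$ to $t(\rho)$.

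Here I would invoke the ``moreover'' clause of the first part of Lemma~\ref{lem:propOfCMuSigma}: the vertices $p=t(\delta)$ and $q=t(\rho)$ lie on the boundary of a common region $D$ of $M_i$. Since $\delta$ runs along $\mu$ and $\rho$ along $\sigma$, and $p$, $q$ are precisely the junction vertices where consecutive region-tops (resp.\ region-bottoms) meet, $p$ is a top corner of $D$ and $q$ is the adjacent bottom corner on the same side of $D$; by thinness (so $D$ has at most one neighbour on that side) the arc of $\partial D$ joining them is a single connected component of $\partial D\cap\partial E$ for the appropriate neighbour $E$, hence a piece by Lemma~\ref{lem:goodVanKampenDiagK23} (and every piece lies in $\Sb$) --- or it is the single edge $\xi$, a generator and so in $\Sb$, or it is empty, in which case $p=q$. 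In all cases $\overline{\eta(A)}^{-1}\overline{\eta(B)}=\overline{V}$ for some $V\in\Sb\cup\{\eps\}$, whence $d_\Phi(C_\mu(n),C_\sigma(n))\le1$. The remaining range $n\ge|C_\mu|=|C_\sigma|$ is the special case $m=N_k$ of this analysis.

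The main obstacle is not conceptual --- all the real content is already packaged in Lemma~\ref{lem:propOfCMuSigma} --- but the topological corner-chasing: verifying that the region $D$ furnished by Lemma~\ref{lem:propOfCMuSigma} really has $p$ and $q$ as adjacent corners joined by a single piece, correctly treating the tails $\rho_0,\rho_k$ and the absorption of $\xi$ into the last region of $M_k$ when $\rho_k$ is empty, and confirming that the regions of $M_i$ are met by $\mu$ and by $\sigma$ in the same cyclic order so that $C^\mu_{M_i}$ and $C^\sigma_{M_i}$ index the same regions $D_1,\dots,D_{N_i}$ in the same way. Each of these points rests on thinness --- every region has at most two neighbours, one meeting $\mu$ and one meeting $\sigma$ --- together with part~\ref{lem:essencialVert:vt} of Lemma~\ref{lem:essencialVert}.
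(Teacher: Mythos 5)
Your proof takes essentially the same route as the paper's: compare the prefixes $C_\mu(n)$ and $C_\sigma(n)$ block by block using the equality $|C^{\mu}_{M_i}|=|C^{\sigma}_{M_i}|$ and the ``moreover'' clause of Part~\ref{lem:propOfCMuSigma:eq} of Lemma~\ref{lem:propOfCMuSigma}, which places the two terminal vertices on the boundary of a common region of $M_i$, so that they are joined by a path whose label lies in $\Sb\cup\{\eps\}$ and hence $d_\Phi(C_\mu(n),C_\sigma(n))\leq1$. The paper dispatches this as ``routine'' and you fill in the geometric corner-chasing, but the key lemma and the structure of the argument are the same. One small slip: in the bridge case you assert $C_\mu(n)=C_\sigma(n)$, but this is not an equality of words in $\Phi^*$, since the earlier blob factors $C^\mu_{M_j}$ and $C^\sigma_{M_j}$ are generally distinct; what holds (and is all you need) is $\pi(C_\mu(n))=\pi(C_\sigma(n))$, because both prefix-paths terminate at the same vertex of some $\rho_j$, which still gives $d_\Phi(C_\mu(n),C_\sigma(n))=0$. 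Also note that you do not actually need the arc joining $p$ and $q$ to be a single piece as you argue via $\partial D\cap\partial E$; it is enough, as the paper implicitly uses, that it is a subword of the label of $\partial D$ and therefore lies in $\Sb$.
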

\begin{proof}
We need to show that $d_\Phi(C_\mu(n),C_\sigma(n)) \leq 1$ for all $n$. This follows routinely from the construction of $C_\mu$ and $C_\sigma$. By Part \ref{lem:propOfCMuSigma:eq} of Lemma \ref{lem:propOfCMuSigma} we have that $|C^{\mu}_{M_j}| = |C^{\sigma}_{M_j}|$ for all $j$. Thus, again by Part \ref{lem:propOfCMuSigma:eq} of Lemma \ref{lem:propOfCMuSigma}, the subpaths of $\mu$ and $\sigma$ labelled by $\eta(C_\mu(n))$ and $\eta(C_\sigma(n))$, respectively, terminate at the same vertex (if it belongs to some $\rho_j$) or at vertices that belong to the boundary of the same region. Consequently, we have $V \in \Sb\cup\Set{\eps}$ such that either $\eta(C_\mu(n)) V = \eta(C_\sigma(n))$ or $\eta(C_\mu(n)) = \eta(C_\sigma(n)) V$. So, $\pi(C_\sigma(n)\varphi_V) = \pi(C_\sigma(n))$ or $\pi(C_\sigma(n)) = \pi(C_\sigma(n)\varphi_V)$. Consequently, $d_\Phi(C_\mu(n),C_\sigma(n)) \leq 1$.
\end{proof}

\begin{proof}[Proof of Proposition \ref{prop:zeroKappaToKFT}]

Suppose we are given two element $A,B \in \Phi^*$ such that the conditions of Proposition \ref{prop:zeroKappaToKFT} hold. Let $M$ be a van Kampen diagram with boundary path $\mu\xi\sigma^{-1}$ such that $\mu$ is labelled by $\eta(A)$, $\xi$ is empty or is labelled by some $x\in X$, and $\sigma$ is labelled by $\eta(B)$. Then, $M$ is $(\mu\xi,\sigma)$-thin diagram by Lemma \ref{lem:diagThinWhenKapNull}. By Corollary \ref{cor:condDdagHolds} we have that condition (\ddag) of Lemma \ref{lem:essencialVert} holds for $M$. Let $C_\mu$ and $C_\sigma$ be the elements induced from the fundamental decomposition (Definition \ref{def:CMuSigma}) of $M$ and for which $\eta(A)=\eta(C_\mu)$ and $\eta(B)=\eta(C_\sigma)$. By Corollary \ref{cor:indElemIsSemiGeo} we have that $C_\mu$ and $C_\sigma$ are semi-geodesic (which also implies that they are admissible). Therefore, it follows by Corollary \ref{cor:NonZeroKappaPropOfEtaA} that $C_\mu$ and $C_\sigma$ are efficient. Thus, by Lemma \ref{lem:semiGeoToFT} we have that $B$ and $C_\sigma$ are $1$-fellow-travellers (recall that $B$ is geodesic and $C_\sigma$ is semi-geodesic). By the same lemma, either $A$ and $C_{\mu}$ are $1$-fellow-travellers or we have $A'$ that $2$-refutes $A$ so the proposition is satisfied. Hence, we can assume that $A$ and $C_\mu$ are $1$-fellow-travellers. Using Lemma \ref{lem:WmuAndWsigAre1FT} we have that $C_\mu$ and $C_\sigma$ are $1$-fellow-travellers and thus $A$ and $B$ are $3$-fellow-travellers (using Lemma \ref{lem:twoKFTs}) which proves the proposition.
\end{proof}

\subsection{Refuting Inefficient Elements}

In this sub-section we prove Proposition \ref{prop:inefficientCanRefute}. Recall that by Lemma \ref{lem:adjoingWhenInS} the proposition follows for non-admissible elements (see Definition \ref{def:admissible}). Thus, the main difficulty is to prove Proposition \ref{prop:inefficientCanRefute} for admissible elements. We introduce the following definition, which allows us to prove Proposition \ref{prop:inefficientCanRefute} by induction.

\begin{definition}[$\ell$-Pacing Pair] \label{def:pacingPair}
Let $A=\varphi_{W_1}\cdots\varphi_{W_n}$ and $B=\varphi_{U_1}\cdots\varphi_{U_m}$ be in $\Phi^*$ such that $\kappa_A \neq \overline{0}$ and let $1 \leq \ell \leq |A|$. We say that $(A,B)$ is an \emph{$\ell$-pacing pair} if the following conditions hold:
\begin{description}
 \item \texttt{C1.} $A$ is admissible and $\cord{\kappa_A}{\ell}=1$.

 \item \texttt{C2.} $\pi(A) = \pi(B)$ and $|A| = |B|$.

 \item \texttt{C3.} There is an index $1\leq j\leq\ell$ such that:
 \begin{enumerate}
  \item $d_\Phi(A(i),B(i))=0$ for all $i<j$ and $i>\ell$.
  \item $d_\Phi(A(\ell),B(\ell))\leq1$.
  \item $d_\Phi(A(i),B(i))\leq2$ for all $j\leq i\leq\ell-1$.
 \end{enumerate}

 \item \texttt{C4.} If $\ell<n$ then $U_{\ell+1}$ is a subword of $W_{\ell+1}$ such that $\lp{U_{\ell+1}} \geq \lp{W_{\ell+1}} - 1$ and $U_{j} = W_{j}$ for all $\ell+2 \leq j \leq n$.

 \item \texttt{C5.} Either $B$ is not admissible or:
 \begin{enumerate}
  \item $\cord{\kappa_B}{\ell}<\cord{\kappa_A}{\ell}$
  \item For all $1\leq i\leq \ell-1$ we have $\cord{\kappa_B}{i} \leq \cord{\kappa_A}{i}$.
 \end{enumerate}
\end{description}
We say that $(A,B)$ is a \emph{pacing pair} if it is an $\ell$-pacing pair for some $\ell$. 
\end{definition}

First we show that if $(A,B)$ is a pacing pair then $A$ can be refuted. 

\begin{lemma}
Let $A \in \Phi^*$ be a admissible but not efficient and suppose $(A,B)$ is a pacing pair. Then, $A$ can be $3$-refuted.
\end{lemma}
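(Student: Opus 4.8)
The statement to prove is: if $A\in\Phi^*$ is admissible but not efficient and $(A,B)$ is a pacing pair, then $A$ can be $3$-refuted. The natural strategy is to unpack the five conditions \texttt{C1}--\texttt{C5} of Definition \ref{def:pacingPair} and show that either $B$ itself is a $3$-refutation of $A$, or $B$ is not admissible and we pass to the (shorter) admissible element obtained by collapsing the consecutive pair in $B$ guaranteed by non-admissibility, which then refutes $A$. Recall that ``$B$ $3$-refutes $A$'' means $\pi(A)=\pi(B)$, $B\prec A$, and $A,B$ are $3$-fellow-travellers. The first of these is exactly \texttt{C2}. The fellow-traveller bound is immediate from \texttt{C3}, since for every prefix length $i$ we have $d_\Phi(A(i),B(i))\le 2\le 3$ (the bound is $0$ outside $[j,\ell]$, at most $1$ at $i=\ell$, and at most $2$ in between). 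So the entire content is showing $B\prec A$, i.e. $\kappa_B=\kappa_A$ or $\kappa_B$ precedes $\kappa_A$ in shortlex order.

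\textbf{Showing $B\prec A$.} By \texttt{C2} we have $|A|=|B|$, so $\kappa_A$ and $\kappa_B$ have the same length and shortlex comparison reduces to lexicographic comparison of their entries. First dispose of the case where $B$ is not admissible: then by Lemma \ref{lem:adjoingWhenInS} there is an element $B'$ with $|B'|<|B|=|A|$, $\eta(B')=\eta(B)$, hence $\pi(B')=\pi(B)=\pi(A)$, and $B'$ and $B$ are $1$-fellow-travellers; combining with the $2$-fellow-travelling of $A$ and $B$ via Lemma \ref{lem:twoKFTs} gives that $A$ and $B'$ are $3$-fellow-travellers, and $|B'|<|A|$ forces $\kappa_{B'}$ to be strictly shorter than $\kappa_A$, so $B'\prec A$ and $B'$ $3$-refutes $A$. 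Now suppose $B$ is admissible. Then the second alternative of \texttt{C5} applies: $\cord{\kappa_B}{i}\le\cord{\kappa_A}{i}$ for all $1\le i\le\ell-1$, and $\cord{\kappa_B}{\ell}<\cord{\kappa_A}{\ell}$. Since $\cord{\kappa_A}{\ell}=1$ by \texttt{C1}, this says $\cord{\kappa_B}{\ell}=0$ while $\cord{\kappa_A}{\ell}=1$, and the earlier coordinates of $\kappa_B$ are all $\le$ those of $\kappa_A$. Hence at the first coordinate $i_0\le\ell$ where $\kappa_A$ and $\kappa_B$ differ, we have $\cord{\kappa_B}{i_0}=0<1=\cord{\kappa_A}{i_0}$ (such an $i_0$ exists because the coordinates agree up to the point where one is $0$ and the other is $1$, and $i=\ell$ is a witness that they are not identical). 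This is exactly what is needed for $\kappa_B$ to precede $\kappa_A$ in lexicographic/shortlex order, so $B\prec A$.

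\textbf{Main obstacle.} The delicate point is making sure the comparison of $\kappa_A$ and $\kappa_B$ only ever needs to look at coordinates $\le\ell$; coordinates beyond $\ell$ are governed by \texttt{C4} rather than \texttt{C5}. Here one should note that by \texttt{C4}, for $i\ge\ell+2$ the local data $(U_{i-1},U_i,U_{i+1})$ defining $\cord{\kappa_B}{i}$ coincides with $(W_{i-1},W_i,W_{i+1})$ (using $U_j=W_j$ for $j\ge\ell+2$ and, when $i=\ell+2$, that $U_{\ell+1}$ being a subword of $W_{\ell+1}$ with $\lp{U_{\ell+1}}\ge\lp{W_{\ell+1}}-1$ suffices to preserve whether coordinate $\ell+2$ is marked — this last point is the only real calculation and should be checked carefully, invoking Observations \ref{obs:twoPiecUniq} and \ref{obs:sharedPieceLP1} as in Lemma \ref{lem:whenKappaNotZero}), and the coordinate $\ell+1$ itself, if it matters, is controlled because any difference at position $\ell$ already decides the lexicographic comparison in favour of $B$. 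So once the first difference at position $i_0\le\ell$ is located, the tail is irrelevant. The proof is then complete: in all cases $B$ (or $B'$) satisfies $\pi=\pi(A)$, precedes $A$ in ``$\prec$'', and $3$-fellow-travels $A$, hence $3$-refutes $A$.
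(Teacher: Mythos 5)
Your proof is correct and follows essentially the same route as the paper: cite \texttt{C2} for $\pi(A)=\pi(B)$, \texttt{C3} for the fellow-traveller bound, split on whether $B$ is admissible, and in the admissible case use \texttt{C5} to force a strict lexicographic decrease at the first differing coordinate $i_0\le\ell$ (note the paper's written proof cites \texttt{C4} here, which appears to be a slip for \texttt{C5}). Your closing paragraph about coordinates beyond $\ell$ is harmless but unnecessary, since, as you yourself observe, once the first discrepancy is located at $i_0\le\ell$ with $\cord{\kappa_B}{i_0}<\cord{\kappa_A}{i_0}$, the shortlex comparison is already decided and the tail plays no role.
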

\begin{proof}
We use conditions \texttt{C2}, \texttt{C3}, and \texttt{C4} of Definition \ref{def:pacingPair}. By \texttt{C2} we have that $\pi(A) = \pi(B)$. By \texttt{C3} we have that $A$ and $B$ are $2$-fellow-travellers. If $B$ is admissible then by \texttt{C4} we have that $\kappa_B$ precedes $\kappa_A$ in shortlex order so $B \prec A$. Thus, $B$ $2$-refutes $A$. On the other hand, if $B$ is not admissible then by Lemma \ref{lem:adjoingWhenInS} there is an element $C$ such that $|C|<|B|$, $\pi(C)=\pi(B)$, and $C$ and $B$ are $1$-fellow-travellers (since $C$ $1$-refutes $B$). This imply that $C$ $3$-refutes $A$ because (i) $|C|<|B|=|A|$ and thus $C \prec A$; (ii)  $\pi(C)=\pi(B)=\pi(A)$; (iii) $C$ and $A$ are $3$-fellow-travellers (this follows from Lemma \ref{lem:twoKFTs} since $A$ and $B$ are $2$-fellow-travellers and $B$ and $C$ are $1$-fellow-travellers).
\end{proof}

We complete the proof of Proposition \ref{prop:inefficientCanRefute} by proving the following proposition:

\begin{proposition} \label{prop:partOfPacingPair}
Let $A \in \Phi^*$ be an admissible but not efficient. Then, there is $B\in\Phi^*$ such that $(A,B)$ is a pacing pair.
\end{proposition}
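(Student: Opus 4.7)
The plan is to apply the relation $L=R$ from Definition \ref{def:auxVect} at a chosen coordinate of $\kappa_A$ and re-parse the resulting word as an element of $\Phi^*$ of the same length as $A$. Write $A = \varphi_{W_1}\cdots\varphi_{W_n}$, let $\ell$ be the smallest index with $\cord{\kappa_A}{\ell}=1$, and fix decompositions $W_{\ell-1}=W_{\ell-1}'W_{\ell-1}''$ and $W_{\ell+1}=W_{\ell+1}'W_{\ell+1}''$ such that $L := W_{\ell-1}''W_\ell W_{\ell+1}' \in \R$ and $\lp{L}>\lp{R}$ with $R := \Comp{L}$. A preliminary remark: admissibility of $A$, applied to the would-be subwords $W_{\ell-1}W_\ell$ and $W_\ell W_{\ell+1}$ of $L$, rules out $W_{\ell-1}'=\eps$ (when $\ell\geq 2$) and $W_{\ell+1}''=\eps$ (when $\ell\leq n-1$).

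Define
\[
B := \varphi_{W_1}\cdots\varphi_{W_{\ell-2}}\,\varphi_{W_{\ell-1}'}\,\varphi_R\,\varphi_{W_{\ell+1}''}\,\varphi_{W_{\ell+2}}\cdots\varphi_{W_n},
\]
dropping $\varphi_{W_{\ell-1}'}$ when $\ell=1$ and $\varphi_{W_{\ell+1}''}$ when $\ell=n$. Each letter is a non-empty element of $\Sb$ and $|B|=n=|A|$, so (C1)--(C2) are immediate (the latter from a single application of $L=R$). For (C3), the prefixes $A(i)$ and $B(i)$ agree in $S$ for $i<\ell-1$ and $i\geq \ell+1$, while at $i=\ell-1,\ell$ they differ by multiplication by $\overline{W_{\ell-1}''}$ and $\overline{W_{\ell+1}'}$ respectively, each a single element of $\Sb$, so $d_\Phi \leq 1$ there and one may take $j=\ell-1$. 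For (C4), Observation \ref{obs:sharedPieceLP1} applied to $W_{\ell+1}=W_{\ell+1}'W_{\ell+1}''$ (using that $W_{\ell+1}'$ is a suffix of the defining word $L$) gives $\lp{W_{\ell+1}'}\leq 1$, hence $\lp{U_{\ell+1}}=\lp{W_{\ell+1}''}\geq\lp{W_{\ell+1}}-1$.

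The substantive step is (C5), which I handle assuming $B$ is admissible (else the conclusion is immediate). First, $\cord{\kappa_B}{\ell}=0$ because any witness would be a defining word $T\in\R$ containing $R$ as a subword, which by Observation \ref{obs:twoPiecUniq} (using $\lp{R}\geq 3\geq 2$) forces $T=R$, and then $\lp{T}=\lp{R}<\lp{L}=\lp{\Comp{T}}$ contradicts the piece-length requirement. For $i\leq \ell-3$ the three letters around position $i$ coincide in $A$ and $B$, so $\cord{\kappa_B}{i}=\cord{\kappa_A}{i}$; for $i=\ell-2$ any prefix of $W_{\ell-1}'$ is a prefix of $W_{\ell-1}$, so any decomposition witnessing $\cord{\kappa_B}{\ell-2}=1$ also witnesses $\cord{\kappa_A}{\ell-2}=1$. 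Combined with the minimality of $\ell$ (so $\cord{\kappa_A}{i}=0$ for $i<\ell$), this gives the required bounds in these cases.

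The main obstacle is showing $\cord{\kappa_B}{\ell-1}=0$. A witness here is $T':=P_2\,W_{\ell-1}'\,R_1\in\R$ coming from decompositions $W_{\ell-2}=P_1 P_2$ and $R=R_1 R_2$ with $\lp{T'}>\lp{\Comp{T'}}$. The plan is to bound $\lp{P_2},\lp{W_{\ell-1}'},\lp{R_1}\leq 1$ using Observation \ref{obs:sharedPieceLP1} (treating them respectively as a prefix, an interior segment, and a suffix of the defining word $T'$, and using Observation \ref{obs:twoPiecUniq} together with admissibility of $A$ to rule out the degenerate subcases $P_1=\eps$ or $R_2=\eps$). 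This yields $\lp{T'}\leq 3$, and then condition $(\dagger)$ forces $\lp{\Comp{T'}}\geq 4>\lp{T'}$. The delicate subcase is $\lp{W_{\ell-1}'}\geq 2$: by Observation \ref{obs:twoPiecUniq} the unique defining word containing $W_{\ell-1}'$ as a subword is $W_{\ell-1}$, so $T'=W_{\ell-1}$; then the minimality of $\ell$ (which gives $\cord{\kappa_A}{\ell-1}=0$ via the trivial decomposition) directly yields $\lp{T'}=\lp{W_{\ell-1}}\leq\lp{\Comp{W_{\ell-1}}}=\lp{\Comp{T'}}$. This subcase analysis for $\cord{\kappa_B}{\ell-1}=0$ is where the real work lies; every other part of the verification is mechanical.
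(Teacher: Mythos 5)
Your construction of $B$ is exactly the paper's Construction \ref{cnt:fixAtL} (``fixing $A$ at location $\ell$''), and your verification of \texttt{C1}--\texttt{C4}, of $\cord{\kappa_B}{\ell}=0$, and of $\cord{\kappa_B}{i}\le\cord{\kappa_A}{i}$ for $i\le\ell-2$ all track the paper's Lemmas \ref{lem:basisOfInd}, \ref{lem:contRedKappa} and \ref{lem:kappaRedAftrConstr}. Where you genuinely diverge is in trying to prove $\cord{\kappa_B}{\ell-1}=0$ outright (using minimality of $\ell$) instead of doing the paper's induction on $\ell$, and that is where there is a real gap.

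In your ``delicate subcase'' $\lp{W_{\ell-1}'}\ge 2$ you assert that ``the unique defining word containing $W_{\ell-1}'$ as a subword is $W_{\ell-1}$, so $T'=W_{\ell-1}$''. This is false: $W_{\ell-1}$ is an element of $\Sb$ (a subword of a defining word), not an element of $\R$ (a defining word), so it is not a candidate for the ``unique defining word'' of Observation \ref{obs:twoPiecUniq}, and the conclusion $T'=W_{\ell-1}$ does not follow. Moreover, the ``trivial decomposition'' argument for $\cord{\kappa_A}{\ell-1}$ requires $W_{\ell-1}\in\R$, which you have no reason to expect; if $W_{\ell-1}\notin\R$ then $\Comp{W_{\ell-1}}$ is not even defined. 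What the hypothesis $\lp{W_{\ell-1}'}\ge 2$ actually yields (via Observation \ref{obs:twoPiecUniq} and $K_3^2$(a), which forces $W_{\ell-1}''=\eps$) is only $T'=P_2\,W_{\ell-1}\,R_1$ with $R_1$ a prefix of $R=\Comp{L}$. Since $L$ and $R$ begin with different generators, a nonempty $R_1$ can never be a prefix of $W_\ell$, so $T'$ cannot be transported into a witness for $\cord{\kappa_A}{\ell-1}=1$, and minimality of $\ell$ gives you no contradiction. In other words, the situation $\cord{\kappa_B}{\ell-1}=1$ while $\cord{\kappa_A}{\ell-1}=0$ is not excluded by your argument; it is precisely the situation the paper anticipates and handles by induction on $\ell$, applying the ``fix'' again to $B$ at position $\ell-1$ and then checking (using $\lp{V_\ell}\ge 2$ and Observation \ref{obs:twoPiecUniq}) that the resulting $C$ still satisfies \texttt{C5} relative to $A$. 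Your write-up of that subcase therefore does not close, and without it $(A,B)$ may fail condition \texttt{C5}.

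A minor further point: for ruling out $P_1=\eps$ and $R_2=\eps$ you cite admissibility of $A$, but the relevant products are $W_{\ell-2}W_{\ell-1}'=U_{\ell-2}U_{\ell-1}$ and $W_{\ell-1}'R=U_{\ell-1}U_\ell$, so what you actually need is admissibility of $B$ (which you are already assuming in the \texttt{C5} discussion, so this is fixable, but as written the citation is wrong).
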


The proof is broken into several lemmas. We begin by deriving some technical information in the situation where $\cord{\kappa_A}{\ell} = 1$.

\begin{lemma} \label{lem:leftExtNotToLong}
Let $A=\varphi_{W_1}\cdots\varphi_{W_n} \in \Phi^*$ be admissible. Suppose there is an index $1< \ell\leq n$ where $W_{\ell-1}$ has a decomposition $W_{\ell-1} = W_{\ell-1}' W_{\ell-1}''$ such that $W_{\ell-1}''W_\ell$ is in $\Sb$. Then, $\lp{W_{\ell-1}''} \leq 1$. Similarly, if $W_{\ell}$ has a decomposition $W_{\ell} = W_{\ell}' W_{\ell}''$ such that $W_{\ell-1} W_\ell'$ is in $\Sb$ then $\lp{W_{\ell}'} \leq 1$.
\end{lemma}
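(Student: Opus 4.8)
The plan is to reduce to the first assertion (the second is symmetric, with the roles of prefix and suffix interchanged) and to argue by contradiction, assuming $\lp{W_{\ell-1}''}\geq 2$. First I would record two easy preliminary remarks. The decomposition $W_{\ell-1}=W_{\ell-1}'W_{\ell-1}''$ must be proper, i.e.\ $W_{\ell-1}'\neq\eps$: otherwise $W_{\ell-1}''W_\ell=W_{\ell-1}W_\ell$ would lie in $\Sb$, contradicting the admissibility of $A$. Likewise, admissibility (together with $\eps\in\Sb$, and $n\geq2$ since $\ell>1$) forces every letter $W_i$ of $A$ to be non-empty, so in particular $W_\ell\neq\eps$.

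The key step is to exploit that a subword of a defining word with piece-length at least two sits in a \emph{unique} defining word and in a \emph{unique} position there --- this is exactly the content of Observation~\ref{obs:twoPiecUniq} and its proof, since any ambiguity in the flanking words would exhibit $W_{\ell-1}''$ as a piece and force $\lp{W_{\ell-1}''}=1$. Applying this to $W_{\ell-1}''$ yields $R\in\R$ and a factorization $R=U_1\,W_{\ell-1}''\,U_2$ which is the only occurrence of $W_{\ell-1}''$ among all defining words. Next I would locate both $W_{\ell-1}$ and $W_{\ell-1}''W_\ell$ inside this $R$. The word $W_{\ell-1}=W_{\ell-1}'W_{\ell-1}''$ lies in $\Sb$ (it is a letter of $A\in\Phi^*$), hence is a subword of a defining word, which must be $R$; and the occurrence of its suffix $W_{\ell-1}''$ inside $R$ is the unique occurrence of $W_{\ell-1}''$, which forces $W_{\ell-1}'$ to be a suffix of $U_1$. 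Symmetrically, $W_{\ell-1}''W_\ell\in\Sb$ is a subword of $R$, and the occurrence of its prefix $W_{\ell-1}''$ is again the unique one, forcing $W_\ell$ to be a prefix of $U_2$. Splicing these, $W_{\ell-1}W_\ell=W_{\ell-1}'\,W_{\ell-1}''\,W_\ell$ is a subword of $U_1\,W_{\ell-1}''\,U_2=R$, so $W_{\ell-1}W_\ell\in\Sb$ --- contradicting the admissibility of $A$. Hence $\lp{W_{\ell-1}''}\leq1$.

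The one point that needs a little care --- and the step I would write out most carefully --- is the ``same occurrence'' bookkeeping: from the fact that $W_{\ell-1}$ genuinely occurs as a subword of $R$ with its suffix $W_{\ell-1}''$ landing on the unique copy of $W_{\ell-1}''$, one must deduce that the prefix $W_{\ell-1}'$ actually fits to the left, i.e.\ is a suffix of $U_1$ (and symmetrically for $W_\ell$ and $U_2$); this uses only that the occurrences do not spill past the ends of $R$. Everything else is routine, and with the two preliminary remarks in hand no separate case analysis on empty subwords is needed. As an alternative to quoting Observation~\ref{obs:twoPiecUniq}, one can run the argument directly from the definition of a piece: two occurrences of $W_{\ell-1}''$ --- one extending to $W_{\ell-1}$ on the left, one extending to $W_\ell$ on the right --- would have to share left and right contexts, which is precisely what glues $W_{\ell-1}W_\ell$ into a single defining word.
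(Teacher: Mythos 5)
Your proof is correct, and it is essentially the paper's argument in a slightly different packaging: the paper fixes two occurrences of $W_{\ell-1}''$ (with contexts coming from $W_{\ell-1}$ and from $W_{\ell-1}''W_\ell$) and observes that admissibility forces the left contexts to differ, which makes $W_{\ell-1}''$ a piece outright, whereas you run the contrapositive through Observation~\ref{obs:twoPiecUniq} to pin down the unique occurrence and then splice $W_{\ell-1}W_\ell$ into a single defining word — as you yourself note in your closing remark, these are the same argument. Your two preliminary remarks ($W_{\ell-1}'\neq\eps$ and $W_\ell\neq\eps$) are valid but not actually needed: the splicing works verbatim when $W_{\ell-1}'=\eps$, and the case $W_{\ell-1}''=\eps$ is already excluded by your contradiction hypothesis $\lp{W_{\ell-1}''}\ge 2$.
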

\begin{proof}
We prove the first case; the other case is similar. The lemma follows trivially if $W_{\ell-1}''$ is the empty word so assume $W_{\ell-1}'' \neq \eps$. Take $V_1,V_2$ and $U_1,U_2$ such that $V_1 W_{\ell-1}' W_{\ell-1}'' V_2 \in \R$ and $U_1 W_{\ell-1}'' W_\ell U_2 \in \R$. Since by admissibilty $W_{\ell-1}' W_{\ell-1}'' W_\ell$ is not in $\Sb$ we get that $U_1 \neq V_1 W_{\ell-1}'$. Hence, $W_{\ell-1}''$ is a piece and consequently, $\lp{W_{\ell-1}''} = 1$.
\end{proof}

\begin{lemma} \label{lem:PSizeAtleast3andMore}
Let $A=\varphi_{W_1}\cdots\varphi_{W_n} \in \Phi^*$ be admissible and suppose that $\cord{\kappa_A}{\ell}=1$ for some $1\leq\ell\leq n$. Let $W_{\ell-1} = W_{\ell-1}' W_{\ell-1}''$ and $W_{\ell+1} = W_{\ell+1}' W_{\ell+1}''$ be the decompositions
guaranteed by the definition of $\kappa_A$. Then, $\lp{W_{\ell-1}'' W_\ell W_{\ell+1}'} \geq 4$ and also $\lp{W_{\ell-1}''}, \lp{W_{\ell+1}'}\leq 1$.
\end{lemma}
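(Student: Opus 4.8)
The plan is to expand Definition~\ref{def:auxVect} at the $\ell$-th coordinate, extract the defining word it produces, and then combine the $K_3^2$ condition, admissibility of $A$, and Lemma~\ref{lem:leftExtNotToLong}.

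Concretely, write $L := W_{\ell-1}'' W_\ell W_{\ell+1}'$ for the word supplied by the hypothesis $\cord{\kappa_A}{\ell}=1$; by Definition~\ref{def:auxVect} we have $L \in \R$ and $\lp{L} > \lp{\Comp{L}}$. For the first assertion, observe that $\Comp{L}$ is also a defining word, so condition (b) of $K_3^2$ gives $\lp{\Comp{L}} \geq 3$; since piece-lengths are integers, $\lp{L} > \lp{\Comp{L}} \geq 3$ forces $\lp{L} \geq 4$, which is precisely $\lp{W_{\ell-1}'' W_\ell W_{\ell+1}'} \geq 4$. (Note that condition (\dag) is not actually needed for this particular bound.)

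For $\lp{W_{\ell-1}''}\le 1$: if $\ell = 1$ then $W_{\ell-1}=W_0=\eps$, and if $W_{\ell-1}''=\eps$ there is nothing to prove, so assume $\ell\ge 2$ and $W_{\ell-1}''\ne\eps$. The word $W_{\ell-1}'' W_\ell$ is a subword of $L\in\R$, hence lies in $\Sb$, so Lemma~\ref{lem:leftExtNotToLong} applied to the decomposition $W_{\ell-1}=W_{\ell-1}'W_{\ell-1}''$ yields $\lp{W_{\ell-1}''}\le 1$ directly. (Admissibility of $A$ --- a hypothesis of that lemma --- in fact forces $W_{\ell-1}'\ne\eps$, since otherwise $W_{\ell-1}W_\ell=W_{\ell-1}''W_\ell\in\Sb$, contradicting admissibility at index $\ell-1$; one could equally invoke Observation~\ref{obs:sharedPieceLP1}, using that $W_{\ell-1}''$ is a nonempty prefix of the defining word $L$.) The bound $\lp{W_{\ell+1}'}\le 1$ is symmetric: for $\ell=n$ or $W_{\ell+1}'=\eps$ it is trivial, and otherwise $W_\ell W_{\ell+1}'$ is a subword of $L$, hence in $\Sb$, and the second half of Lemma~\ref{lem:leftExtNotToLong} (with the index $\ell+1$ in place of $\ell$) gives $\lp{W_{\ell+1}'}\le 1$.

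I do not expect a genuine obstacle here --- the argument is a short bookkeeping exercise. The only points requiring a moment's care are the degenerate cases $\ell\in\{1,n\}$, where $W_0$ or $W_{n+1}$ is the empty word, and noticing that the ``subword of a defining word'' hypotheses needed to feed Lemma~\ref{lem:leftExtNotToLong} (or Observation~\ref{obs:sharedPieceLP1}) are all supplied by the single membership $L\in\R$.
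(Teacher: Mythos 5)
Your proof is correct and follows the same route as the paper: the bound $\lp{W_{\ell-1}''W_\ell W_{\ell+1}'}\geq4$ from $K_3^2$(b) applied to the complement, and Lemma~\ref{lem:leftExtNotToLong} (via admissibility) for the two piece-length-one bounds. You spell out the degenerate cases $\ell\in\{1,n\}$ and the justification that $W_{\ell-1}''W_\ell$, $W_\ell W_{\ell+1}'$ lie in $\Sb$, which the paper leaves implicit, but the argument is the same.
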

\begin{proof}
By the definition of $\kappa_A$ and the $K_3^2$ condition we have that
\[
\lp{W_{\ell-1}'' W_\ell W_{\ell+1}'} > \lp{\Comp{W_{\ell-1}'' W_\ell W_{\ell+1}'}} \geq 3
\]
Hence, $\lp{W_{\ell-1}'' W_\ell W_{\ell+1}'} \geq 4$. By admissibility and Lemma \ref{lem:leftExtNotToLong} we have that $\lp{W_{\ell-1}''} \leq 1$ and that $\lp{W_{\ell+1}'} \leq 1$.
\end{proof}

\begin{lemma} \label{lem:uniquenessOfComp}
Let $A=\varphi_{W_1}\cdots\varphi_{W_n} \in \Phi^*$ be admissible and suppose that $\cord{\kappa_A}{\ell} = 1$ for some $1\leq\ell\leq n$. Then there are \emph{unique} decompositions $W_{\ell-1} = W_{\ell-1}' W_{\ell-1}''$ and $W_{\ell+1} = W_{\ell+1}' W_{\ell+1}''$ such that $W_{\ell-1}'' W_\ell W_{\ell+1}' \in \R$. 
\end{lemma}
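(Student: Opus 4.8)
The plan is to extract from $[\kappa_A]_\ell=1$ the single fact that $\lp{W_\ell}\ge 2$, and then to invoke the rigidity statement preceding Observation~\ref{obs:twoPiecUniq} --- a subword $W$ of a defining word with $\lp{W}\ge2$ fixes a \emph{unique} pair $U_1,U_2$ with $U_1WU_2\in\R$ --- to pin down both decompositions at once.

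First I would apply Lemma~\ref{lem:PSizeAtleast3andMore} to the decompositions $W_{\ell-1}=W_{\ell-1}'W_{\ell-1}''$ and $W_{\ell+1}=W_{\ell+1}'W_{\ell+1}''$ guaranteed by the definition of $\kappa_A$ (this is legitimate since $A$ is admissible and $[\kappa_A]_\ell=1$). It yields $\lp{W_{\ell-1}''W_\ell W_{\ell+1}'}\ge4$ together with $\lp{W_{\ell-1}''}\le1$ and $\lp{W_{\ell+1}'}\le1$; since piece-length is subadditive, $4\le 1+\lp{W_\ell}+1$, hence $\lp{W_\ell}\ge2$. Thus $W_\ell$ is a subword of a defining word with piece-length at least two, so by the discussion before Observation~\ref{obs:twoPiecUniq} there is a unique $R\in\R$ of which $W_\ell$ is a subword and a unique pair $U_1,U_2$ with $U_1W_\ell U_2=R$.

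Next I would take an arbitrary pair of decompositions $W_{\ell-1}=A'A''$, $W_{\ell+1}=B'B''$ with $A''W_\ell B'\in\R$ and show it must equal the one above. Since $\lp{W_\ell}\ge2$, the defining word $A''W_\ell B'$ is the unique $R$ containing $W_\ell$, and by uniqueness of the pair fixed by $W_\ell$ we get $A''=U_1=W_{\ell-1}''$ and $B'=U_2=W_{\ell+1}'$. Reading this off, $A''$ is forced to be the suffix of $W_{\ell-1}$ of length $|U_1|$ and $B'$ the prefix of $W_{\ell+1}$ of length $|U_2|$, so $A'=W_{\ell-1}'$ and $B''=W_{\ell+1}''$ as well, and the pair of decompositions is unique. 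Existence of at least one valid pair is exactly what the definition of $\kappa_A$ provides, and the boundary cases $\ell=1$ and $\ell=n$ are absorbed by the convention $W_0=W_{n+1}=\eps$ (which simply forces the trivial decomposition of the empty word).

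I do not anticipate a genuine difficulty here. The one subtlety worth flagging is that the lemma claims uniqueness among \emph{all} decompositions of $W_{\ell-1}$ and $W_{\ell+1}$, not merely those inheriting the extremal piece-length inequality from the definition of $\kappa_A$; consequently the argument must use only the \emph{existence} of the $\kappa_A$-decomposition, with all of the uniqueness coming from $\lp{W_\ell}\ge2$ and Observation~\ref{obs:twoPiecUniq}.
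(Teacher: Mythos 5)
Your proof matches the paper's own argument: both derive $\lp{W_\ell}\geq2$ from Lemma~\ref{lem:PSizeAtleast3andMore} plus subadditivity of piece-length, and then invoke the rigidity observed just before Observation~\ref{obs:twoPiecUniq} to pin down the unique $R$ and hence the unique decompositions. Your flagged subtlety (that uniqueness must hold over all decompositions, not only the $\kappa_A$-extremal ones) is correct and is indeed implicitly handled by the paper's argument in exactly the way you describe.
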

\begin{proof}
It follows by Lemma \ref{lem:PSizeAtleast3andMore} that $\lp{W_{\ell-1}'' W_\ell W_{\ell+1}'} \geq 4$,
$\lp{W_{\ell-1}''} \leq 1$, and $\lp{W_{\ell+1}'}\leq1$. Now, $\lp{W_{\ell-1}''} + \lp{W_\ell} + \lp{W_{\ell+1}'} \geq \lp{W_{\ell-1}'' W_\ell W_{\ell+1}'}$ which show that $\lp{W_\ell}\geq 2$. Consequently, we have by Observation \ref{obs:twoPiecUniq} that there is a unique element $R\in\R$ that contains $W_\ell$ as a subword. This shows the uniqueness of the decompositions.
\end{proof}

Construction \ref{cnt:fixAtL} (below) is used in the inductive step of the proof of Proposition \ref{prop:partOfPacingPair}. 

\begin{construction}[Fixing $A$ at location $\ell$] \label{cnt:fixAtL}
Let $A=\varphi_{W_1}\cdots\varphi_{W_n} \in \Phi^*$ be admissible. Suppose that for some index $\ell$, where $1\leq\ell\leq n$, we have that $\cord{\kappa_A}{\ell}=1$. We construct an element $B=\varphi_{U_1}\cdots\varphi_{U_n}$ in the following way which we denote as ``fixing $A$ at location $\ell$''. It follows from Lemma \ref{lem:uniquenessOfComp} that since $\cord{\kappa_A}{\ell} = 1$ there are unique decompositions $W_{\ell-1} = W_{\ell-1}' W_{\ell-1}''$ and $W_{\ell+1} = W_{\ell+1}' W_{\ell+1}''$ such that $W_{\ell-1}'' W_\ell W_{\ell+1}' \in \R$ and $\lp{W_{\ell-1}'' W_\ell W_{\ell+1}'} > \lp{\Comp{W_{\ell-1}'' W_\ell W_{\ell+1}'}}$. Then, $B$ is defined by setting: $U_{\ell-1} = W_{\ell-1}'$, $U_{\ell} = \Comp{W_{\ell-1}'' W_\ell W_{\ell+1}'}$, $U_{\ell+1} = W_{\ell+1}''$, and $U_i=W_i$ for $i\neq\ell-1,\ell,\ell+1$. If $\ell=1$ or $\ell=n$ then $W_{0}$ or $W_{n+1}$, respectively, are undefined so we just ignore these indices.
\end{construction}

\begin{remark} \label{rem:AfterFixEqAndFT}
Suppose $B$ is constructed from $A$ by fixing $A$ at location $\ell$ (Construction \ref{cnt:fixAtL}). Here are some immediate consequences of the construction which are relevant to the definition of pacing pairs (the notation of Construction \ref{cnt:fixAtL} is used).
\begin{enumerate}[i)]
	\item \label{rem:AfterFixEqAndFT:eq} Clearly, $|A|=|B|$ and $\pi(A)=\pi(B)$.
	\item \label{rem:AfterFixEqAndFT:ft} We have $\pi(A(j)) = \pi(B(j))$ for all $1\leq j\leq n$ excluding $j=\ell-1$ and $j=\ell$ (recall that $A(j)$ is the prefix of $A$ of length $j$). This shows that $d_\Phi(A(j),B(j))=0$ for all $j<\ell-1$ and $j>\ell$. In addition, if $W_{\ell-1}''$ is not empty then we have the equality $\pi(B(\ell-1) \varphi_{(W_{\ell-1}'')}) = \pi(A(\ell-1))$ (if $W_{\ell-1}''$ is empty then we have the equality $\pi(B(\ell-1)) = \pi(A(\ell-1))$). Similarly,  if $W_{\ell+1}'$ is not empty then $\pi(A(\ell) \varphi_{W_{\ell+1}'}) = \pi(B(\ell))$. Therefore, $d_\Phi(A(j),B(j)) \leq 1$ for $j=\ell-1$ and $j=\ell$. Consequently, $A$ and $B$ are $1$-fellow-travellers.
	\item \label{rem:AfterFixEqAndFT:we} Following the details of the construction we have that $W_{\ell+1}$ is equal to $W_{\ell+1}' U_{\ell+1}$ and $U_j=W_j$ for all $j\geq\ell+2$. So, $U_{\ell+1}$ is a suffix of $W_{\ell+1}$. Also, $\lp{W_{\ell+1}'}\leq1$ by Lemma \ref{lem:leftExtNotToLong} so $\lp{W_{\ell+1}} \leq 1 + \lp{U_{\ell+1}}$ and we get that $\lp{U_{\ell+1}} \geq \lp{W_{\ell+1}} - 1$.
	\item \label{rem:AfterFixEqAndFT:cmp} We have $\lp{U_\ell} < \lp{\Comp{U_\ell}}$. This follows since $U_{\ell} = \Comp{W_{\ell-1}'' W_\ell W_{\ell+1}'}$ so $\Comp{U_\ell} = W_{\ell-1}'' W_\ell W_{\ell+1}'$. Thus, $\lp{U_\ell} < \lp{W_{\ell-1}'' W_\ell W_{\ell+1}'} = \lp{\Comp{U_\ell}}$.
\end{enumerate}
\end{remark}

After applying Construction \ref{cnt:fixAtL} to an inefficient element $A \in \Phi^*$ we get an element $B \in \Phi^*$ such that the pair $(A,B)$ is \emph{almost} a pacing pair. Specifically, out of the five conditions in the definition of a pacing pair (Definition \ref{def:pacingPair}) the first four conditions always hold. This is the content of the next lemma. Afterward, we give three special situation where the last condition (the fifth one) also hold (Lemma \ref{lem:trivCases}).

\begin{lemma} \label{lem:basisOfInd}
Suppose that $A \in \Phi^*$ is admissible and $\cord{\kappa_A}{\ell} = 1$ for some $1\leq\ell\leq |A|$. Suppose further that we construct $B$ by fixing $A$ at location $\ell$. The first four properties of an $\ell$-pacing pair hold for the pair $(A,B)$.
\end{lemma}
\begin{proof}
We check the first four conditions one by one (see Definition \ref{def:pacingPair}).
\begin{description}
	\item Condition \texttt{C1}: By our assumption $A$ is admissible and $\cord{\kappa_A}{\ell} = 1$.
	\item Condition \texttt{C2}: Follows from Remark \ref{rem:AfterFixEqAndFT} - part \ref{rem:AfterFixEqAndFT:eq}.
	\item Condition \texttt{C3}: Follows from Remark \ref{rem:AfterFixEqAndFT} - part \ref{rem:AfterFixEqAndFT:ft} by taking $j=\ell-1$ if $\ell>1$ or $j=\ell$ if $\ell=1$.
	\item Condition \texttt{C4}: Follows from Remark \ref{rem:AfterFixEqAndFT} - part \ref{rem:AfterFixEqAndFT:we}.
\end{description}
\end{proof}

\begin{lemma} \label{lem:contRedKappa}
Let $A \in \Phi^*$ be admissible and suppose that $\cord{\kappa_A}{\ell} = 1$ for some $1\leq\ell\leq |A|$. If we construct $B$ by fixing $A$ at location $\ell$ (Construction \ref{cnt:fixAtL}) then $\cord{\kappa_B}{\ell}=0$. In particular, $\cord{\kappa_B}{\ell} < \cord{\kappa_A}{\ell}$.
\end{lemma}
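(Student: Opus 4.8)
The plan is to argue by contradiction, using the uniqueness statement of Observation \ref{obs:twoPiecUniq} together with the strict piece-length inequality recorded in part \ref{rem:AfterFixEqAndFT:cmp} of Remark \ref{rem:AfterFixEqAndFT}. Recall that in Construction \ref{cnt:fixAtL} the new letter at position $\ell$ is $\varphi_{U_\ell}$ with $U_\ell = \Comp{W_{\ell-1}'' W_\ell W_{\ell+1}'}$; this is a defining word, so by condition (b) of $K_3^2$ it has $\lp{U_\ell} \geq 3$, and by part \ref{rem:AfterFixEqAndFT:cmp} of Remark \ref{rem:AfterFixEqAndFT} it satisfies $\lp{U_\ell} < \lp{\Comp{U_\ell}}$. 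These are the only facts about the construction we will need.

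Suppose, for contradiction, that $\cord{\kappa_B}{\ell} = 1$. By Definition \ref{def:auxVect} there are then decompositions $U_{\ell-1} = U_{\ell-1}' U_{\ell-1}''$ and $U_{\ell+1} = U_{\ell+1}' U_{\ell+1}''$ such that $R := U_{\ell-1}'' U_\ell U_{\ell+1}'$ lies in $\R$ and $\lp{R} > \lp{\Comp{R}}$. (In the boundary cases $\ell=1$ or $\ell=n$ the word $U_{\ell-1}$ or $U_{\ell+1}$ is empty, which only forces the corresponding decomposition to be trivial and does not affect what follows.)

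The key step is to show that $R = U_\ell$. Indeed, $U_\ell$ is a subword of the defining word $R$, and it is also trivially a subword of the defining word $U_\ell$; since $\lp{U_\ell} \geq 2$, Observation \ref{obs:twoPiecUniq} guarantees that the element of $\R$ containing $U_\ell$ as a subword is unique, whence $R = U_\ell$. Comparing lengths then gives $U_{\ell-1}'' = \eps$ and $U_{\ell+1}' = \eps$, so $\lp{R} = \lp{U_\ell} < \lp{\Comp{U_\ell}} = \lp{\Comp{R}}$, contradicting $\lp{R} > \lp{\Comp{R}}$. Therefore $\cord{\kappa_B}{\ell} = 0$, and since $\cord{\kappa_A}{\ell} = 1$ we also obtain $\cord{\kappa_B}{\ell} < \cord{\kappa_A}{\ell}$.

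I do not expect a genuine obstacle here: the whole argument rests on the observation that the only defining word in which $U_\ell$ can sit is $U_\ell$ itself, after which the strict inequality coming from the construction closes the case at once. The only point deserving a line of care is the treatment of the extreme indices $\ell = 1$ and $\ell = n$, and as noted above the reasoning there is unchanged because the decomposition of the missing neighbour is forced to be trivial.
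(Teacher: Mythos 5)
Your proof is correct and follows essentially the same argument as the paper: both assume for contradiction that $\cord{\kappa_B}{\ell}=1$, note that $U_\ell \in \R$ forces the would-be defining word $U_{\ell-1}''U_\ell U_{\ell+1}'$ to equal $U_\ell$, and then derive a contradiction with the strict inequality $\lp{U_\ell} < \lp{\Comp{U_\ell}}$ from Remark \ref{rem:AfterFixEqAndFT}. You merely make explicit the appeal to Observation \ref{obs:twoPiecUniq}, which the paper leaves implicit.
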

\begin{proof}
Suppose $B=\varphi_{U_1}\cdots\varphi_{U_n}$. Using the notation of Construction \ref{cnt:fixAtL} we have that $U_\ell\in\R$. If we assume by contradiction that $\cord{\kappa_B}{\ell}=1$ then there are unique decompositions $U_{\ell-1} = U_{\ell-1}' U_{\ell-1}''$ and $U_{\ell+1} = U_{\ell+1}' U_{\ell+1}''$ such that $U_{\ell-1}'' U_\ell U_{\ell+1}' \in \R$. Also, $\lp{U_{\ell-1}'' U_\ell U_{\ell+1}'} > \lp{\Comp{U_{\ell-1}'' U_\ell U_{\ell+1}'}}$. Since $U_\ell \in \R$ we get that $U_{\ell-1}'' U_\ell U_{\ell+1}' = U_\ell$ so $\lp{U_\ell} > \lp{\Comp{U_\ell}}$. This leads to a contradiction since $\lp{U_\ell} < \lp{ \Comp{ U_\ell } }$ by Remark \ref{rem:AfterFixEqAndFT} - part \ref{rem:AfterFixEqAndFT:cmp}.
\end{proof}

\begin{lemma} \label{lem:kappaRedAftrConstr}
Let $A \in \Phi^*$ be admissible and suppose that $\cord{\kappa_A}{\ell} = 1$ for some $1\leq\ell\leq |A|$. Suppose further that we construct $B$ by fixing $A$ at location $\ell$ (Construction \ref{cnt:fixAtL}) and $B$ is admissible. Then, $\cord{\kappa_B}{j} \leq \cord{\kappa_A}{j}$ for all $j < \ell-1$.
\end{lemma}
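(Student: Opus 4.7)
The plan is to split into two cases on the index $j$. Write $A = \varphi_{W_1}\cdots\varphi_{W_n}$ and $B = \varphi_{U_1}\cdots\varphi_{U_n}$ as in Construction~\ref{cnt:fixAtL}; the only indices where $U_i$ differs from $W_i$ are $i \in \{\ell-1, \ell, \ell+1\}$, and in particular $U_{\ell-1} = W_{\ell-1}'$ is a (possibly proper) prefix of $W_{\ell-1}$, with $W_{\ell-1} = W_{\ell-1}'\, W_{\ell-1}''$. If $\ell \leq 2$ the range $1 \leq j \leq \ell - 2$ is empty and there is nothing to prove, so I would assume $\ell \geq 3$ and fix $j$ with $1 \leq j \leq \ell - 2$.

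For $j \leq \ell - 3$ the three relevant letters $U_{j-1}, U_j, U_{j+1}$ coincide with $W_{j-1}, W_j, W_{j+1}$ (using the convention $U_0 = W_0 = \eps$ when $j = 1$). Hence the conditions in Definition~\ref{def:auxVect} defining $\cord{\kappa_B}{j}$ and $\cord{\kappa_A}{j}$ are literally identical, so $\cord{\kappa_B}{j} = \cord{\kappa_A}{j}$ in this case.

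The only remaining case is $j = \ell - 2$, which is where the construction actually touches the relevant letters: $U_{j-1} = W_{\ell-3}$ and $U_j = W_{\ell-2}$ are unchanged, but $U_{j+1} = W_{\ell-1}'$ is a prefix of $W_{\ell-1}$. Assume $\cord{\kappa_B}{\ell-2} = 1$. By Definition~\ref{def:auxVect} there are decompositions $U_{\ell-3} = X_1 X_2$ and $U_{\ell-1} = Y_1 Y_2$ such that $R := X_2\, U_{\ell-2}\, Y_1 \in \R$ and $\lp{R} > \lp{\Comp{R}}$. Since $U_{\ell-3} = W_{\ell-3}$ and $U_{\ell-2} = W_{\ell-2}$, and since the decomposition $U_{\ell-1} = Y_1 Y_2$ extends to a decomposition $W_{\ell-1} = Y_1 \cdot (Y_2\, W_{\ell-1}'')$ of the longer word, the very same element $X_2\, W_{\ell-2}\, Y_1 = R$ witnesses $\cord{\kappa_A}{\ell-2} = 1$.

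The main conceptual point, and there is no real obstacle beyond unfolding definitions, is that shortening $W_{\ell-1}$ to its prefix $W_{\ell-1}'$ in passing from $A$ to $B$ can only restrict the set of admissible decompositions at position $\ell - 2$, so any certificate for $\cord{\kappa_B}{\ell-2} = 1$ automatically lifts to a certificate for $\cord{\kappa_A}{\ell-2} = 1$. Combining the two cases gives $\cord{\kappa_B}{j} \leq \cord{\kappa_A}{j}$ for every $j < \ell - 1$.
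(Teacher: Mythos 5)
Your proof is correct and mirrors the paper's argument: indices $j \le \ell-3$ are handled by observing that the letters $W_{j-1}, W_j, W_{j+1}$ are untouched by the construction, and the boundary case $j=\ell-2$ is handled by observing that $U_{\ell-1}=W_{\ell-1}'$ is a prefix of $W_{\ell-1}$, so any witness decomposition for $\cord{\kappa_B}{\ell-2}=1$ extends to one for $\cord{\kappa_A}{\ell-2}=1$. This is exactly the paper's reasoning, just slightly more explicit about the case split and the extension of the decomposition.
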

\begin{proof}
Let $A=\varphi_{W_1}\cdots\varphi_{W_n}$ and $B=\varphi_{U_1}\cdots\varphi_{U_n}$. Since $U_j = W_j$ for all $j < \ell-1$ we get that $\cord{\kappa_B}{j} \leq \cord{\kappa_A}{j}$ for all $j < \ell-2$ (actually there is an equality). This is also true for $j=\ell-2$ by the following reasons. We need to show that if $\cord{\kappa_B}{\ell-2}=1$ then also $\cord{\kappa_A}{\ell-2} = 1$. So assume that $\cord{\kappa_B}{\ell-2}=1$. By the definition of the auxiliary vectors, there are decompositions $U_{\ell-3} = U_{\ell-3}' U_{\ell-3}''$ and $U_{\ell-1} = U_{\ell-1}' U_{\ell-1}''$ such that $\lp{U_{\ell-3}'' U_{\ell-2} U_{\ell-1}'} > \lp{\Comp{U_{\ell-3}'' U_{\ell-2} U_{\ell-1}'}}$. But, $U_{\ell-1}$ is a prefix of $W_{\ell-1}$ and so there similar decompositions for $W_{\ell-3}$ and $W_{\ell-1}$ and thus $\cord{\kappa_A}{\ell-2}=1$.
\end{proof}

\begin{lemma} \label{lem:trivCases}
Suppose that $A \in \Phi^*$ is admissible and $\cord{\kappa_A}{\ell} = 1$ for some $1\leq\ell\leq |A|$. Suppose further that we construct $B$ by fixing $A$ at location $\ell$.
If we have that one of the following hold then $(A,B)$ is an $\ell$-pacing pair:
\begin{enumerate}[(a)]
	\item \label{lem:trivCases:bn} $B$ is not admissible.
	\item \label{lem:trivCases:l1} $\ell=1$.
	\item \label{lem:trivCases:ba} $B$ is admissible and $\cord{\kappa_B}{\ell-1} \leq \cord{\kappa_A}{\ell-1}$.
\end{enumerate}
\end{lemma}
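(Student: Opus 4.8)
The plan is to reduce the whole statement to the verification of the single condition \texttt{C5} of Definition \ref{def:pacingPair}. Indeed, in each of the three cases the element $B$ is obtained from $A$ by fixing at location $\ell$ (Construction \ref{cnt:fixAtL}), with $A$ admissible and $\cord{\kappa_A}{\ell}=1$; these are precisely the hypotheses of Lemma \ref{lem:basisOfInd}, so conditions \texttt{C1}--\texttt{C4} of an $\ell$-pacing pair hold automatically for $(A,B)$. Hence in all three cases it remains only to check \texttt{C5}, and this will follow from Lemmas \ref{lem:contRedKappa} and \ref{lem:kappaRedAftrConstr} together with the respective case hypotheses.

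First I would dispose of case (a): if $B$ is not admissible, then the first alternative of \texttt{C5} (``either $B$ is not admissible\dots'') holds outright, so \texttt{C5} is satisfied and $(A,B)$ is an $\ell$-pacing pair. A useful side effect is that in the remaining two cases I may assume $B$ is admissible, since otherwise case (a) already applies. For case (b), $\ell=1$, I then verify the second alternative of \texttt{C5}: part (2) demands $\cord{\kappa_B}{i}\le\cord{\kappa_A}{i}$ over $1\le i\le\ell-1$, an empty range when $\ell=1$, hence vacuous; part (1) demands $\cord{\kappa_B}{\ell}<\cord{\kappa_A}{\ell}$, which is exactly the ``in particular'' clause of Lemma \ref{lem:contRedKappa} (using $\cord{\kappa_A}{\ell}=1$). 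For case (c), with $B$ admissible and $\cord{\kappa_B}{\ell-1}\le\cord{\kappa_A}{\ell-1}$, I again check the second alternative: part (1), $\cord{\kappa_B}{\ell}<\cord{\kappa_A}{\ell}$, is Lemma \ref{lem:contRedKappa}; for part (2) I split the range $1\le i\le\ell-1$, covering the indices $i<\ell-1$ by Lemma \ref{lem:kappaRedAftrConstr} (which requires $B$ admissible, available here) and the single index $i=\ell-1$ by the hypothesis of case (c). Thus \texttt{C5} holds in all three cases, and with Lemma \ref{lem:basisOfInd} each pair $(A,B)$ is an $\ell$-pacing pair.

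I do not expect a genuine obstacle: the lemma is essentially an assembly of the preceding results. The only point needing care is the bookkeeping — making sure that in cases (b) and (c) one first routes through case (a) whenever $B$ fails to be admissible, and that the empty-range convention in part (2) of \texttt{C5} is invoked correctly when $\ell=1$.
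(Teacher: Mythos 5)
Your proof is correct and follows the paper's own argument exactly: it reduces the verification to condition \texttt{C5} via Lemma \ref{lem:basisOfInd}, then handles case (a) trivially, case (b) by Lemma \ref{lem:contRedKappa} together with the vacuous range $1\le i\le\ell-1$, and case (c) by Lemma \ref{lem:contRedKappa} for index $\ell$ plus Lemma \ref{lem:kappaRedAftrConstr} and the case hypothesis for the remaining indices. This is precisely the paper's proof, merely written out in more detail.
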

\begin{proof}
In any of these instances the pair $(A,B)$ is a pacing pair since the last condition (\texttt{C5}) of Definition \ref{def:pacingPair} hold (the other conditions hold by Lemma \ref{lem:basisOfInd}). This follows trivially for (\ref{lem:trivCases:bn}), from Lemma \ref{lem:contRedKappa} for (\ref{lem:trivCases:l1}), and from Lemma \ref{lem:contRedKappa} and Lemma \ref{lem:kappaRedAftrConstr} for (\ref{lem:trivCases:ba}).
\end{proof}

We are now ready to complete the proof of Proposition \ref{prop:partOfPacingPair}

\begin{proof}[Proof of Proposition \ref{prop:partOfPacingPair}]
Suppose we are given some admissible element $A=\varphi_{W_1}\cdots\varphi_{W_n} \in \Phi^*$ and an index $\ell$ such that $\cord{\kappa_A}{\ell}=1$ (where, $1\leq\ell\leq|A|$). We show that there is an element $A=\varphi_{U_1}\cdots\varphi_{U_n} \in \Phi^*$ such that $(A,B)$ is an $\ell$-pacing pair. We prove by induction on $\ell$. If $\ell=1$ then we can construct $B$ by fixing $A$ at location $1$ (Construction \ref{cnt:fixAtL}) and by Lemma \ref{lem:trivCases} (\ref{lem:trivCases:l1}) we get that $(A,B)$ is a pacing pair.

Suppose $\ell>1$ and assume that the lemma holds for $\ell-1$. We construct $B$ by fixing $A$ at location $\ell$. If $B$ is not admissible we are done by Lemma \ref{lem:trivCases} (\ref{lem:trivCases:bn}). Hence we can assume that $B$ is admissible. By Lemma \ref{lem:contRedKappa} we have that $\cord{\kappa_B}{\ell} < \cord{\kappa_A}{\ell}$ and we are done if $\cord{\kappa_B}{\ell-1} \leq \cord{\kappa_A}{\ell-1}$ by Lemma \ref{lem:trivCases} (\ref{lem:trivCases:ba}). Hence, we may assume that $\cord{\kappa_B}{\ell-1} > \cord{\kappa_A}{\ell-1}$ so necessarily $\cord{\kappa_B}{\ell-1} = 1$ and $\cord{\kappa_A}{\ell-1} = 0$. Using induction hypothesis on $B$ (which is admissible and $\cord{\kappa_B}{\ell-1}=1$) we get an element $C=\varphi_{V_1}\cdots\varphi_{V_n}$ such that $(B,C)$ is an $(\ell-1)$-pacing pair. We claim that $(A,C)$ is an $\ell$-pacing pair. We verify this by checking the conditions of the definition of an $\ell$-pacing pair (Definition \ref{def:pacingPair}), one by one. 
\begin{description}
 \item \underline{Condition \texttt{C1}}. By assumption, $A$ is admissible and $\cord{\kappa_A}{\ell} = 1$.
 
 \item \underline{Condition \texttt{C2}}. Recall that we constructed $B$ by fixing $A$ at location $\ell$ and that $(B,C)$ is a pacing pair. Thus, we have the following two facts:
\begin{enumerate}[(i)]
	\item By Remark \ref{rem:AfterFixEqAndFT} - part \ref{rem:AfterFixEqAndFT:eq} we have the equalities $|A|=|B|$ and $\pi(A)=\pi(B)$; and,
	\item By condition \texttt{C2} of Definition \ref{def:pacingPair} we have the equalities $|B|=|C|$ and $\pi(B)=\pi(C)$.
\end{enumerate}
Consequently, $|A|=|C|$ and $\pi(A)=\pi(C)$.

 \item \underline{Condition \texttt{C3}}. $(B,C)$ is an $(\ell-1)$-pacing pair and thus by condition \texttt{C3} there is some index $1\leq j\leq\ell-1$ such that:
 \begin{enumerate}
  \item $d_\Phi(B(i),C(i))=0$ for all $i<j$ and $i>\ell-1$.
  \item $d_\Phi(B(\ell-1),C(\ell-1))\leq1$.
  \item $d_\Phi(B(i),C(i))\leq2$ for all $j\leq i\leq\ell-2$.
 \end{enumerate}
$B$ was constructed by fixing $A$ at location $\ell$ hence $d_\Phi(A(i),B(i)) = 0$ for all $i<\ell-1$ and $i>\ell$ (Remark \ref{rem:AfterFixEqAndFT} - part \ref{rem:AfterFixEqAndFT:ft}). It follows therefore that $d_\Phi(A(i),C(i)) \leq d_\Phi(A(i),B(i)) +  d_\Phi(B(i),C(i)) \leq 0 + 2 = 2$ for all $j\leq i\leq\ell-2$. Again by Remark \ref{rem:AfterFixEqAndFT} - part \ref{rem:AfterFixEqAndFT:ft}, we have $d_\Phi(A(\ell-1),B(\ell-1)) \leq 1$ which implies that $d_\Phi(A(\ell-1),C(\ell-1)) \leq d_\Phi(A(\ell-1),B(\ell-1)) +  d_\Phi(B(\ell-1),C(\ell-1)) \leq 1 + 1 = 2$. Also, using the remark again, we have $d_\Phi(A(\ell),C(\ell)) \leq d_\Phi(A(\ell),B(\ell)) + d_\Phi(B(\ell),C(\ell)) \leq 1 + 0 = 1$. Finally, by similar consideration, we have that $d_\Phi(A(i),C(i)) \leq d_\Phi(A(i),B(i)) +  d_\Phi(B(i),C(i)) \leq 0 + 0 = 0$ for all $i<j$ and $i>\ell$.

 \item \underline{Condition \texttt{C4}}. For $j\geq \ell+2$ we have that $W_j=U_j=V_j$ (using the induction hypothesis and the fact that $B$ was constructed from $A$ by fixing it at location $\ell$). Moreover, $V_{\ell+1} =  U_{\ell+1}$ and also $U_{\ell+1}$ is a subword of $W_{\ell+1}$ such that $\lp{U_{\ell+1}} \geq \lp{W_{\ell+1}} - 1$ (Remark \ref{rem:AfterFixEqAndFT} - part \ref{rem:AfterFixEqAndFT:we}) and thus $V_{\ell+1}$ is a subword of $W_{\ell+1}$ such that $\lp{V_{\ell+1}} \geq \lp{W_{\ell+1}} - 1$.

 \item \underline{Condition \texttt{C5}}. Assume $C$ is admissible. Since $(B,C)$ is an $(\ell-1)$-pacing pair we have that $\cord{\kappa_C}{\ell-1} < \cord{\kappa_B}{\ell-1}$ and $\cord{\kappa_C}{i} \leq \cord{\kappa_B}{i}$ for all $1\leq i \leq \ell-2$. Hence, $\cord{\kappa_C}{\ell-1} = 0 = \cord{\kappa_A}{\ell-1}$ (last equality follow from our assumptions above). By Lemma \ref{lem:kappaRedAftrConstr} we have that $\cord{\kappa_B}{j} \leq \cord{\kappa_A}{j}$ for all $j < \ell-1$. Thus, $\cord{\kappa_C}{j} \leq \cord{\kappa_A}{j}$ for all $j < \ell-1$. To finish, we need to verify that $\cord{\kappa_C}{\ell} < \cord{\kappa_A}{\ell}$. Since $\cord{\kappa_A}{\ell} = 1$ we need to show that $\cord{\kappa_C}{\ell} = 0$. $(B,C)$ is an $(\ell-1)$-pacing pair hence $V_\ell$ is a subword of $U_\ell$ with $\lp{V_{\ell}} \geq \lp{U_{\ell}} - 1$. Now, $B$ was constructed from $A$ by fixing $A$ at location $\ell$ so $U_{\ell}$ is an element of $\R$ and thus $\lp{U_{\ell}} \geq 3$. This shows that $\lp{V_{\ell}} \geq 2$. Consequently, using Observation \ref{obs:twoPiecUniq}, $U_\ell$ is the only element in $\R$ such that $V_\ell$ is its subword. This completes the argument since by part \ref{rem:AfterFixEqAndFT:cmp} of Remark \ref{rem:AfterFixEqAndFT} we have that $\lp{U_\ell} < \lp{\Comp{U_\ell}}$ so necessarily $\cord{\kappa_C}{\ell} = 0$.

\end{description}
\end{proof}

\end{document}